\newcommand{\zz}{\ensuremath{\mathbb{Z}}}
\newcommand{\qq}{\ensuremath{\mathbb{Q}}}
\newcommand{\nn}{\ensuremath{\mathbb{N}}}
\theoremstyle{definition}
\newmdtheoremenv{frm-thm}{Theorem}
\newmdtheoremenv{frm-def}{Definition}
\newmdtheoremenv{frm-lem}{Lemma}
\newtheorem{definition}{Definition}
\newtheorem{example}{Example}
\newtheorem{remark}{Remark}
\newtheorem{proof techniques}{Proof Techniques}
\newtheorem{lemma}{Lemma}
\newtheorem{corollary}{Corollary}
\newtheorem{proposition}{Proposition}
\newtheorem*{theorem*}{Theorem}
\newtheorem*{exercise*}{Exercise}
\newtheorem*{solution*}{Solution}
\newtheorem{lettertheorem}{Theorem}
\begin{document}

\title{Bruhat Intervals in the Infinite Symmetric Group are Cohen-Macaulay}
\author{Nathaniel Gallup and Leo Gray} 
\date{\today}

\begin{abstract}
    We show that the (non-Noetherian) Stanley-Reisner ring of the order complex of certain intervals in the Bruhat order on the infinite symmetric group $S_\infty$ of all auto-bijections of $\mathbb{N}$ is Cohen-Macaulay in the sense of ideals and weak Bourbaki unmixed. This gives an infinite-dimensional version of results due to Edelman, Bj\"{o}rner, and Kind and Kleinschmidt for finite symmetric groups $S_n$. 
\end{abstract}

\maketitle


\section{Introduction}

In commutative algebra, there has been much interest in developing combinatorial characterizations of the Cohen-Macaulay condition, particularly in the case of the Stanley-Reisner ring $k[\Delta]$ of a simplicial complex $\Delta$. For example, in their 1979 paper \cite{kind_schalbare_1979}, Kind and Kleinschmidt proved that if a finite pure simplicial complex $\Delta$ is shellable, then $k[\Delta]$ is Cohen-Macaulay, and in his 1972 work \cite{hochster1972}, Hochster showed that if such a $\Delta$ is constructible, then again $k[\Delta]$ is Cohen-Macaulay. An especially nice class of finite pure simplicial complexes are the order complexes of graded posets, so one might expect that the shellability criterion appears in a nice form in this context. Indeed, in his 1980 paper \cite{bjorner1980shellable}, Björner defined the notion of a lexicographically shellable poset and proved that their order complexes are shellable. In 1981, this was used by Edelman who proved in \cite{edelman1981bruhat} that the Bruhat order on (any interval in) $S_n$ is lexicographically shellable, thereby generating more combinatorial examples of Cohen-Macaulay rings.

More recently, the Cohen-Macaulay notion has been extended to non-Noetherian rings (see \cite{glaz1992coherence}, \cite{glaz1995homological}, \cite{hamilton2007non}, and \cite{asgharzadeh2009notion}, the latter giving a nice survey). While there are many equivalent definitions of a Cohen-Macaulay ring in the Noetherian setting, it turns out that these definitions may no longer be the same for non-Noetherian rings. This gives rise to many different notions of Cohen-Macaulayness in the non-Noetherian setting, the strongest among them being \emph{Cohen-Macaulay in the sense of ideals} and \emph{Weak Bourbaki unmixed} (see \cite{asgharzadeh2009notion} for an explanation of the connections between the various definitions). One method of obtaining non-Noetherian Cohen-Macaulay rings was given by Asgharzadeh, Dorreh, and Tousi who showed in \cite[Corollary 3.8]{asgharzadeh2014direct} that any flat direct limit of Noetherian Cohen-Macaulay rings is both Cohen-Macaulay in the sense of ideals and weak Bourbaki unmixed. We call such rings \emph{Cohen-Macaulay in the sense of flat direct limits}.

In \cite{chlopecki2026antidiagonal}, Chlopecki, Meintjes, and the first author proved that if $k$ is an algebraically closed field, $\Delta$ is a (potentially infinite-dimensional) simplicial complex, and $\Delta_1 \subseteq \Delta_2 \subseteq \ldots \subseteq \Delta$ is an increasing sequence of finite full Cohen-Macaulay subcomplexes whose union is $\Delta$, then $k[\Delta]$ is a flat direct limit of the Stanley-Reisner rings $k[\Delta_n]$, and is therefore Cohen-Macaulay in the sense of flat direct limits. 

In this paper, we mimic Bj\"{o}rner and Kind and Kleinschmidt's  ``lexicographically shellable implies shellable implies Cohen-Macaulay'' pipeline in the setting of an infinite poset. We introduce the notion of \emph{lexicographically nested-shellablility} for an infinite poset and \emph{nested-shellability} for an infinite simplicial complex, and we prove that as in the finite case, the former implies the latter, which implies via \cite[Theorem A]{chlopecki2026antidiagonal} Cohen-Macaulayness in the sense of flat direct limits. This is recorded in the following theorems. (We fix $k$ to be an algebraically closed field throughout the rest of the paper.)   

\begin{lettertheorem}\label{thm: main thm nested implies CM}
   If $\Delta$ is a nested-shellable simplicial complex, its Stanley-Reisner ring $k[\Delta]$ is Cohen-Macaulay in the sense of flat directed limits. 
\end{lettertheorem}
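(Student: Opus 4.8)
The plan is to reduce Theorem~\ref{thm: main thm nested implies CM} to the classical finite shellability criterion of Kind and Kleinschmidt together with \cite[Theorem A]{chlopecki2026antidiagonal}. First I would unwind the definition of nested-shellability. By construction, a nested-shellable complex $\Delta$ should come equipped with an exhausting chain $\Delta_1 \subseteq \Delta_2 \subseteq \cdots \subseteq \Delta$ of finite subcomplexes with $\bigcup_n \Delta_n = \Delta$, where each $\Delta_n$ is a \emph{full} (i.e.\ induced) subcomplex of $\Delta$ on a finite vertex set, and each $\Delta_n$ carries a (pure) shelling compatible with that of $\Delta_{n+1}$ --- this compatibility being precisely what the adjective ``nested'' encodes. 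The first step is therefore to extract this data and record that each $\Delta_n$ is a finite pure shellable simplicial complex in the usual sense, and that it is full in $\Delta$.

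Second, I would invoke the theorem of Kind and Kleinschmidt \cite{kind_schalbare_1979}: a finite pure shellable simplicial complex has a Cohen-Macaulay Stanley-Reisner ring. Applying this to each $\Delta_n$ shows that $k[\Delta_n]$ is Cohen-Macaulay for every $n$, so $\{\Delta_n\}_{n \geq 1}$ is an increasing sequence of finite full Cohen-Macaulay subcomplexes of $\Delta$ whose union is $\Delta$.

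Third, I would feed this sequence into \cite[Theorem A]{chlopecki2026antidiagonal}: since $k$ is algebraically closed, that result yields that $k[\Delta] \cong \varinjlim_n k[\Delta_n]$ is a flat direct limit of the Noetherian Cohen-Macaulay rings $k[\Delta_n]$, hence Cohen-Macaulay in the sense of flat direct limits (equivalently, both Cohen-Macaulay in the sense of ideals and weak Bourbaki unmixed, by \cite[Corollary 3.8]{asgharzadeh2014direct}). This is exactly the assertion of the theorem, so the proof is a composition of the two cited black boxes once the definition of nested-shellability has been unpacked.

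The main obstacle I expect is purely at the interface of the definitions: one must check that the finite subcomplexes produced by nested-shellability genuinely satisfy the two hypotheses required downstream --- \emph{fullness}, so that \cite[Theorem A]{chlopecki2026antidiagonal} applies, and \emph{purity} (of the correct uniform dimension), so that \cite{kind_schalbare_1979} applies. If the definition is phrased so that the $\Delta_n$ are automatically full and pure, the argument is essentially a one-line chain of implications; if it is stated more loosely, a short lemma is needed to pass to a cofinal subchain having these properties and to verify that shellability and fullness are inherited under this replacement. A secondary, minor point is matching ``shellable'' for the pieces with the pure notion used in \cite{kind_schalbare_1979}; the definition of nested-shellability should be arranged so that no mismatch arises, and I would confirm this explicitly before citing Kind--Kleinschmidt.
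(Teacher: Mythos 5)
Your proposal matches the paper's proof exactly: each $\Delta_n$ in the nested-shellable exhaustion is a finite, full, pure, shellable subcomplex, so Kind--Kleinschmidt gives $k[\Delta_n]$ Cohen--Macaulay, and \cite[Theorem A]{chlopecki2026antidiagonal} then yields that $k[\Delta]$ is Cohen--Macaulay in the sense of flat direct limits. One small clarification on your reading of the definition: ``nested'' refers only to the increasing chain $\Delta_1 \subseteq \Delta_2 \subseteq \cdots$ of finite full subcomplexes, and carries no requirement that the shelling orders of $\Delta_n$ and $\Delta_{n+1}$ be compatible --- each $\Delta_n$ just needs to admit some shelling on its own, and fullness and purity are built into the definition (the paper's notion of ``shellable,'' following \cite[5.1.11]{bruns1998cohen}, already presupposes purity), so no cofinal-subchain lemma is needed.
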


\begin{lettertheorem}\label{thm: main thm nested-lex implies nested}
    If $P$ is a lexicographically nested-shellable poset, then the order complex $\Delta(P)$ is nested-shellable. 
\end{lettertheorem}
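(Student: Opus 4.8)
The plan is to transplant Bj\"orner's classical argument --- that a finite bounded poset carrying an EL-labeling has a shellable order complex, shelled by the lexicographic order on its maximal chains \cite{bjorner1980shellable} --- to the infinite setting, applying it level by level along the nested exhaustion.

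First I would unwind the hypothesis and translate it into a statement about order complexes. A lexicographically nested-shellable poset $P$ should carry an exhaustion $P=\bigcup_{n\geq 1}P_n$ by finite convex subposets $P_1\subseteq P_2\subseteq\cdots$, each bounded, together with EL-labelings $\lambda_n$ of the successive $P_n$ that are compatible under the inclusions (a covering relation of $P_n$ remaining one of $P_{n+1}$ by convexity). Since each $P_n$ is an induced subposet of $P$, a finite chain of $P$ lying in $P_n$ is automatically a chain of $P_n$, so $\Delta(P_n)$ is exactly the full subcomplex of $\Delta(P)$ on vertex set $P_n$; and every face of $\Delta(P)$ is a finite chain, hence contained in some $P_n$ by directedness of the exhaustion, so $\Delta(P)=\bigcup_n\Delta(P_n)$ with the $\Delta(P_n)$ increasing.

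Next I would invoke the finite case at each level. A finite bounded poset admitting an EL-labeling is graded, so $\Delta(P_n)$ is pure; and by Bj\"orner's theorem the order complex of the proper part of $P_n$ (with minimum and maximum deleted) is shellable, shelled by the lexicographic order on maximal chains induced by $\lambda_n$. As $\Delta(P_n)$ is obtained from that complex by coning twice (once over the minimum, once over the maximum of $P_n$), and coning preserves shellability, $\Delta(P_n)$ is itself shellable. Thus each $\Delta(P_n)$ is a finite, full, pure, shellable subcomplex of $\Delta(P)$, and these complexes exhaust $\Delta(P)$ --- which is exactly the data witnessing that $\Delta(P)$ is nested-shellable.

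The step I expect to require the most care is the bookkeeping that this level-by-level data genuinely assembles into the definition of a nested-shellable complex: concretely, that each $\Delta(P_n)$ is a \emph{full} subcomplex (this rests on convexity, used above) and that, if the definition demands the shellings at consecutive levels be compatible --- the shelling order of $\Delta(P_{n+1})$ restricting, on the facets it shares with $\Delta(P_n)$, to that of $\Delta(P_n)$ --- this follows because lexicographic comparison of two maximal chains reads only the word of $\lambda$-labels along them and is therefore unchanged from $P_n$ to $P_{n+1}$. The one delicate point, and the likeliest genuine obstacle, is that a maximal chain of $P_n$ need not remain maximal in $P_{n+1}$, so ``the shared facets'' must be interpreted correctly; I expect this to be handled cleanly by the design of the definitions (for instance by building the $P_n$ so they share a global minimum and maximum, as happens for Bruhat intervals, in which case convexity forces saturated chains to persist), after which the finite Bj\"orner theorem slots in with no further work.
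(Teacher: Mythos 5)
Your proposal is correct and takes essentially the same route as the paper's proof. The paper defines a lexicographically nested-shellable poset to be interval-finite and $\nn$-graded, carrying a single global edge-labeling $\lambda$ that restricts to an $L$-labeling on every finite interval (LNS1), together with a cofinal chain $\hat{0}<p_1<p_2<\cdots$ (LNS2); the exhaustion is then by the finite bounded intervals $P_n=[\hat{0},p_n]$, which are automatically convex and carry compatible labelings because they share the one $\lambda$. From there the paper does exactly what you do: each $\Delta([\hat{0},p_n])$ is a full finite subcomplex, these exhaust $\Delta(P)$ by cofinality, and each is shellable by Bj\"orner's theorem applied to the $L$-labeled graded poset $[\hat{0},p_n]$.

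The ``delicate point'' you flag --- whether the shellings at consecutive levels must restrict compatibly, and the fact that a maximal chain of $P_n$ stops being maximal in $P_{n+1}$ --- turns out to be a non-issue. The paper's notion of nested-shellable asks only for an increasing sequence of finite full subcomplexes, each shellable, whose union is $\Delta$; there is no coherence requirement between the shelling orders at different levels. So the facet bookkeeping you were bracing for is simply not part of the definition, and your argument goes through as written. Your double-coning remark for passing from $\Delta(\overline{P_n})$ to $\Delta(P_n)$ is also fine and matches what is implicitly being used when the paper cites Bj\"orner's Theorem 2.3.
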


We then apply these results to give an analog of Edelman's theorem (\cite{edelman1981bruhat}) for the infinite symmetric group $S_\infty$ of all bijections $\nn \to \nn$. More specifically, in \cite{gallup2021well} the first author introduced a version of the Bruhat order for $S_\infty$. Here we prove the following theorem that certain intervals $[\sigma , \omega]^f \subseteq S_\infty$ consisting of permutations between $\sigma$ and $\omega$ in Bruhat order which are of finite length above $\sigma$ are lexicographically nested-shellable, generating new examples of rings which are Cohen-Macaulay in the sense of ideals and weak Bourbaki unmixed. 

\begin{lettertheorem}\label{thm: main thm Bruhat order}
If $\sigma , \omega \in S_\infty$ and $\sigma <_\text{Bruhat} \omega$, then the poset $([\sigma , \omega]^f , <_\text{Bruhat})$ is lexicographically nested-shellable, and hence the Stanley-Reisner ring $k[\Delta([\sigma , \omega]^f)]$ of its order complex $\Delta([\sigma , \omega]^f)$ is Cohen-Macaulay in the sense of flat direct limits. 
\end{lettertheorem}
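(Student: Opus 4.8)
The plan is to realize $P := [\sigma,\omega]^f$ as an increasing union of finite Bruhat intervals in finite symmetric groups, each lexicographically shellable by Edelman's theorem and carrying mutually compatible EL-labelings; this is exactly the data required to conclude that $P$ is lexicographically nested-shellable, whereupon Theorem~\ref{thm: main thm nested-lex implies nested} and then Theorem~\ref{thm: main thm nested implies CM} (together with \cite{asgharzadeh2014direct}) finish the argument.

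First I would recall from \cite{gallup2021well} the local structure of the Bruhat order on $S_\infty$. Every $\pi \in P$ has $\ell(\pi) - \ell(\sigma) < \infty$, so (in the appropriate convention) $\sigma^{-1}\pi$ has finite support, $\pi$ agrees with $\sigma$ outside a finite window, and $\pi$ is reached from $\sigma$ by a chain of length-increasing transpositions; in particular $P$ is graded by $\pi \mapsto \ell(\pi) - \ell(\sigma)$ and every cover in $P$ is of the form $\pi \lessdot \pi t_{ab}$ for a transposition $t_{ab}$. For $n$ sufficiently large I would set $Q_n \subseteq P$ to be the subposet of elements whose window data relative to $\sigma$ is confined to the first $n$ coordinates, and then establish the following: $Q_n$ is finite; $Q_n$ is a full, order-convex subposet of $P$; $Q_n \subseteq Q_{n+1}$; $\bigcup_n Q_n = P$ (this uses precisely the finiteness built into the superscript $f$); and---the crucial point---that under a suitable $\sigma$-twisted embedding $S_n \hookrightarrow S_\infty$, the poset $Q_n$ is isomorphic to a genuine Bruhat interval $[\sigma_n,\omega_n] \subseteq S_n$, with the Bruhat order on $S_\infty$ restricting to the finite Bruhat order; here $\omega_n$ should be obtained by truncating the rank data of $\omega$ to the window. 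When $\sigma^{-1}\omega$ has finite support the system stabilizes and the theorem reduces immediately to Edelman; when it has infinite support the intervals $[\sigma_n,\omega_n]$ grow without bound and $P$ is genuinely infinite-dimensional (for instance when $\omega$ is an infinite product of disjoint transpositions above $\sigma$, in which case $P$ is an infinite Boolean lattice, a direct limit of the finite ones).

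I would then label each cover $\pi \lessdot \pi t_{ab}$ of $P$ by the pair $(a,b)$, equivalently by its position in a fixed reflection ordering of the transpositions, exactly as in Edelman's labeling \cite{edelman1981bruhat}. Because this label depends only on the transposition and not on the ambient window, the labeling of $Q_{n+1}$ restricts to that of $Q_n$. By \cite{edelman1981bruhat} (building on \cite{bjorner1980shellable}) each $Q_n \cong [\sigma_n,\omega_n]$ is lexicographically (EL-)shellable with respect to this labeling, so the exhausting family $\{Q_n\}$ together with its compatible labelings is a witness that $P = [\sigma,\omega]^f$ is lexicographically nested-shellable. Applying Theorem~\ref{thm: main thm nested-lex implies nested} then gives that $\Delta(P)$ is nested-shellable, and Theorem~\ref{thm: main thm nested implies CM} gives that $k[\Delta(P)]$ is Cohen-Macaulay in the sense of flat direct limits, hence Cohen-Macaulay in the sense of ideals and weak Bourbaki unmixed.

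The main obstacle will be the structural claim of the second paragraph: verifying that the finite windows $Q_n$ are honestly Bruhat intervals in finite symmetric groups and that they form a full, order-convex, nested exhaustion of $[\sigma,\omega]^f$. Concretely, one must check that when $\omega$ has infinite length above $\sigma$ the finitely supported elements below $\omega$ are still captured cofinally by finite intervals $[\sigma_n,\omega_n]$; that the grading and cover relations of $P$ match those of the ambient finite $S_n$; and that each $Q_n$ is "full" in the sense demanded by the definition of nested-shellability, so that Theorem~\ref{thm: main thm nested-lex implies nested} genuinely applies. Once this local-finiteness picture is in place, the shellability input is entirely classical and the rest of the argument is formal.
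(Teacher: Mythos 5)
Your approach is genuinely different from the paper's, and the paper has a reason for not taking it.

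You propose to flatten each finite window $Q_n$ of $[\sigma,\omega]^f$ onto a Bruhat interval $[\sigma_n,\omega_n]$ in a finite symmetric group and then invoke Edelman directly. The paper instead develops the Bruhat combinatorics of $S_\infty$ from scratch — it characterizes cover relations via Proposition~\ref{prop: cover relations in the bruhat order} and Corollary~\ref{cor: cover relations in S infinity are given by transpositions}, identifies the lex-smallest relative candidate $(d,m)$ in Lemmas~\ref{lem: d m is a relative candidate} and \ref{lem: d , m is lex first relative candidate}, builds the cofinal discrete-saturated chain in Proposition~\ref{prop: existence of a discrete-saturated chain}, and then re-runs the Edelman-style increasing/lexicographically-least chain argument directly in $S_\infty$. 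The paper explicitly says it must do this ``from scratch'' because the usual Coxeter machinery (reduced words, length, parabolic subgroups) is not yet available for $S_\infty$ --- which is precisely what your flattening would need to invoke.

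The main gap is exactly the one you flag, and it is substantive rather than a formality. You assert that each $Q_n$ (respectively each finite interval $[\mu,\nu]$) is order-isomorphic, under a $\sigma$-twisted relabeling, to a Bruhat interval $[\sigma_n,\omega_n] \subseteq S_n$. The order-isomorphism onto a \emph{lower set} of $S_n$ does go through (the relabeling is a monotone bijection of values, and positions are preserved), but you also need that this lower set has a unique maximum and equals the Bruhat interval it generates. For finite symmetric groups this is the parabolic-subgroup fact that $W_J \cap [e,w]$ is an interval (Bj\"orner--Brenti, Theorem 2.6.1), but you do not cite or prove it, you need to adapt it to the $\sigma$-shifted window (minimum is $\sigma$, not $e$), and you need to check that it still holds when $\omega$ lies in $S_\infty$ rather than in any finite $S_N$. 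Until this is established your proposal reduces the theorem to an unverified combinatorial claim. Two further items to tighten: (i) your labeling $\pi \lessdot \pi t_{ab} \mapsto (a,b)$ is right-multiplication, whereas Edelman labels by the transposition $t$ with $\pi' = t\pi$ (left-multiplication); these agree only up to the inversion symmetry $\pi \mapsto \pi^{-1}$, so ``exactly as in Edelman's labeling'' needs that duality spelled out. (ii) (LNS2) requires a single cofinal chain $\hat 0 < p_1 < p_2 < \cdots$, not merely an exhausting family of subposets; you get the chain for free once you know each $Q_n$ is an interval with unique top $p_n$ and that $p_n \leq p_{n+1}$, but this should be stated.

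If carried out carefully your route would be shorter than the paper's because it outsources the hard combinatorics to Edelman and the parabolic-restriction theorem; the cost is importing those results across the flattening isomorphism and extending them to the non-finitary $\omega \in S_\infty$, which is essentially the content the paper chooses to redo by hand. The paper's version is longer but self-contained and, as a by-product, establishes the cover-relation and graded-poset structure of $[\sigma,\omega]^f$ directly, which it uses elsewhere (for example to prove interval-finiteness in Corollary~\ref{cor: delta f in S infinity is interval-finite} and the $\nn$-grading in Proposition~\ref{prop: bruhat interval is graded}).
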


The layout of the paper is as follows. In Section \ref{sec: Nested-Shellable Simplicial Complexes} we define nested-shellable simplicial complexes and show that they are Cohen-Macaulay in the sense of flat direct limits. In Section \ref{sec: Lexicographically Nested-Shellable Posets} we extend the notion of a grading to infinite posets, use this ``$\nn$-grading'' to define lexicographically nested-shellable posets, and show that the latter definition implies nested-shellable. In the remainder of the paper we apply these results to the intervals $[\sigma , \omega]^f$ in $S_\infty$. While a surprising amount of the combinatorics of the Bruhat order on $S_n$ remains true in $S_\infty$, the proof methods are often different, since in the infinite setting we do not yet have access to many of the tools of Coxeter groups. Hence we develop much of this combinatorial theory from scratch along the way. Specifically, in Section \ref{sec: Bruhat Order on the Infinite Symmetric Group} we recall the definition of the infinite Bruhat order and give two equivalent characterizations of it. It turns out that like in $S_n$, all cover relations in $S_\infty$ are given by composing with transpositions, and in Section \ref{sec: Cover Relations in the Bruhat Order} we describe exactly when $\sigma \circ (p , q)$ covers $\sigma$ in the infinite Bruhat order. We also give a similar description of exactly when $\sigma <_\text{Bruhat} \sigma \circ (p , q) \leq_\text{Bruhat} \omega$ in the relative situation that $\sigma <_\text{Bruhat} \omega$. In Section \ref{sec: discrete-saturated Chains in the Bruhat Order} we use this relative description of cover relations to show that for any $\sigma <_\text{Bruhat} \omega$ there exists a discrete-saturated chain (whose cover relations are given by transpositions of a particularly nice form) beginning at $\sigma$ and either stopping at $\omega$ or converging to $\omega$ in a natural topology. As a result we obtain a full characterization of cover relations in $S_\infty$. In Section \ref{sec: An N Grading of S infty} we show that $[\sigma , \omega]^f$ is $\nn$-graded with rank function a relative version of Coxeter length. Finally, in Section \ref{sec: Lexicographically Nested-Shellable Intervals of S infty} we show that the intervals $[\sigma , \omega]^f$ are lexicographically nested-shellable, obtaining our proof of Theorem \ref{thm: main thm Bruhat order}.  


\section{Nested-Shellable Simplicial Complexes}\label{sec: Nested-Shellable Simplicial Complexes}

In this short section we review the notion of a (not necessarily finite) simplicial complex, define nested-shellable simplicial complexes, and show they are Cohen-Macaulay in the sense of flat direct limits. 

A \emph{simplicial complex} on a countable set $V$ (called the \emph{vertex set}) is a collection $\Delta$ of \textbf{finite} subsets of $V$ (called \emph{faces}) with the property that if $A \in \Delta$ and $B \subseteq A$ then $B \in \Delta$. $\Delta$ is a \emph{finite simplicial complex} if it is a finite set. A face $F$ of $\Delta$ is called a \emph{facet} if it is maximal with respect to inclusion among the set of faces. A finite-dimensional simplicial complex $\Delta$ is called \emph{pure} if all of its facets have the same size. Note that if $V$ is finite, then every face is contained in a facet, but if $\Delta$ is infinite, this is no longer true. Indeed, it may be that $\Delta$ has no facets. 

Given a finite subset $A \subseteq V$, define $\textbf{x}^A = \prod_{v \in A} x_v \in k[x_v \mid v \in V]$. Note that this is a square-free monomial. If $\Delta$ is a simplicial complex with vertex set $V$, define $I_{\Delta, V}$ to be the ideal of $k[x_v \mid v \in V]$ generated by the set of square-free monomials $\{ \textbf{x}^A \mid A \subseteq V, A \text{ is finite}, A \notin \Delta \}$. When $V$ is finite, Reisner showed thar the map $\Delta \mapsto I_{\Delta, V}$ is a bijection between the set of simplicial complexes on $V$ and the set of square-free monomial ideals in $k[x_v \mid v \in V ]$ (see \cite{reisner1976cohen}). In fact because we have required the faces of an infinite simplicial complex to be finite, it is still true that even when $V$ is infinite, this map is a bijection (see \cite[Proposition 2]{chlopecki2026antidiagonal} and \cite[Remark 1]{chlopecki2026antidiagonal}). For more background on (not necessarily finite) simplicial complexes and positively graded Cohen-Macaulay $k$-algebras, see \cite{chlopecki2026antidiagonal}. 

Recall (see \cite[5.1.11]{bruns1998cohen}) that a finite pure simplicial complex $\Delta$ is called \emph{shellable} if the facets of $\Delta$ can be given a linear order $F_1, \ldots, F_m$ so that for all $2 \leq i \leq m$, we have that $\left\langle F_i\right\rangle \cap\left\langle F_1, \ldots, F_{i-1}\right\rangle$ is generated by a non-empty set of maximal proper faces of $\left\langle F_i\right\rangle$. Kind and Kleinschmidt showed that the Stanley-Reisner ring $k[\Delta]$ of a finite pure shellable simplicial complex $\Delta$ is Cohen-Macaulay (\cite{kind_schalbare_1979}). In the following definition we give an analog of shellability for infinite simplicial complexes, which we then show implies Cohen-Macaulay in the sense of flat direct limits (Theorem \ref{thm: main thm nested implies CM}).

\begin{definition}
    We say that a simplicial complex $\Delta$ is \emph{nested-shellable} if there exists an increasing sequence $\Delta_1 \subseteq \Delta_2 \subseteq \ldots$ of finite full subcomplexes of $\Delta$ such that (NS1) each $\Delta_i$ is shellable and such that (NS2) $\Delta = \bigcup_{n \in \nn} \Delta_n$. 
\end{definition}

\begin{proof}[Proof of Theorem \ref{thm: main thm nested implies CM}]
    By Kind and Kleinschmidt's result (\cite{kind_schalbare_1979}), each face ring $k[\Delta_n]$ is Cohen-Macaulay, hence the desired result follows from \cite[Theorem A]{chlopecki2026antidiagonal}. 
\end{proof}


\section{Lexicographically Nested-Shellable Posets}\label{sec: Lexicographically Nested-Shellable Posets}

We begin this section by reviewing some necessary background on posets and gradings of finite posets. We then extend the notion of grading to infinite posets, define the lexicographically nested-shellable condition, and show that it implies the nested-shellable condition of the previous section. 

Given elements $p , q \in P$, the \emph{interval from $p$ to $q$} is defined to be the set $[p , q]_P = \{ p' \in P \mid p \leq p' \leq q\}$. We will drop the subscript ``$P$'' if it is clear from context. For $p , q \in P$, recall that $q$ is said to \emph{cover} $p$ if $p < q$ and there does not exist $p' \in P$ such that $p < p' < q$, i.e. if $[p , q]$ consists of exactly two elements. We denote by $C(P) = \{ ( p , q) \mid p \lessdot q \}$ the set of cover relations of $P$. A \emph{chain} in a poset $P$ is a subset $C \subseteq P$ which is totally ordered, and a chain is \emph{maximal} if it is not properly contained in another chain. Furthermore, a chain $C$ in $P$ is called \emph{saturated} if it has the property that whenever $p , q \in C$ and $p' \in P$ is such that $p < p' < q$, then $p' \in C$ as well. A finite or infinite chain in $P$ is called \emph{discrete-saturated} if it is saturated and of the form $p_0 < p_1 < \ldots$, i.e. is finite or is order-isomorphic to $\nn$. Note that a chain of this form is discrete-saturated if and only if $p_n \lessdot p_{n + 1}$ for all $n$. In this case we write $C: p_0 \lessdot p_1 \lessdot \ldots$ to denote that $C$ is the discrete-saturated chain consisting of the elements $p_i$. 

\begin{remark}
    In a finite poset, maximal chains are always saturated but of course the converse fails. However in a finite poset a saturated chain is maximal if and only if it contains a minimal and maximal element. The situation is more complicated for infinite posets. For example $\qq$ as a subset of itself with the usual order is both a maximal and saturated chain, but it contains neither a minimal nor maximal element, and it is not discrete-saturated because it has the wrong order-type. In this paper we shall only be concerned with discrete-saturated chains, and we discuss some partial results about them in Lemma \ref{lem: discrete-saturated chains are maximal in graded posets}. 
\end{remark} 

Following \cite{bjorner1980shellable}, an \emph{edge labeling} of a poset $P$ with values in a poset $Q$ is a function $\lambda: C(P) \to Q$. The \emph{Jordan-H\"{o}lder sequence} of a finite or infinite discrete-saturated chain $C: p_0 \lessdot p_1 \lessdot \ldots$ in $P$ with respect to $\lambda$ is the ordered list $J_\lambda(C) = (\lambda(p_0, p_1) , \lambda(p_1, p_2) , \ldots )$ with entries in $Q$. A discrete-saturated chain $p_0 \lessdot p_1 \lessdot \ldots$ is $\lambda$-\emph{increasing} if $\lambda(p_0 , p_1) < \lambda(p_1, p_2) < \ldots$ is an increasing chain in $Q$. 

A \emph{graded poset} is one which is finite, bounded (has unique minimal and maximal elements), and pure (every maximal chain has the same size and hence the poset admits a rank function). Graded posets have the Jordan-Dedekind property that for any pair of elements $p \leq p'$, every saturated chain starting at $p$ and ending at $p'$ has the same length. 

If $\lambda$ is an edge-labeling of a graded poset $P$ then it is called an \emph{$L$-labeling} if (L1) for all $p < q$ there exists a unique $\lambda$-increasing maximal chain $C$ in $[p , q]$, and (L2) the Jordan-H\"{o}lder sequence of $C$ lexicographically precedes that of any other maximal chain in $[p , q]$. (Note that to compare two Jordan-H\"{o}lder sequences in the lexicographical ordering it is necessary that they have the same length, which is guaranteed by the graded condition.) A graded poset is called \emph{lexicographically shellable} if it has an $L$-labeling. 

The \emph{order complex} of a poset $P$ is the simplicial complex $\Delta(P)$ with vertex set $P$ whose faces are the finite chains in $P$. Note that the squarefree monomial ideal $I_{\Delta(P) , P} \subseteq k[x_p \mid p \in P]$ corresponding to $\Delta(P)$ under the Stanley-Reisner bijection is generated by the monomials of the form $x_p x_q$ where $p$ and $q$ are not related in $P$. 

Bj\"{o}rner proved that the order complexes of lexicographically shellable posets are shellable (\cite{bjorner1980shellable}) and we shall prove a similar statement for infinite posets, but we first need analogs of these definitions that work in the infinite setting. We begin by defining a notion of grading for infinite posets that will guarantee the Jordan-Dedekind property for all finite intervals. 

\begin{definition}\label{def: graded}
    A poset $P$ is called $\nn$-\emph{graded} if the following conditions are satisfied. 
\begin{itemize}
    \item[(G1)] $P$ has a unique minimal element (which we will denote by $\hat{0}$).
    \item[(G2)] For every $p \in P$, there exists a finite saturated chain starting at $\hat{0}$ and ending at $p$. 
    \item[(G3)] For every $p \in P$, every maximal chain in $[\hat{0} , p]$ has the same length (which must be finite by (G2)).  
\end{itemize}
We will denote this length by $\rho(p)$ and call $\rho: P \to \zz_{\geq 0}$ the \emph{rank function} of $P$.
\end{definition} 

\begin{remark}
In Definition \ref{def: graded} we have not required $P$ to have a maximal element since our main posets of interest $[\sigma , \omega]^f$ do not have maximal elements, as we show in Corollary \ref{cor: no max element for intervals either}. In fact it is also true that entire poset $S_\infty$ does not have a maximal element (Corollary \ref{cor: no maximal elements}). 
\end{remark}

\begin{remark}
    If $P$ is finite and $\nn$-graded in the sense of Definition \ref{def: graded}, then $P$ may not be graded in the traditional sense of Bj\"{o}rner since it might not have a maximal element, and still may not be graded even upon adding in such an element since there may be maximal elements of $P$ that have different ranks. However it is always the union of finitely many graded posets, with length functions that agree with $\rho$ where they are mutually defined.
\end{remark} 

We actually want guarantee that all intervals in our infinite posets are graded, but this property necessitates requiring said finiteness directly.

\begin{definition} 
We say that a poset $P$ is \emph{interval-finite} if for any $p , p' \in P$ with $p \leq p'$, the interval $[p , p'] = \{ q \in P \mid p \leq q \leq p' \}$ is finite.
\end{definition}

It turns out that an $\nn$-graded poset need not be interval-finite as the following example shows.  

\begin{example}
   Let $P = \{ (0 , 0), (0 , 2) , (n , 1) \mid n \in \zz \} \subseteq \zz^2$ be a poset with the product order (i.e. $(p , q) < (p' , q')$ if and only if $p < p'$ and $q < q'$). Then $P$ is clearly an $\nn$-graded poset with rank function $\rho(p , q) = q$, but is not interval-finite, since $P = [(0 , 0) , (0 , 2)]$ which is infinite. 
\end{example}

However it is true that an $\nn$-graded poset which is interval-finite has the desired property that any interval is graded. 

\begin{lemma}\label{lem: N-graded plus interval-finite means locally graded}
    If $P$ is an $\nn$-graded poset which is interval-finite, then for any $p, q \in P$ with $p < q$ we have that $[p , q]$ is graded.
\end{lemma}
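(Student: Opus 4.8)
The plan is to verify directly the three defining properties of a graded poset (in Bj\"orner's sense) for the finite interval $[p,q]$: that it is finite, bounded, and pure. Finiteness is immediate, since $P$ is assumed interval-finite. Boundedness is also immediate: $p$ is the unique minimal element and $q$ the unique maximal element of $[p,q]$. So the entire content of the lemma is purity --- that every maximal chain in $[p,q]$ has the same length, equivalently that $[p,q]$ admits a rank function.

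The key step I would isolate is an observation about the rank function $\rho$ of $P$: if $r \lessdot r'$ is a cover relation in $P$, then $\rho(r') = \rho(r) + 1$. To prove it, use (G2) to choose a finite saturated chain $\hat 0 = s_0 \lessdot s_1 \lessdot \cdots \lessdot s_m = r$. This chain is in fact a \emph{maximal} chain of the interval $[\hat 0 , r]$ (it contains both the bottom $\hat 0$ and the top $r$, and no element can be inserted between consecutive terms without destroying saturation), so by (G3) its length is $m = \rho(r)$. Appending $r'$ produces a saturated chain from $\hat 0$ to $r'$, hence a maximal chain of $[\hat 0 , r']$, whose length $m+1$ must equal $\rho(r')$ by (G3) applied to $r'$. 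Thus $\rho(r') = \rho(r)+1$.

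With this in hand I would finish as follows. Let $C$ be any maximal chain in $[p,q]$. Since $[p,q]$ is finite and $p,q$ are comparable to every element of $[p,q]$, the chain $C$ must contain $p$ and $q$ and take the form $p = r_0 \lessdot r_1 \lessdot \cdots \lessdot r_k = q$, where each relation is a cover relation \emph{in $[p,q]$} (otherwise an intermediate element could be inserted, contradicting maximality). Each $r_i \lessdot r_{i+1}$ is then also a cover relation in $P$, because $[p,q]$ is order-convex in $P$: if $r_i < s < r_{i+1}$ held in $P$, then $p \le r_i < s < r_{i+1} \le q$ would force $s \in [p,q]$. Applying the key step to each of these covers gives $\rho(r_{i+1}) = \rho(r_i)+1$, and telescoping yields $k = \rho(q) - \rho(p)$. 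Since the right-hand side is independent of $C$, all maximal chains of $[p,q]$ have the same length, so $[p,q]$ is pure with rank function $r \mapsto \rho(r) - \rho(p)$, and hence graded.

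I do not expect a real obstacle here; the only points needing a word of care are the two identifications ``a saturated chain from the bottom to a given element is a maximal chain of that interval'' and the order-convexity of $[p,q]$ in $P$, both of which I would spell out as above. The interval-finiteness hypothesis enters only to guarantee that $[p,q]$ is finite in the first place, and the example immediately preceding the lemma shows this hypothesis cannot be omitted.
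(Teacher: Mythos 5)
Your proof is correct, and at the core it rests on the same idea as the paper's proof: use (G2) to extend a chain inside $[p,q]$ down to $\hat 0$ and then invoke (G3) on the resulting maximal chain(s). The organization is different, though. The paper's proof concatenates a single saturated chain $C$ from $\hat 0$ to $p$ with two arbitrary maximal chains $D, D'$ of $[p,q]$, obtains two maximal chains of $[\hat 0, q]$, and applies (G3) once to conclude $|D| = |D'|$. You instead isolate a preliminary lemma — that $\rho(r') = \rho(r) + 1$ whenever $r \lessdot r'$ in $P$ — prove it by applying (G3) to the two intervals $[\hat 0, r]$ and $[\hat 0, r']$, and then telescope along an arbitrary maximal chain of $[p,q]$, applying (G3) at every step. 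Your route is somewhat longer but has the merit of producing the explicit rank function $r \mapsto \rho(r) - \rho(p)$ on $[p,q]$, and of spelling out the order-convexity of $[p,q]$ in $P$ (covers in $[p,q]$ are covers in $P$), a point the paper's one-sentence concatenation argument leaves implicit. Both arguments are sound; the paper's is tighter, yours is more self-contained.
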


\begin{proof}
By the interval-finite hypothesis, $[p , q]$ is a finite poset, which clearly has a unique minimal element $p$ and a unique maximal element $q$. By (G2) there exists a finite saturated chain $C$ starting at $\hat{0}$ and ending at $p$. Concatenating any two maximal chains $D$ and $D'$ of $[p , q]$ with $C$ yield two maximal chains of $[\hat{0} , q]$ which must have the same length by (G3). Hence $D$ and $D'$ must also have the same length, so $[p , q]$ is pure.
\end{proof}

We can now give our analog of the lexicogrpahically shellable condition for infinite posets and then prove Theorem \ref{thm: main thm nested-lex implies nested} that lexicographically nested-shellable implies nested-shellable, which is the infinite analog of Bj\"{o}rner's aforementioned theorem.  

\begin{definition}
    We say that a poset $P$ is \emph{lexicographically nested-shellable} if it is interval-finite, $\nn$-graded, and the following conditions are satisfied. 
    
    \begin{enumerate}
        \item[(LNS1)] There exists an edge-labeling $\lambda$ of $P$ such that for each $p \leq q$, $\lambda$ is an $L$-labeling for the poset $[p , q]$ (which is necessarily graded by Lemma \ref{lem: N-graded plus interval-finite means locally graded}). 
        \item[(LNS2)] There exists a chain $\hat{0} < p_1 < \ldots$ such that for all $p \in P$ there exists some $n \in \nn$ such that $p \leq p_n$. 
    \end{enumerate} 
\end{definition}

\begin{proof}[Proof of Theorem \ref{thm: main thm nested-lex implies nested}]
    First of all $P$ is interval-finite, so $[\hat{0} , p_n]$ is finite for all $n$. Next note that if $p , q \in P$ then $\Delta([p , q])$ is a full subcomplex of $\Delta(P)$. Indeed if $q_1 < \ldots < q_n$ is a chain in $[p , q]$ then certainly it is a chain in $P$, and if $q_1 < \ldots < q_n$ is a chain in $P$ such that $q_i \in [p , q]$ for each $i$ then it is a chain in $[p , q]$ as well.

    Therefore we have an increasing sequence $\Delta([\hat{0} , p_1]) \subseteq \Delta([\hat{0} , p_2]) \subseteq \ldots$ of finite full subcomplexes of $\Delta(P)$. By Lemma \ref{lem: N-graded plus interval-finite means locally graded}, for each $i \in \nn$, $[\hat{0} , p_i]$ is a graded poset which is lexicographically shellable by (LNS1), hence by \cite[Theorem 2.3]{bjorner1980shellable}, $\Delta([\hat{0} , p_i])$ is shellable so (NS1) is satisfied. Finally by (LNS2), given any $p \in P$, there exists some $n \in \nn$ such that $p \leq p_n$, hence $\{p\} \in \Delta([\hat{0} , p_n])$. Therefore given a face $F$ of $\Delta(P)$, there exists some $n \in \nn$ such that all vertices of $F$ are vertices in $\Delta([\hat{0} , p_n])$. Because $\Delta([\hat{0} , p_n])$ is a full subcomplex of $\Delta(P)$, it follows that $F \in \Delta([\hat{0} , p_n])$, so indeed we have $\bigcup_{n \in \nn} \Delta([\hat{0} , p_n]) = \Delta(P)$, and (NS2) is satisfied as well. 
\end{proof}

The remainder of the paper is dedicated to applying these definitions and results to the poset $S_\infty$ with the Bruhat order. 


\section{Bruhat Order on the Infinite Symmetric Group}\label{sec: Bruhat Order on the Infinite Symmetric Group}

In this section we define the Bruhat order on the infinite symmetric group and give two equivalent formulations of it that will be useful. We denote by $S_\infty$ the group of all bijections $\mathbb{N} \to \mathbb{N}$ (the group law being composition, the identity denoted by $e$) and by $S_\infty^f$ the subgroup of those bijections which fix all but finitely many elements of $\mathbb{N}$. There is an inclusion of $S_n$ into $S_{n + 1}$ by fixing $n + 1$, so we obtain a nested chain of subgroups $S_1 \subseteq S_2 \subseteq S_3 \subseteq \ldots \subseteq S_\infty$ whose union is $S_\infty^f$. Note that $S_\infty$ contains many elements that are not in $S_\infty^f$. Indeed the latter is countable while the former is not. Given $\omega \in S_\infty$, the \emph{one line notation} of $\omega$ is the ordered list $[\omega(1), \omega(2), \ldots]$. 

\begin{example}\label{ex: first perm}
    The permutation $\rho \in S_\infty$ which sends all odd $n$ to $n + 2$, all even $n$ with $n \neq 2$ to $n - 2$, and $2$ to $1$ has one-line notation $[3, 1, 5, 2, 7, 4, 9, 6, 11 , \ldots ]$. Note that $\rho$ is not an element of $S_\infty^f$. 
\end{example}

A \emph{transposition} in $S_\infty$ is a permutation which swaps two natural numbers and leaves all others fixed. We usually use the \emph{cycle notation} $(p , q)$ to denote the transposition which sends $p \mapsto q$ and $q \mapsto p$ and fixes all other numbers. Throughout this paper whenever we write a transposition in the cycle notation $(p , q)$ we will always assume that $p < q$.

In \cite[Section 9]{gallup2021well}, the second author described a topology on $S_\infty$ in which a sequence $\sigma_1, \sigma_2, \ldots$ of permutations converges to a permutation $\sigma$ if and only if for all $m \in \nn$ there exists some $N \in \nn$ such that if $n \geq N$ then $\sigma_n(m) = \sigma(m)$. In other words this sequence converges if and only if for all $m \in \nn$ the sequence $\sigma_1(m), \sigma_2(m), \ldots$ is eventually equal to $\sigma(m)$. 

\begin{example}\label{ex: two infinite permutations}
    Consider the permutation $\theta$ which maps all odd $n$ to $n + 1$ and all even $n$ to $n - 1$. The one-line notation of $\theta$ is $[2,1,4,3,6,5,8,7, \ldots]$, and again notice that $\theta \notin S_\infty^f$. Note that the sequence $(1 , 2), (1 , 2) \circ (3 , 4) , (1 , 2) \circ (3, 4) \circ (5 , 6) , \ldots$ of permutations in $S_\infty^f$ converges to $\theta$, as illustrated below in one-line notation. 
    \begin{align*}
    e &=[1,2,3,4,5,6,7,8 \ldots]
    \\ (1,2) &= [2,1,3,4,5,6,7,8 \ldots]
    \\(1,2)\circ (3,4)&= [2,1,4,3,5,6,7,8 \ldots]
    \\(1,2)\circ (3,4) \circ (5,6) &=[2,1,4,3,6,5,7,8 \ldots ]
    \\& \hspace{18.5 mm} \vdots
    \\& \hspace{17 mm} \downarrow
    \\\hspace{1 mm} \theta & = [2,1,4,3,6,5,8,7 \ldots]
    \end{align*}
\end{example}

The Bruhat order is a partial order on $S_n$ which is a combinatorial description of the containment relations among Schubert varieties in the flag variety. It was first studied in this geometric context by Ehresmann (\cite{ehresmann1934topologie}) and Chevalley (\cite{chevalley1994decompositions}), and then in the context of algebraic combinatorics by Verma (\cite{verma1966structure}). The Bruhat order has several equivalent characterizations (see \cite{bjorner2005combinatorics} for details as well as a development of the relevant combinatorics). 

In \cite{gallup2021well}, the second author generalized the Bruhat order to one on $S_\infty$ which we call the \emph{infinite Bruhat order} and now describe. Suppose that $X$ and $Y$ are subsets of $\nn$ both of cardinality $n$, and that the elements of $X$ are denoted by $x_i$ for $1 \leq i \leq n$ such that $x_1 < x_2 < \ldots < x_n$ and similarly for $Y$. Then we say that $X \leq Y$ if and only if $x_i \leq y_i$ for all $i$. This defines a partial order on the set of subsets of $\nn$ with cardinality $n$. Using this, we define the \emph{infinite Bruhat order} on $S_\infty$, denoted by $\leq_\text{Bruhat}$, by setting $\sigma \leq_\text{Bruhat} \omega$ if and only if for every $n \in \nn$, we have that $\{ \sigma(1) , \ldots, \sigma(n) \} \leq \{ \omega(1), \ldots, \omega(n) \}$. 

\begin{remark}
    If we restrict the infinite Bruhat order to $S_n$ (which we identify with the subgroup of $S_\infty$ which fixes all $m > n$) we obtain the classical Bruhat order.
\end{remark} 

The infinite Bruhat order has an equivalent formulation in terms of Young tableaux. To a permutation $\sigma \in S_\infty$, we associate an infinite tableau of shape $(1 , 2 , 3 , \ldots )$ where the $n$th row has entries $\sigma(1) , \sigma(2) , \ldots, \sigma(n)$ written in increasing order. 
Then for another permutation $\omega$ we have that $\sigma \leq_\text{Bruhat} \omega$ if and only if the entry in each box of the tableau associated to $\sigma$ is less than or equal to the entry in the corresponding box in the tableau associated to $\omega$.

\begin{example}
Consider the permutations $\theta = [2,1,4,3,6,5,8,7, \ldots]$ and  $\rho = [3, 1, 5, 2, 7, 4, 9, 6, 11 , \ldots ]$ defined in Examples \ref{ex: first perm} and \ref{ex: two infinite permutations}. Using the tableau criterion it is not difficult to show that $\theta <_\text{Bruhat} \rho$, as the following diagram indicates. 
\begin{align*}
\begin{matrix}
    \young(2,12,124,1234,12346,123456,1234568) & <_\text{Bruhat} &\young(3,13,135,1235,12357,123457,1234579)
\\ \vdots & &  \vdots
\end{matrix}
\end{align*}
\end{example}


The infinite Bruhat order also has a third description in terms of permutation matrices. To any permutation $\sigma \in S_\infty$ we can assign a permutation matrix $[\sigma_{m ,n}]_{m , n \in \nn}$ where $\sigma_{m , n}$ is $1$ if $m = \sigma(n)$ and zero otherwise. For any ordered pair of subsets $X , Y \subseteq \nn$ we denote by $r_{X , Y}(\sigma)$ the number of $1$'s in the submatrix $[\sigma_{m , n}]_{m \in X, n \in Y}$ (which is its rank). Note that equivalently we could have written: 
\begin{equation*}
    r_{X , Y}(\sigma) = \# \{ p \in Y \mid \sigma(p) \in X \}.
\end{equation*} 

We adopt the following notational shorthand: for any $a , b \in \nn$ define $r_{a , b}(\sigma) = r_{[1, a],[1 , b]}(\sigma)$ and $r_{<a , b}(\sigma) = r_{[1, a),[1 , b]}(\sigma)$ (as well as other reasonable combinations of these notations). Thus $r_{a , b}(\sigma)$ is the number of $1$'s in the upper left submatrix of the permutation matrix of $\sigma$ whose lower right corner is in row $a$ and column $b$. We can use $r_{a , b}(\sigma)$ to characterize the Bruhat order on $S_\infty$. 

\begin{proposition}\label{prop: region criterion for bruhat order}
Given $\sigma, \omega \in S_\infty$, the following are equivalent. 

\begin{enumerate}
    \item $\sigma \leq_\text{Bruhat} \omega$
    \item $r_{a , b}(\sigma) \geq r_{a , b}(\omega)$ for all $a , b \in \nn$
    \item $r_{\geq a , b}(\sigma) \leq r_{\geq a , b}(\omega)$ for all $a , b \in \nn$
\end{enumerate}
\end{proposition}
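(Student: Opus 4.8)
The plan is to prove the chain of equivalences $(1) \Leftrightarrow (2) \Leftrightarrow (3)$, exploiting the fact that these are all ``entry-wise'' comparisons of numerical invariants attached to $\sigma$ and $\omega$. The cleanest route is to first establish $(2) \Leftrightarrow (3)$ purely formally, then prove $(1) \Leftrightarrow (2)$ by translating between the ``sorted-set'' description of $\leq_\text{Bruhat}$ and the rank-of-submatrix description.

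For $(2) \Leftrightarrow (3)$: the key observation is that the columns of the permutation matrix of $\sigma$ each contain exactly one $1$, so for fixed $b$ the number of $1$'s in rows $[1,a]$ and columns $[1,b]$ plus the number of $1$'s in rows $[a+1, \infty)$ and columns $[1,b]$ equals $b$. That is, $r_{a,b}(\sigma) + r_{> a, b}(\sigma) = b$ for all $a, b$, and the same identity holds for $\omega$. Since $r_{\geq a, b} = r_{> (a-1), b}$, condition (2) at all $(a,b)$ is equivalent to $b - r_{> a, b}(\sigma) \geq b - r_{> a, b}(\sigma)$... wait, rather $b - r_{>a,b}(\sigma) \ge b - r_{>a,b}(\omega)$, i.e.\ $r_{> a, b}(\sigma) \le r_{> a, b}(\omega)$ for all $a,b$, which (reindexing $a \mapsto a-1$, noting $a$ ranges over all of $\nn$ and $a=1$ gives the trivial statement $b \le b$) is exactly condition (3). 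I would write this identity out carefully and let the reindexing do the work; this step is routine.

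For $(1) \Leftrightarrow (2)$: I would unwind the definition of $\leq_\text{Bruhat}$. By definition $\sigma \leq_\text{Bruhat} \omega$ iff for every $n$, writing $\{\sigma(1), \ldots, \sigma(n)\} = \{x_1 < \cdots < x_n\}$ and $\{\omega(1), \ldots, \omega(n)\} = \{y_1 < \cdots < y_n\}$, we have $x_i \leq y_i$ for all $i$. The bridge to rank functions is the standard combinatorial fact: for a set $S \subseteq \nn$ of size $n$ with sorted elements $s_1 < \cdots < s_n$, and for any $a \in \nn$, the number of elements of $S$ lying in $[1,a]$ is $\#\{p \leq n : \sigma(p) \leq a\}$ when $S = \{\sigma(1), \ldots, \sigma(n)\}$ — and this count is exactly $r_{a,n}(\sigma)$. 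Now $x_i \le a$ iff at least $i$ elements of $\{\sigma(1),\dots,\sigma(n)\}$ are $\le a$, i.e.\ $r_{a,n}(\sigma) \ge i$. So the condition ``$x_i \le y_i$ for all $i$'' — equivalently ``for all $a$, $\#\{i : y_i \le a\} \le \#\{i : x_i \le a\}$'', which one sees by the usual argument that two sorted sequences are dominated entry-wise iff the associated counting functions are dominated — becomes ``$r_{a,n}(\omega) \le r_{a,n}(\sigma)$ for all $a$''. Quantifying over all $n$ gives precisely condition (2). I would state the sorted-sequence/counting-function equivalence as a small lemma or inline observation and verify both directions.

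The main obstacle I anticipate is getting the inequality directions and the strict-vs-weak bookkeeping exactly right in the $(1) \Leftrightarrow (2)$ step — in particular the reversal (Bruhat-larger $\omega$ corresponds to \emph{smaller} rank numbers $r_{a,b}$, since pushing entries down and to the right depletes the upper-left submatrix), and the elementary but slightly fiddly claim that entry-wise domination of two increasing sequences of the same length is equivalent to the reverse domination of their ``number of terms $\le a$'' functions. Everything else is formal manipulation with the column-sum identity. I would organize the writeup as: (i) the column-sum identity and the deduction $(2) \Leftrightarrow (3)$; (ii) the sorted-sequence counting lemma; (iii) assembling $(1) \Leftrightarrow (2)$ from (ii).
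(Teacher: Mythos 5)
Your proof is correct. For $(2)\Leftrightarrow(3)$ you use exactly the paper's argument: the column-sum identity $r_{a,b}(\sigma) + r_{>a,b}(\sigma) = b$ (the paper writes it as $r_{\geq a+1,b}(\sigma) = b - r_{a,b}(\sigma)$), together with the trivial reindexing $a \mapsto a-1$, with the $a=1$ case of (3) being the vacuous inequality $b \leq b$.

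The only difference is in $(1)\Leftrightarrow(2)$: the paper simply cites \cite[Proposition~27]{gallup2021well}, whereas you supply the argument yourself. What you give is the standard one and is almost certainly what the cited proposition proves: the observation $\#\{p \leq n : \sigma(p) \leq a\} = r_{a,n}(\sigma)$, plus the elementary lemma that for two strictly increasing sequences $x_1 < \cdots < x_n$ and $y_1 < \cdots < y_n$, entry-wise domination $x_i \leq y_i$ is equivalent to the reverse domination $\#\{i : y_i \leq a\} \leq \#\{i : x_i \leq a\}$ of the counting functions, for all thresholds $a$. (Forward: $y_i \leq a$ implies $x_i \leq a$. Backward: taking $a = y_i$ gives $\#\{j : x_j \leq y_i\} \geq i$, so $x_i \leq y_i$.) Your version makes the proposition self-contained, at the cost of a small lemma; the paper's version keeps the proof to two lines by deferring to the earlier reference. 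Both are fine, and your inequality bookkeeping (Bruhat-larger $\Leftrightarrow$ smaller upper-left ranks) is in the correct direction throughout.
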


\begin{proof}
The equivalence of (1) and (2) was proved in \cite[Proposition 27]{gallup2021well}, and (2) is equivalent to (3) because $r_{\geq a + 1 , b}(\sigma) = b - r_{a , b}(\sigma)$. 
\end{proof}

\begin{remark}\label{rmk: rs equal implies permutations are equal}
    Proposition \ref{prop: region criterion for bruhat order}, along with the fact that the Bruhat order is actually a partial order on $S_\infty$, implies that if $r_{a , b}(\sigma) = r_{a , b}(\omega)$ for all $a , b \in \nn$ then $\sigma = \omega$. 
\end{remark}


\section{Cover Relations in the Bruhat Order}\label{sec: Cover Relations in the Bruhat Order}

In this section we give a criterion for when $\sigma \circ (p , q)$ covers $\sigma$ in $S_\infty$ preceded some technical definitions and lemmas. We then give a similar criterion for when $\sigma \lessdot_\text{Bruhat} \sigma \circ (p , q) \leq_\text{Bruhat} \omega$ in the relative situation when $\sigma <_\text{Bruhat} \omega$. These criteria are of course well known for $S_n$ (see, e.g. \cite[Lemma 2.1.4]{bjorner2005combinatorics}), but our proof methods differ from those typically used in $S_n$ because we do not currently have the tools of reduced words, inversion number, and length in $S_\infty$, so we give a precise development to be thorough. 

First note that if we write $\omega \in S_\infty$ in one-line notation as $\omega = [a_1, a_2, a_3, \ldots a_p, \ldots, a_q, \ldots]$ then $\omega \circ (p,q) = [a_1, a_2, a_3, \ldots a_q, \ldots, a_p, \ldots]$, i.e. composing $\omega$ with the transposition $(p,q)$ on the right swaps elements $a_p$ and $a_q$ in the one-line notation while leaving all other elements fixed. We now prove a lemma which describes the difference between the permutation matrices of $\sigma$ and $\sigma \circ (p , q)$ for certain $p$ and $q$. 

\begin{lemma}\label{lem: ones in boxes after applying a transposition}
    Suppose $\sigma \in S_\infty$ and $p , q \in \nn$ are such that $p < q$ and $\sigma(p) < \sigma(q)$. Then we have the following formulas. 
    \begin{equation*}
        r_{a , b}(\sigma) = \left\{ \begin{matrix} r_{a , b}(\sigma \circ (p , q)) & \text{ if } & b < p, b \geq q , a < \sigma(p), \text{ or } a \geq \sigma(q) \\ r_{a , b}(\sigma \circ (p , q)) + 1 & \text{ if } & \sigma(p) \leq a < \sigma(q) \text{ and } p \leq b < q  \end{matrix} \right. 
    \end{equation*} 
\end{lemma}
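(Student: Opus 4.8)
The plan is to count, for a fixed pair $(a,b)$, the $1$'s in the upper-left submatrix with corner $(a,b)$ and see exactly how this count changes when we swap columns $p$ and $q$ of the permutation matrix (recall that right-multiplication by $(p,q)$ permutes the columns $p$ and $q$). Only rows $\sigma(p)$ and $\sigma(q)$ and only columns $p$ and $q$ are involved: in the matrix of $\sigma$ there is a $1$ at position $(\sigma(p),p)$ and a $1$ at position $(\sigma(q),q)$; in the matrix of $\sigma\circ(p,q)$ these become a $1$ at $(\sigma(p),q)$ and a $1$ at $(\sigma(q),p)$. All other entries of the two matrices agree. So $r_{a,b}(\sigma)-r_{a,b}(\sigma\circ(p,q))$ is the difference between $[\sigma(p)\le a,\ p\le b]+[\sigma(q)\le a,\ q\le b]$ and $[\sigma(p)\le a,\ q\le b]+[\sigma(q)\le a,\ p\le b]$, where $[\cdot]$ is the indicator of the enclosed inequalities and I am using the equivalent description $r_{a,b}(\tau)=\#\{n\le b \mid \tau(n)\le a\}$ from the paragraph preceding Proposition \ref{prop: region criterion for bruhat order}.

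Next I would do the bookkeeping on this difference. Writing $\alpha=[\sigma(p)\le a]$, $\alpha'=[\sigma(q)\le a]$, $\beta=[p\le b]$, $\beta'=[q\le b]$, the difference equals $\alpha\beta+\alpha'\beta'-\alpha\beta'-\alpha'\beta=(\alpha-\alpha')(\beta-\beta')$. Since $p<q$ we have $\beta\ge\beta'$, and since $\sigma(p)<\sigma(q)$ we have $\alpha\ge\alpha'$, so both factors lie in $\{0,1\}$ and the product is $1$ precisely when $\beta=1,\beta'=0$ and $\alpha=1,\alpha'=0$, i.e. when $p\le b<q$ and $\sigma(p)\le a<\sigma(q)$; otherwise it is $0$. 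This is exactly the claimed case split: the ``$+1$'' case is $\sigma(p)\le a<\sigma(q)$ and $p\le b<q$, and the complement of that region — which is precisely $b<p$, or $b\ge q$, or $a<\sigma(p)$, or $a\ge\sigma(q)$ — gives equality.

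I expect the only mild obstacle to be purely notational: being careful that the regions in the ``equality'' case of the lemma really do form the set-theoretic complement (within $\nn\times\nn$) of the rectangle $\{\sigma(p)\le a<\sigma(q),\ p\le b<q\}$, so that the two cases are exhaustive. Concretely, the negation of ``($p\le b<q$) and ($\sigma(p)\le a<\sigma(q)$)'' is ``($b<p$ or $b\ge q$) or ($a<\sigma(p)$ or $a\ge\sigma(q)$)'', which is the list appearing in the statement; so the case split is complete. No deeper input is needed — in particular Proposition \ref{prop: region criterion for bruhat order} is not required here, only the combinatorial description of $r_{a,b}$ and the effect of right-multiplication by a transposition on columns of the permutation matrix.
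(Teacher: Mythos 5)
Your proof is correct and follows essentially the same idea as the paper's: both observe that only columns $p$ and $q$ of the permutation matrix change, so only the four potential positions $(\sigma(p),p)$, $(\sigma(q),q)$, $(\sigma(p),q)$, $(\sigma(q),p)$ can affect the count in the rectangle with corner $(a,b)$. The paper phrases this as a two-case descriptive argument (rearranging $1$'s vs.\ adding a $1$ at $(\sigma(p),p)$), whereas you package the same bookkeeping as the single identity $r_{a,b}(\sigma)-r_{a,b}(\sigma\circ(p,q))=(\alpha-\alpha')(\beta-\beta')$ and read off the answer; this is a tidy, uniform way to present the same computation.
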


\begin{proof}
Since the matrix of $\sigma \circ (p , q)$ is obtained from that of $\sigma$ by swapping columns $p$ and $q$, notice that if $b < p$,  $b \geq q$, $a < \sigma(p)$, or $a \geq \sigma(q)$ then the submatrix $[(\sigma \circ (p , q))_{m , n}]_{1 \leq m \leq a , 1 \leq n \leq b}$ is either identical to or obtained by rearranging the $1$'s from the submatrix $[\sigma_{m , n}]_{1 \leq m \leq a , 1 \leq n \leq b}$, hence $r_{a, b}( \sigma \circ (p , q) ) = r_{a, b}(\sigma)$. 

On the other hand, if $\sigma(p) \leq a < \sigma(q)$ and $p \leq b < q$, then the submatrix $[\sigma_{m , n}]_{1 \leq m \leq a , 1 \leq n \leq b}$ is obtained from $[(\sigma \circ (p , q))_{m , n}]_{1 \leq m \leq a , 1 \leq n \leq b}$ by adding a $1$ in the $(\sigma(p) , p)$ position, so we have $r_{a, b}( \sigma ) = r_{a, b}(\sigma \circ (p , q)) + 1$.
\end{proof}

Next we further compare $\sigma$ and $\sigma \circ (p , q)$ using the following definition. 

\begin{definition}\label{def: d}
    For any $\sigma , \omega \in S_\infty$ with $\sigma \neq \omega$, we denote by $d(\sigma , \omega)$ the first $n \in \nn$ such that $\sigma(n) \neq \omega(n)$.
\end{definition}

We record the following lemma, which was proved in \cite[Lemma 25]{gallup2021well}.

\begin{lemma}\label{lem: less than bruhat implies values at d less than}
    If $\sigma , \omega \in S_\infty$ are such that $\sigma <_\text{Bruhat} \omega$, then $\sigma(d(\sigma , \omega)) < \omega(d(\sigma , \omega))$. 
\end{lemma}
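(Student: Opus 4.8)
The plan is to derive this directly from the rank-function characterization of the infinite Bruhat order in Proposition \ref{prop: region criterion for bruhat order}, specializing the inequalities $r_{a,b}(\sigma) \geq r_{a,b}(\omega)$ to a single well-chosen column. Throughout, let $[\,\cdot\,]$ denote the Iverson bracket (equal to $1$ if the enclosed statement is true and $0$ otherwise).

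Write $d = d(\sigma , \omega)$, so that $\sigma(p) = \omega(p)$ for all $p < d$ while $\sigma(d) \neq \omega(d)$. First I would observe that for any $a \in \nn$,
\[
r_{a , d}(\sigma) = \#\{ p \leq d : \sigma(p) \leq a \} = \#\{ p \leq d - 1 : \sigma(p) \leq a \} + [\sigma(d) \leq a],
\]
and likewise for $\omega$. Since $\sigma$ and $\omega$ agree on $\{1, \ldots, d - 1\}$, the counts $\#\{ p \leq d - 1 : \sigma(p) \leq a \}$ and $\#\{ p \leq d - 1 : \omega(p) \leq a \}$ coincide, so
\[
r_{a , d}(\sigma) - r_{a , d}(\omega) = [\sigma(d) \leq a] - [\omega(d) \leq a].
\]

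Now I would apply Proposition \ref{prop: region criterion for bruhat order}: since $\sigma \leq_\text{Bruhat} \omega$, we have $r_{a , d}(\sigma) \geq r_{a , d}(\omega)$, i.e. $[\sigma(d) \leq a] \geq [\omega(d) \leq a]$ for every $a \in \nn$. Taking $a = \omega(d)$ forces $[\sigma(d) \leq \omega(d)] \geq 1$, hence $\sigma(d) \leq \omega(d)$; and since $\sigma(d) \neq \omega(d)$ by the definition of $d$, we conclude $\sigma(d) < \omega(d)$.

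There is essentially no obstacle here once Proposition \ref{prop: region criterion for bruhat order} is available; the only point requiring care is the bookkeeping that the two permutations contribute identical rank counts in columns $1$ through $d - 1$, which is immediate from the definition of $d$. An alternative, slightly longer route avoiding the rank function would compare the sorted sets $\{\sigma(1), \ldots, \sigma(d)\}$ and $\{\omega(1), \ldots, \omega(d)\}$ directly, observing that they differ only in the single entry ($\sigma(d)$ versus $\omega(d)$) appended to the common set $\{\sigma(1), \ldots, \sigma(d-1)\}$, and checking that $\sigma(d) > \omega(d)$ would violate the entrywise inequality between the two sorted lists required by $\leq_\text{Bruhat}$; but the rank-function argument is the cleaner of the two.
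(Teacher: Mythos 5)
Your proof is correct. Note that the paper does not actually prove this lemma internally — it simply cites \cite[Lemma 25]{gallup2021well} — so there is no in-text argument to compare against; what you have given is a clean, self-contained derivation directly from Proposition \ref{prop: region criterion for bruhat order}. The crux is sound: since $\sigma$ and $\omega$ agree on positions $1,\ldots,d-1$, the rank counts in columns $1$ through $d-1$ contribute identically, so
\[
r_{a,d}(\sigma) - r_{a,d}(\omega) = [\sigma(d)\leq a] - [\omega(d)\leq a],
\]
and specializing to $a=\omega(d)$ together with $r_{a,b}(\sigma)\geq r_{a,b}(\omega)$ forces $\sigma(d)\leq\omega(d)$; the definition of $d$ upgrades this to a strict inequality. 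Your alternative sketch via the tableau (sorted-set) criterion is also valid and is essentially the same calculation transported through a different equivalent characterization of the Bruhat order. Either version would make a perfectly good in-text replacement for the external citation.
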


Thus if $\sigma <_\text{Bruhat} \omega$ then since $\sigma(\ell) = \omega(\ell)$ for all $\ell < d(\sigma , \omega)$, it must be that $d(\sigma , \omega) < f(\sigma , \omega)$ and $\sigma(d(\sigma, \omega)) < \sigma(f(\sigma , \omega))$ where we define $f(\sigma , \omega) := \sigma^{-1}(\omega(d(\sigma , \omega)))$. Hence there exists a minimal $m$ with $d(\sigma , \omega) < m \leq f(\sigma, \omega)$ such that $\sigma(d( \sigma , \omega)) < \sigma(m) \leq \sigma(f(\sigma , \omega))$. We denote this minimal $m$ by $m(\sigma , \omega)$. Minimality here means that if $d(\sigma , \omega) < \ell < m(\sigma , \omega)$ then either $\sigma(\ell) < \sigma(d(\sigma , \omega))$ or $\sigma(\ell) > \sigma(f(\sigma , \omega))$.

\begin{example}
Consider the permutations $\theta = [2,1,4,3,6,5,8,7, 10, \ldots]$ and $\rho = [3, 1, 5, 2, 7, 4, 9, 6, 11 , \ldots ]$ defined in Examples \ref{ex: first perm} and \ref{ex: two infinite permutations} and recall that $\theta <_\text{Bruhat} \rho$. Then we have $d(\theta,\rho) = 1$ and $m(\theta,\rho) = f(\theta,\rho) = 4$. 
\end{example}

Now we can prove our criterion for when composing on the right by $(p , q)$ goes up in the Bruhat order. 

\begin{proposition}\label{prop: transposition going up in bruhat}
    For any $\sigma \in S_\infty$, and any $p < q$, we have that $\sigma <_\text{Bruhat} \sigma \circ (p , q)$ if and only if the following condition is satisfied.
    \begin{enumerate}
        \item[(C1)] $\sigma(p) < \sigma(q)$
    \end{enumerate} 
\end{proposition}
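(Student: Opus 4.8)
The plan is to reduce everything to the rank-function criterion for the infinite Bruhat order (Proposition \ref{prop: region criterion for bruhat order}) together with Lemma \ref{lem: ones in boxes after applying a transposition}. As a preliminary observation, I would note that $\sigma \circ (p,q) \neq \sigma$: since $\sigma$ is a bijection and $p \neq q$, we have $\sigma(p) \neq \sigma(q)$, so the one-line notations of $\sigma$ and $\sigma \circ (p,q)$ (which differ by swapping the entries in positions $p$ and $q$) are not equal. This is what will let us upgrade a non-strict Bruhat inequality to a strict one.

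For the ``if'' direction, assume (C1), i.e.\ $\sigma(p) < \sigma(q)$. This is exactly the hypothesis of Lemma \ref{lem: ones in boxes after applying a transposition}, which tells us that for every $a, b \in \nn$ we have $r_{a,b}(\sigma) \in \{ r_{a,b}(\sigma \circ (p,q)),\ r_{a,b}(\sigma \circ (p,q)) + 1 \}$; in particular $r_{a,b}(\sigma) \geq r_{a,b}(\sigma \circ (p,q))$ for all $a,b$. By the equivalence of (1) and (2) in Proposition \ref{prop: region criterion for bruhat order}, this says precisely $\sigma \leq_\text{Bruhat} \sigma \circ (p,q)$, and combining with $\sigma \neq \sigma \circ (p,q)$ from the preliminary observation gives $\sigma <_\text{Bruhat} \sigma \circ (p,q)$.

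For the ``only if'' direction I would argue by contrapositive, exploiting that $(p,q)$ is an involution. Suppose (C1) fails; since $\sigma(p) \neq \sigma(q)$ this forces $\sigma(p) > \sigma(q)$. Put $\tau := \sigma \circ (p,q)$. Then $\tau(p) = \sigma(q) < \sigma(p) = \tau(q)$, so $\tau$ \emph{does} satisfy condition (C1) with respect to the pair $(p,q)$. Applying the already-established ``if'' direction to $\tau$ yields $\tau <_\text{Bruhat} \tau \circ (p,q) = \sigma$. Since $<_\text{Bruhat}$ is a strict partial order (Proposition \ref{prop: region criterion for bruhat order} identifies it with an honest partial order, hence it is antisymmetric), $\tau <_\text{Bruhat} \sigma$ rules out $\sigma <_\text{Bruhat} \tau$, that is, $\sigma \not<_\text{Bruhat} \sigma \circ (p,q)$, completing the contrapositive.

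I do not expect a real obstacle here: the entire content is carried by Lemma \ref{lem: ones in boxes after applying a transposition} and the rank characterization of Bruhat order. The only points needing a touch of care are (i) recording $\sigma \circ (p,q) \neq \sigma$ so the weak inequality becomes strict, and (ii) noticing that the converse follows formally from the forward direction applied to $\sigma \circ (p,q)$ by the involution trick and antisymmetry, rather than requiring a separate direct computation.
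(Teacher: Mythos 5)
Your ``if'' direction matches the paper's argument almost verbatim: both invoke Lemma \ref{lem: ones in boxes after applying a transposition} to get $r_{a,b}(\sigma)\geq r_{a,b}(\sigma\circ(p,q))$, pass through Proposition \ref{prop: region criterion for bruhat order} to obtain $\sigma\leq_\text{Bruhat}\sigma\circ(p,q)$, and then upgrade to a strict inequality using $\sigma\neq\sigma\circ(p,q)$. Where you genuinely diverge is the ``only if'' direction. The paper's proof observes that $p=d(\sigma,\sigma\circ(p,q))$ (the first position where the two permutations disagree) and then invokes Lemma \ref{lem: less than bruhat implies values at d less than}, which is imported from \cite{gallup2021well} and asserts that $\sigma<_\text{Bruhat}\omega$ forces $\sigma(d(\sigma,\omega))<\omega(d(\sigma,\omega))$; specializing $\omega=\sigma\circ(p,q)$ immediately gives $\sigma(p)<\sigma(q)$. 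You instead argue by contrapositive: if $\sigma(p)>\sigma(q)$, then $\tau:=\sigma\circ(p,q)$ does satisfy (C1), so the already-proved forward direction gives $\tau<_\text{Bruhat}\tau\circ(p,q)=\sigma$, and antisymmetry of the partial order rules out $\sigma<_\text{Bruhat}\tau$. Both arguments are correct. Yours is the more self-contained one here --- it deduces the converse formally from the forward direction via the involution $(p,q)^{-1}=(p,q)$ and does not need the auxiliary lemma about $d$. The paper's choice is presumably stylistic continuity, since the function $d(\sigma,\omega)$ and Lemma \ref{lem: less than bruhat implies values at d less than} are reused heavily in the surrounding sections (Lemma \ref{lem: d m is a relative candidate}, Lemma \ref{lem: d , m is lex first relative candidate}, etc.), so invoking them here costs nothing; but as a standalone proof of this proposition your reduction is at least as clean.
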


\begin{proof}
Suppose first that $\sigma <_\text{Bruhat} \sigma \circ (p , q)$. Then $p = d(\sigma , \sigma \circ (p, q))$ and so by Lemma \ref{lem: less than bruhat implies values at d less than} we have that $\sigma(p) <[\sigma \circ (p , q)] (p) = \sigma(q)$ as desired. Now suppose that $\sigma(p) < \sigma(q)$. By Lemma \ref{lem: ones in boxes after applying a transposition}, for any $a , b \in \nn$, we have $r_{a, b}( \sigma \circ (p , q) ) \leq r_{a, b}(\sigma)$, which implies by Proposition \ref{prop: region criterion for bruhat order} that $\sigma \leq_\text{Bruhat} \sigma \circ (p , q)$. Since $\sigma \neq \sigma \circ (p , q)$, the result follows. 
\end{proof}

From this proposition we obtain the following corollary which is intuitively obvious but we include for completeness. 

\begin{corollary}\label{cor: no maximal elements}
The poset $(S_\infty , <_\text{Bruhat})$ does not have a maximal element.
\end{corollary}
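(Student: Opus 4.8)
The plan is to deduce this directly from Proposition \ref{prop: transposition going up in bruhat}. By that proposition, given any $\sigma \in S_\infty$, in order to produce an element strictly above $\sigma$ in the Bruhat order it suffices to exhibit natural numbers $p < q$ with $\sigma(p) < \sigma(q)$: for such a pair we then have $\sigma <_\text{Bruhat} \sigma \circ (p , q)$, so $\sigma$ is not maximal. Since $\sigma$ is arbitrary, this shows $(S_\infty , <_\text{Bruhat})$ has no maximal element.

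So the only thing to verify is the elementary combinatorial fact that every bijection $\sigma : \nn \to \nn$ admits an ``ascent,'' i.e. a pair $p < q$ with $\sigma(p) < \sigma(q)$. I would argue by contradiction: if no such pair existed, then for all $p < q$ we would have $\sigma(p) > \sigma(q)$ (the case $\sigma(p) = \sigma(q)$ being excluded by injectivity of $\sigma$). Taking $p = n$ and $q = n+1$ for each $n$ then forces $\sigma(1) > \sigma(2) > \sigma(3) > \cdots$, an infinite strictly decreasing sequence of natural numbers, which is impossible by the well-ordering of $\nn$. Hence an ascent exists.

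There is essentially no obstacle here — the corollary falls out of the preceding proposition, and the only ingredients are the injectivity of a permutation and the well-ordering of $\nn$. (One could also observe more explicitly that $\sigma \circ (1 , \sigma^{-1}(\sigma(1)+1))$ works when $\sigma^{-1}(\sigma(1)+1) > 1$, and handle the remaining bookkeeping, but the contradiction argument above is cleaner.)
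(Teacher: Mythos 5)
Your proof is correct and follows the same route as the paper: both invoke Proposition \ref{prop: transposition going up in bruhat} after noting that every permutation of $\nn$ has an ascent. You simply spell out the ascent existence via the well-ordering of $\nn$, which the paper leaves as an immediate observation.
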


\begin{proof}
If $\sigma \in S_\infty$, then given any $p \in \nn$, there must exist some $q > p$ such that $\sigma(p) < \sigma(q)$. By Proposition \ref{prop: transposition going up in bruhat} we have that $\sigma <_\text{Bruhat} \sigma \circ (p ,  q)$, so $\sigma$ cannot be maximal. 
\end{proof}

So far we have determined when $\sigma \circ (p , q)$ is larger than $\sigma$ in the infinite Bruhat order, but now we will characterize exactly when this is a cover, preceded by another technical lemma that will be needed for the proof. 

\begin{lemma}\label{lem: criteria for equal permutation matrices}
    Given $a , b \in \nn$, and $\sigma , \omega \in S_\infty$, we have that $\sigma_{a , b} = \omega_{a , b}$ if the following conditions are satisfied.  
    \begin{enumerate}
        \item $r_{<a , b}(\sigma) = r_{<a , b}(\omega)$
        \item $r_{a , <b}(\sigma) = r_{a , <b}(\omega)$
        \item $r_{<a , <b}(\sigma) = r_{<a , <b}(\omega)$
        \item $r_{a , b}(\sigma) = r_{a , b}(\omega)$
    \end{enumerate} 
\end{lemma}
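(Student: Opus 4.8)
The plan is to express the single matrix entry $\sigma_{a,b}$ in terms of the four rank quantities appearing in the hypotheses via an inclusion–exclusion identity valid for \emph{every} permutation matrix, and then simply compare the resulting formulas for $\sigma$ and for $\omega$.

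Concretely, recall that $r_{a,b}(\sigma)$ counts the $1$'s of the permutation matrix of $\sigma$ lying in rows $1,\dots,a$ and columns $1,\dots,b$, i.e.\ those in the box $[1,a]\times[1,b]$. Writing this box as the disjoint union of the four pieces $[1,a)\times[1,b)$, $\{a\}\times[1,b)$, $[1,a)\times\{b\}$, and $\{a\}\times\{b\}$, counting $1$'s in each piece, and rearranging, one obtains
\begin{equation*}
\sigma_{a,b} \;=\; r_{a,b}(\sigma) - r_{<a,b}(\sigma) - r_{a,<b}(\sigma) + r_{<a,<b}(\sigma).
\end{equation*}
Equivalently, using $r_{X,Y}(\sigma)=\#\{p\in Y : \sigma(p)\in X\}$: the difference $r_{a,b}(\sigma)-r_{<a,b}(\sigma)$ equals $\#\{p\le b : \sigma(p)=a\}$, and subtracting the analogous count with $p<b$ leaves exactly the indicator of $\sigma(b)=a$, which is $\sigma_{a,b}$. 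The identical identity holds with $\omega$ in place of $\sigma$.

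Now hypotheses (1)–(4) assert precisely that each of the four terms on the right-hand side of this formula takes the same value for $\sigma$ and for $\omega$; hence $\sigma_{a,b}=\omega_{a,b}$, which is the claim.

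I do not expect any real obstacle: the only point requiring care is the bookkeeping of the half-open intervals $[1,a)$ versus $[1,a]$ in the $r$-notation, so that the inclusion–exclusion signs come out correctly. Once the displayed identity is in hand, the conclusion is immediate.
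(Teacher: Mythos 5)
Your proof is correct and takes a genuinely different, and cleaner, route than the paper's. You derive the pointwise identity
\begin{equation*}
\sigma_{a,b} \;=\; r_{a,b}(\sigma) - r_{<a,b}(\sigma) - r_{a,<b}(\sigma) + r_{<a,<b}(\sigma),
\end{equation*}
which follows immediately by decomposing $[1,a]\times[1,b]$ into the four disjoint boxes $[1,a)\times[1,b)$, $\{a\}\times[1,b)$, $[1,a)\times\{b\}$, $\{a\}\times\{b\}$, and noting that the hypotheses (1)--(4) make each of the four terms on the right agree for $\sigma$ and $\omega$. The paper instead argues case-by-case: it assumes $\sigma_{a,b}=1$, uses the fact that a permutation matrix has exactly one $1$ in each column to deduce $r_{<a,<b}(\sigma)=r_{<a,b}(\sigma)$ and $r_{a,[b,b]}(\sigma)=1$, transfers these to $\omega$ via the hypotheses, and finally invokes symmetry in $\sigma$ and $\omega$ to handle the case $\sigma_{a,b}=0$. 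Your inclusion--exclusion argument buys you generality (it works for any $0$--$1$ matrix, not just permutation matrices, and doesn't need the ``one $1$ per column'' property) and avoids the case split and symmetry appeal entirely; the paper's proof, by contrast, stays closer to the hands-on rank bookkeeping used elsewhere in that section. Both are sound; yours is the shorter path.
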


\begin{proof}
Suppose $\sigma_{a , b} = 1$. Then $\sigma_{i , b} = 0$ for all $i < a$, and hence $r_{< a , < b}(\sigma) = r_{< a , b}(\sigma)$. But by hypothesis we have $r_{< a , < b}(\sigma) = r_{< a , < b}(\omega)$ and also that $r_{< a , b}(\omega) = r_{< a , b}(\sigma)$, which implies that $r_{< a , < b}(\omega) = r_{< a , b}(\omega)$ as well, therefore $\omega_{i , b} = 0$ for all $i < a$ as well, which in particular implies that $r_{<a , [b , b]}(\omega) = 0$. However $r_{a , < b}(\omega) + r_{a , [b , b]}(\omega) = r_{a , b}(\omega) 
= r_{a , b}(\sigma) = r_{a , < b}(\sigma) + r_{a , [b , b]}(\sigma)$. Since $r_{a , < b}(\omega) = r_{a , < b}(\sigma)$ by hypothesis, subtraction yields $r_{a , [b , b]}(\omega) = r_{a , [b , b]}(\sigma) = 1$. Since $r_{<a , [b , b]}(\omega) = 0$ and $r_{a , [b , b]}(\omega) = r_{<a , [b , b]}(\omega) + \omega_{a , b}$, again subtraction yields $\omega_{a , b} = 1$. 

Since the situation is entirely symmetric in $\sigma$ and $\tau$, it follows that $\sigma_{a , b} = 1$ if and only if $\omega_{a , b} = 1$, and since the only possible values for $\sigma_{a , b}$ and $\omega_{a , b}$ are $0$ or $1$, the result follows.
\end{proof}

\begin{proposition}\label{prop: cover relations in the bruhat order}
    Let $\sigma \in S_\infty$ and $p, q \in \nn$ be such that $p < q$. Then $\sigma \lessdot_\text{Bruhat} \sigma \circ (p , q)$ if and only if the following conditions are satisfied. 
    \begin{enumerate}
        \item[(C1)] $\sigma(p) < \sigma(q)$
        \item[(C2)] For all $p < \ell < q$ we have that either $\sigma(\ell) < \sigma(p)$ or $\sigma(\ell) > \sigma(q)$.
    \end{enumerate}
\end{proposition}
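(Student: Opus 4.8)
The plan is to prove both directions by combining the ``going up'' criterion (Proposition \ref{prop: transposition going up in bruhat}) with the region-rank characterization of the Bruhat order (Proposition \ref{prop: region criterion for bruhat order}), together with the explicit rank formulas of Lemma \ref{lem: ones in boxes after applying a transposition}. Throughout, set $\tau = \sigma \circ (p,q)$ and assume (C1), so that $\sigma <_\text{Bruhat} \tau$ by Proposition \ref{prop: transposition going up in bruhat}; the question is exactly whether the half-open interval $[\sigma,\tau]$ contains a third element, i.e. whether there is some $\pi$ with $\sigma <_\text{Bruhat} \pi <_\text{Bruhat} \tau$.

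First I would handle the easy direction: assume (C1) and (C2), and show $\sigma \lessdot_\text{Bruhat} \tau$. Suppose for contradiction that $\sigma <_\text{Bruhat} \pi <_\text{Bruhat} \tau$ for some $\pi \neq \sigma, \tau$. By Lemma \ref{lem: ones in boxes after applying a transposition}, $r_{a,b}(\sigma) - r_{a,b}(\tau)$ is $1$ when $\sigma(p) \le a < \sigma(q)$ and $p \le b < q$, and $0$ otherwise. Since $r_{a,b}(\sigma) \ge r_{a,b}(\pi) \ge r_{a,b}(\tau)$ for all $a,b$ (Proposition \ref{prop: region criterion for bruhat order}), the rank function $r_{a,b}(\pi)$ agrees with $r_{a,b}(\sigma)$ outside the ``box'' $B = \{(a,b) : \sigma(p) \le a < \sigma(q),\ p \le b < q\}$, and inside $B$ it equals either $r_{a,b}(\sigma)$ or $r_{a,b}(\sigma) - 1$. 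Using Remark \ref{rmk: rs equal implies permutations are equal}, since $\pi \neq \sigma$ and $\pi \neq \tau$, the set of $(a,b) \in B$ where $r_{a,b}(\pi) = r_{a,b}(\sigma)-1$ is nonempty and not all of $B$. Now the key point: hypothesis (C2) says that for $p < \ell < q$, the column $\ell$ of $\sigma$ has its $1$ either in a row $< \sigma(p)$ or in a row $\ge \sigma(q)$, so within the row-range $[\sigma(p),\sigma(q))$ the submatrix of $\sigma$ has exactly one $1$ (at position $(\sigma(p),p)$) and column $q$ of $\sigma$ contributes a $1$ at row $\sigma(q)$ lying just outside. I would show that the monotonicity and submodularity-type constraints that $r_{a,b}(\pi)$ must satisfy to come from an actual permutation matrix (each row and column sum $1$), combined with $r_{a,b}(\pi)$ being squeezed between $r_{a,b}(\sigma)$ and $r_{a,b}(\tau)$, force $r_{a,b}(\pi)$ to equal $r_{a,b}(\sigma)$ everywhere or $r_{a,b}(\tau)$ everywhere. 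Concretely: let $(a_0,b_0)$ be minimal in $B$ (say, lexicographically, or take $a_0$ minimal then $b_0$ minimal) with $r_{a_0,b_0}(\pi) < r_{a_0,b_0}(\sigma)$; using $r_{<a_0,b_0}(\pi) = r_{<a_0,b_0}(\sigma)$, $r_{a_0,<b_0}(\pi) = r_{a_0,<b_0}(\sigma)$, etc., and Lemma \ref{lem: criteria for equal permutation matrices} applied along the boundary of $B$, deduce that the single ``missing'' $1$ propagates to make $\pi_{a,b} = \tau_{a,b}$ for all $(a,b)$ in the lower-right quadrant from $(a_0,b_0)$, and $\pi_{a,b} = \sigma_{a,b}$ elsewhere; because column $\ell$ for $p<\ell<q$ has no $1$ of $\sigma$ in $[\sigma(p),\sigma(q))$ by (C2), there is no ``room'' for an intermediate configuration, and one concludes $\pi = \sigma$ or $\pi = \tau$, a contradiction.

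For the converse, I would prove the contrapositive: assume (C1) holds but (C2) fails, and exhibit an explicit $\pi$ with $\sigma <_\text{Bruhat} \pi <_\text{Bruhat} \tau$. If (C2) fails there is some $\ell$ with $p < \ell < q$ and $\sigma(p) < \sigma(\ell) < \sigma(q)$. I would then take $\pi = \sigma \circ (p,\ell)$ (or, if that fails to lie below $\tau$, $\pi = \sigma \circ (\ell, q)$; in fact one of these works, and checking which requires comparing $\sigma(\ell)$ against the other values). Since $\sigma(p) < \sigma(\ell)$, Proposition \ref{prop: transposition going up in bruhat} gives $\sigma <_\text{Bruhat} \sigma\circ(p,\ell) =: \pi$, and $\pi \neq \tau$ since they differ as permutations (e.g. $\pi(q) = \sigma(q) \neq \sigma(\ell) = \tau(q)$, as $\sigma(\ell) < \sigma(q)$). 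It remains to check $\pi <_\text{Bruhat} \tau$, i.e. $r_{a,b}(\pi) \ge r_{a,b}(\tau)$ for all $a,b$; using Lemma \ref{lem: ones in boxes after applying a transposition} twice (once comparing $\sigma$ with $\pi$ over the box $\{\sigma(p)\le a<\sigma(\ell),\ p\le b<\ell\}$, once comparing $\sigma$ with $\tau$ over the box $B$), this reduces to the elementary set-inclusion $\{\sigma(p)\le a<\sigma(\ell),\ p\le b<\ell\} \subseteq B$, which holds because $\sigma(\ell) < \sigma(q)$ and $\ell < q$. Hence $r_{a,b}(\sigma) - r_{a,b}(\pi) \le r_{a,b}(\sigma) - r_{a,b}(\tau)$ pointwise, giving $r_{a,b}(\pi) \ge r_{a,b}(\tau)$, so $\pi \le_\text{Bruhat} \tau$, and since $\pi \neq \tau$ we get $\pi <_\text{Bruhat} \tau$, showing $\sigma$ is not covered by $\tau$.

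The main obstacle I anticipate is the first (``sufficiency'') direction: rigorously ruling out any intermediate permutation $\pi$ purely from the rank inequalities is fiddly, because one must translate the combinatorial content of (C2) — that no column strictly between $p$ and $q$ has its entry in the row-band $[\sigma(p),\sigma(q))$ — into the statement that the only permutation matrices whose rank function is pinned between those of $\sigma$ and $\tau$ are $\sigma$ and $\tau$ themselves. I expect the cleanest route is to localize: all the differences between the matrices of $\sigma$, $\tau$, and any candidate $\pi$ occur inside the finite window of rows $[\sigma(p),\sigma(q)]$ and columns $[p,q]$, where (C2) forces $\sigma$ to look like a single descent $\sigma(p)<\sigma(q)$ with all interior columns ``escaping'' above or below the band; one can then argue directly on this finite sub-configuration (essentially the $S_n$ case, reducible to \cite[Lemma 2.1.4]{bjorner2005combinatorics}) that it admits no refinement, and use Lemma \ref{lem: criteria for equal permutation matrices} to transport equality of rank functions back to equality of permutations.
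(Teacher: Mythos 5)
Your treatment of the necessity direction (cover implies (C1) and (C2)) is correct and somewhat cleaner than the paper's. You take $\pi = \sigma \circ (p,\ell)$ for the offending $\ell$, get $\sigma <_\text{Bruhat} \pi$ from Proposition~\ref{prop: transposition going up in bruhat}, and then verify $\pi <_\text{Bruhat} \sigma\circ(p,q)$ by a direct rank comparison via Lemma~\ref{lem: ones in boxes after applying a transposition}: the difference box for $(p,\ell)$ is nested inside the one for $(p,q)$, so $r_{a,b}(\pi) \ge r_{a,b}(\sigma\circ(p,q))$ pointwise and Proposition~\ref{prop: region criterion for bruhat order} gives $\pi \le_\text{Bruhat} \sigma\circ(p,q)$. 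The paper instead factors $(p,q) = (p,\ell)(\ell,q)(p,\ell)$ and applies the going-up proposition three times; your box-inclusion argument is a legitimate shortcut. (Small typo: $\tau(q) = \sigma(p)$, not $\sigma(\ell)$, but $\pi(q) = \sigma(q) \ne \sigma(p) = \tau(q)$ still shows $\pi \ne \tau$.) Also your hedge that you might instead need $\sigma\circ(\ell,q)$ is unnecessary — the box inclusion shows $\sigma\circ(p,\ell)$ always works.

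The sufficiency direction — which you correctly flag as the difficult one — is where your proposal has a genuine gap. The localization idea, using Lemma~\ref{lem: criteria for equal permutation matrices} together with well-founded induction to force $\pi_{i,j} = \sigma_{i,j}$ outside the finite window, is exactly right and matches the paper's step~$(\ast\ast\ast)$. But the ``minimal $(a_0,b_0)$ in $B$ where the rank drops, then the single missing $1$ propagates to fill a lower-right quadrant'' step is not correct as stated: the set $D = \{(a,b)\in B : r_{a,b}(\pi) = r_{a,b}(\sigma)-1\}$ need not be a quadrant anchored at a single minimal corner, since for a fixed row $a$ the function $b \mapsto r_{a,b}(\sigma)-r_{a,b}(\pi)$ can go up and down within $\{0,1\}$ repeatedly (as ones of $\sigma$ and $\pi$ enter in different columns). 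So identifying a minimal corner of $D$ does not by itself determine $\pi$, and you still need the extra combinatorial input. What the paper extracts from (C2) after localization is quantitative: first $\sigma(p)\le\tau(p)\le\sigma(q)$, then — using (C2) — $\tau(p)\in\{\sigma(p),\sigma(q)\}$, and separately $r_{[\sigma(p),\sigma(q)],[p,q)}(\tau)=1$ (because (C2) forces that count to be exactly $1$ for $\sigma$). Those two finite facts plus $(\ast\ast\ast)$ pin $\pi$ to be $\sigma$ or $\tau$. Your fallback suggestion to invoke \cite[Lemma 2.1.4]{bjorner2005combinatorics} after localizing is plausible in spirit, but also unworked, and is not as routine as it sounds: the restricted configuration inside the window $[p,q]\times[\sigma(p),\sigma(q)]$ is not a permutation of a finite set precisely because (C2) says the interior columns escape the row band. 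So this direction needs the two missing lemmas above (or some equivalent quantitative analysis of the permitted $1$-placements in the box) before it is complete.
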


\begin{proof}
First of all, suppose $\sigma \lessdot_\text{Bruhat} \sigma \circ (p , q)$. By Proposition \ref{prop: transposition going up in bruhat} we have $\sigma(p) < \sigma(q)$, so (C1) is satisfied. 

To show (C2), suppose that $p < \ell < q$ and that $\sigma(p) < \sigma(\ell) < \sigma(q)$. We claim that $\sigma <_\text{Bruhat} \sigma \circ (p , \ell) <_\text{Bruhat} \sigma \circ (p , q)$. The first inequality holds by Proposition \ref{prop: transposition going up in bruhat}. On the other hand, we have that $\sigma \circ (p , q) = \sigma \circ (p , \ell) \circ (\ell , q) \circ (p , \ell)$. Note that $[\sigma \circ (p , \ell)](\ell) = \sigma(p) < \sigma(q) = [\sigma \circ (p , \ell)](q)$, therefore by Proposition \ref{prop: transposition going up in bruhat} we have $\sigma \circ (p , \ell) <_\text{Bruhat} \sigma \circ (p , \ell) \circ (\ell , q)$. Similarly we have that $[\sigma \circ (p , \ell) \circ (\ell , q)](p) = \sigma(\ell) < \sigma(q) = [\sigma \circ (p , \ell) \circ (\ell , q)](\ell)$, so again by Proposition \ref{prop: transposition going up in bruhat} we have $\sigma \circ (p , \ell) \circ (\ell , q) <_\text{Bruhat} \sigma \circ (p , \ell) \circ (\ell , q) \circ (p , \ell) = \sigma \circ (p , q)$. So indeed we have $\sigma <_\text{Bruhat} \sigma \circ (p , \ell) <_\text{Bruhat} \sigma \circ (p , q)$ contradicting that $\sigma \lessdot_\text{Bruhat} \sigma \circ (p , q)$. Therefore (C2) is satisfied as well.

Now suppose $(p , q)$ satisfies (C1) and (C2). By Proposition \ref{prop: transposition going up in bruhat} we have $\sigma <_\text{Bruhat} \sigma \circ (p , q)$. Suppose that for some $\tau \in S_\infty$ we have $\sigma \leq_\text{Bruhat} \tau \leq_\text{Bruhat} \sigma \circ (p , q)$. By Proposition \ref{prop: region criterion for bruhat order}, for all $a , b \in \nn$ we have $r_{a , b}(\sigma) \geq r_{a , b}(\tau) \geq r_{a , b}(\sigma \circ (p , q))$. However by Lemma \ref{lem: ones in boxes after applying a transposition}, if $(\ast)$ $b < p$, $b \geq q$, $a < \sigma(p)$, or $a \geq \sigma(q)$ then $r_{a , b}(\sigma) = r_{ a , b}(\sigma \circ (p , q))$, implying that $r_{a , b}(\sigma) = r_{a , b}(\tau) = r_{ a , b}(\sigma \circ (p , q))$, and if $(\ast \ast)$ $\sigma(p) \leq a < \sigma(q)$ and $p \leq b < q$, then $r_{a , b}(\sigma) = r_{a , b}(\sigma \circ (p , q)) + 1$, so either $r_{a , b}(\tau) = r_{a , b}(\sigma)$ or $r_{a , b}(\tau) = r_{ a , b}(\sigma \circ (p , q))$.

First, we claim that $(\ast \ast \ast)$ $\sigma_{i , j} = \tau_{i , j}$ in the following (not disjoint) regions: (A) $i \in \nn$ and $j < p$, (B) $i < \sigma
(p)$ and $j \in \nn$, (C) $\sigma(q) < i$ and $j \in \nn$, (D) $i \in \nn$ and $q < j$. In any of the cases, since $\nn \times \nn$ is well-founded in the product order, there is a minimal $(i , j)$ in the relevant region such that $\sigma_{i , j} \neq \tau_{i , j}$. But $(\ast)$ implies that with $a = i$ and $b  = j$ the hypotheses of Lemma \ref{lem: criteria for equal permutation matrices} are satisfied, hence in fact $\sigma_{i , j} = \tau_{i , j}$, contradiction. 

Now we claim that $\sigma(p) \leq \tau(p) \leq \sigma(q)$. Indeed, we compute:  
\begin{equation*}
    r_{<\sigma(p) , < p}(\tau) \overset{(1)}{=} r_{<\sigma(p) , < p}(\sigma) \overset{(2)}{=} r_{<\sigma(p) , p}(\sigma) \overset{(3)}{=} r_{<\sigma(p) , p}(\tau)
\end{equation*}
Here (1) and (3) follow from $(\ast)$ and (2) follows because for all $1 \leq i < \sigma(p)$ we have $\sigma_{i , p} = 0$.  This implies that $\tau_{i , p} = 0$ for all $1 \leq i < \sigma(p)$, i.e. $\sigma(p) \leq \tau(p)$. Similarly we compute: 
\begin{equation*}
    r_{\sigma(q) , <p}(\tau) + 1 \overset{(1)}{=} r_{\sigma(q) , <p}(\sigma) + 1 \overset{(2)}{=} r_{\sigma(q) , p}(\sigma) \overset{(3)}{=} r_{\sigma(q) , p}(\tau)
\end{equation*}
Here again (1) and (3) follow from $(\ast)$ and (2) follows because $\sigma(p) < \sigma(q)$ by (C1) and $\sigma_{i , p} = 0$ unless $i = \sigma(p)$ in which case $\sigma_{i , p} = 1$. Therefore $\tau_{i , p} = 1$ for some $1 \leq i \leq \sigma(q)$, i.e. $\tau(p) \leq \sigma(q)$. Putting these two inequalities together yields the desired claim.  

Next we claim that in fact $\tau(p) \in \{ \sigma(p), \sigma(q) \}$. Suppose, for contradiction, that $\sigma(p) < \tau(p) < \sigma(q)$ and let $j = \sigma^{-1}(\tau(p))$, i.e. $\sigma_{\tau(p) , j} = 1$. Then we cannot have $j < p$, since by $(\ast \ast \ast)$ region (A), we have $\sigma_{i , j} = \tau_{i , j}$ for all $i \in \nn$ and $j < p$ . We also cannot have $j = p$, since $\sigma(p) < \tau(p)$ by hypothesis. Furthermore (C2), together with the fact that $\sigma(p) < \tau(p) < \sigma(q)$, implies that we cannot have $p < j < q$ either. Finally we cannot have $j = q$ since $\sigma(q) > \tau(p)$. Thus it must be that $j > q$. But then by $(\ast \ast \ast)$ region (D) we have that $\tau_{\tau(p) , j} = \sigma_{\tau(p) , j} = 1$, i.e. $\tau(j) = \tau(p)$, contradicting that $\tau$ is a bijection, so the claim holds.

Finally, we claim that $r_{[\sigma(p) , \sigma(q)],[p , q)}(\tau) = 1$. We first break up the region $[1 , \sigma(q)] \times [1 , q)$ into three disjoint reigons $[\sigma(p) , \sigma(q)] \times [1 , p)$, $[1 , \sigma(p)) \times [1 , q)$, and  $[\sigma(p) , \sigma(q)] \times [p , q)$, yielding the following. 
\begin{align*}
    r_{[\sigma(p) , \sigma(q)] , [1 , p)}(\tau) + r_{[1 , \sigma(p)) , [1 , q)}(\tau) + r_{[\sigma(p) , \sigma(q)],[p , q)}(\tau) 
    &= r_{[1 , \sigma(q)],[1 , q)}(\tau) 
    \\&\overset{(1)}{=} r_{[1 , \sigma(q)],[1 , q)}(\sigma) 
    \\&= r_{[\sigma(p) , \sigma(q)] , [1 , p)}(\sigma) + r_{[1 , \sigma(p)) , [1 , q)}(\sigma) + r_{[\sigma(p) , \sigma(q)],[p , q)}(\sigma)
\end{align*}
Here (1) follows from $(\ast)$. Since $\sigma(j) = \tau(j)$ for all $j < p$ we have $r_{[\sigma(p) , \sigma(q)] , [1 , p)}(\sigma) = r_{[\sigma(p) , \sigma(q)] , [1 , p)}(\tau)$. Also we have $r_{[1 , \sigma(p)) , [1 , q)}(\sigma) = r_{[1 , \sigma(p)) , [1 , q)}(\tau)$ again by $(\ast)$. Therefore subtraction yields $r_{[\sigma(p) , \sigma(q)],[p , q)}(\tau) = r_{[\sigma(p) , \sigma(q)],[p , q)}(\sigma)$. By (C2), we know that $r_{[\sigma(p) , \sigma(q)],[p , q)}(\sigma) = 1$, so the claim follows. 

Since $\tau(p) \in \{ \sigma(p) , \sigma(q) \}$, the previous claim implies that $\tau_{i , j} = 0$ for all $p \leq j < q$ and $\sigma(p) \leq i \leq \sigma(q)$ unless $j = p$ and either $i = \sigma(p)$ or $i = \sigma(q)$. 

By $(\ast \ast \ast)$ also we have that $\tau_{i , j} = \sigma_{i , j}$ in regions (A), (B), (C), and (D), so the only $(i , j)$ for which $\tau_{i , j}$ and $\sigma_{i , j}$ can differ is $(i , j) \in \{ (\sigma(p) , p), (\sigma(q), p) , (a , q) \mid \sigma(p) \leq a \leq \sigma(q) \}$, and exactly one of $\tau_{\sigma(p) , p}$ and $\tau_{\sigma(q) , p}$ must be $1$. Therefore $\tau(j) = \sigma(j)$ for all $j \neq p , q$, and so because $\tau$ is a bijection it must be that in the former case, $\tau(q) = \sigma(q)$ and in the latter case $\tau(q) = \sigma(p)$. Thus in the former case $\tau = \sigma$ and in the latter $\tau = \sigma \circ (p , q)$, as desired.
\end{proof}

In the remainder of this section, we give a version of Proposition \ref{prop: cover relations in the bruhat order} that is relative to a given permutation $\omega$. Specifically, given $\sigma <_\text{Bruhat} \omega$, we characterize for which transpositions $(p , q)$ we have that $\sigma \lessdot_\text{Bruhat} \sigma \circ (p , q) \leq_{\text{Bruhat}} \omega$.  We give a name to such a transposition below. 

\begin{definition}
    Let $\sigma , \omega \in S_\infty$ and suppose $\sigma <_\text{Bruhat} \omega$. We say that a transposition $(p , q)$ is a \emph{relative candidate} for the pair $\sigma , \omega$ if 
    \begin{equation*}\label{eq: relative candidate equation}
        \sigma \lessdot_\text{Bruhat} \sigma \circ (p , q) \leq_\text{Bruhat} \omega.
    \end{equation*} 
\end{definition}

\begin{proposition}\label{prop: equivalent characterization of relative candidate}
     Let $\sigma , \omega \in S_\infty$ and suppose $\sigma <_\text{Bruhat} \omega$. Then $(p , q)$ is a relative candidate for $\sigma, \omega$ if and only if the following conditions are satisfied.  
     \begin{enumerate}
        \item[(C1)] $\sigma(p) < \sigma(q)$
        \item[(C2)] For all $p < \ell < q$ we have that either $\sigma(\ell) < \sigma(p)$ or $\sigma(\ell) > \sigma(q)$.
        \item[(C3)] For all $a , b \in \nn$ we have $r_{a , b}(\sigma) \geq r_{a , b}(\omega)$ with the inequality strict if $p \leq b < q$ and $\sigma(p) \leq a < \sigma(q)$. 
    \end{enumerate}
\end{proposition}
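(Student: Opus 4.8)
The plan is to reduce everything to the rank-function criterion for the Bruhat order (Proposition \ref{prop: region criterion for bruhat order}) together with the explicit description of how these ranks change under right-multiplication by a transposition (Lemma \ref{lem: ones in boxes after applying a transposition}), while handing the ``cover'' part of the statement off to Proposition \ref{prop: cover relations in the bruhat order}. Note that (C1) and (C2) are precisely the conditions in Proposition \ref{prop: cover relations in the bruhat order} characterizing $\sigma \lessdot_\text{Bruhat} \sigma \circ (p , q)$. So after invoking that proposition, the only remaining content is the equivalence, under the standing assumption that (C1) and (C2) hold, between $\sigma \circ (p , q) \leq_\text{Bruhat} \omega$ and condition (C3).

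For the forward direction, suppose $(p , q)$ is a relative candidate. Then $\sigma \lessdot_\text{Bruhat} \sigma \circ (p , q)$, so (C1) and (C2) hold by Proposition \ref{prop: cover relations in the bruhat order}; in particular $\sigma(p) < \sigma(q)$, so Lemma \ref{lem: ones in boxes after applying a transposition} applies. Also $\sigma \circ (p , q) \leq_\text{Bruhat} \omega$ gives $r_{a , b}(\sigma \circ (p , q)) \geq r_{a , b}(\omega)$ for all $a , b$ by Proposition \ref{prop: region criterion for bruhat order}. Now split $\nn \times \nn$ into the two regions of Lemma \ref{lem: ones in boxes after applying a transposition}: on the region where $b < p$, $b \geq q$, $a < \sigma(p)$, or $a \geq \sigma(q)$, we have $r_{a , b}(\sigma) = r_{a , b}(\sigma \circ (p , q)) \geq r_{a , b}(\omega)$; on the complementary region $\{ p \leq b < q,\ \sigma(p) \leq a < \sigma(q) \}$ we have $r_{a , b}(\sigma) - 1 = r_{a , b}(\sigma \circ (p , q)) \geq r_{a , b}(\omega)$, hence $r_{a , b}(\sigma) > r_{a , b}(\omega)$. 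Together these are exactly (C3).

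For the converse, assume (C1), (C2), (C3). Conditions (C1) and (C2) give $\sigma \lessdot_\text{Bruhat} \sigma \circ (p , q)$ by Proposition \ref{prop: cover relations in the bruhat order}, and in particular $\sigma(p) < \sigma(q)$, so Lemma \ref{lem: ones in boxes after applying a transposition} applies again. To see $\sigma \circ (p , q) \leq_\text{Bruhat} \omega$ it suffices by Proposition \ref{prop: region criterion for bruhat order} to check $r_{a , b}(\sigma \circ (p , q)) \geq r_{a , b}(\omega)$ for all $a , b$, and we do this on the same two regions: on the first, $r_{a , b}(\sigma \circ (p , q)) = r_{a , b}(\sigma) \geq r_{a , b}(\omega)$ by the non-strict part of (C3); on the second, $r_{a , b}(\sigma \circ (p , q)) = r_{a , b}(\sigma) - 1 \geq r_{a , b}(\omega)$, where we use the strict part of (C3) and the fact that these quantities are integers. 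Hence $(p , q)$ is a relative candidate.

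The argument is essentially bookkeeping, and I do not anticipate a genuine obstacle; the only points requiring care are to observe that the two cases of Lemma \ref{lem: ones in boxes after applying a transposition} actually partition $\nn \times \nn$ (the complement of ``$b < p$ or $b \geq q$ or $a < \sigma(p)$ or $a \geq \sigma(q)$'' is precisely ``$p \leq b < q$ and $\sigma(p) \leq a < \sigma(q)$''), and to convert the ``$\geq$ after subtracting $1$'' condition into the strict inequality of (C3) via integrality.
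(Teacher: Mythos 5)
Your proof is correct and follows essentially the same route as the paper: both invoke Proposition \ref{prop: cover relations in the bruhat order} to dispatch (C1)--(C2), then use Proposition \ref{prop: region criterion for bruhat order} together with the two-region dichotomy of Lemma \ref{lem: ones in boxes after applying a transposition} to settle (C3). Your explicit remarks that the two regions partition $\nn \times \nn$ and that integrality converts strict-to-nonstrict after subtracting $1$ are small clarifying touches that the paper leaves implicit.
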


\begin{proof}
First of all, suppose $(p , q)$ is a relative candidate for $\sigma , \omega$. By definition we have that $\sigma \lessdot_\text{Bruhat} \sigma \circ (p , q)$, so Proposition \ref{prop: cover relations in the bruhat order} implies that (C1) and (C2) are satisfied. 

To show (C3) we note first that since $\sigma <_\text{Bruhat} \omega$, by Proposition \ref{prop: region criterion for bruhat order} we have $r_{a , b}(\sigma) \geq r_{a , b}(\omega)$ for any $a , b \in \nn$. However if $p \leq b < q$ and $\sigma(p) \leq a < \sigma(q)$, then by Lemma \ref{lem: ones in boxes after applying a transposition}, we have $r_{a , b}(\sigma) = r_{a , b}(\sigma \circ (p , q)) + 1$ and since $\sigma \circ (p , q) \leq_\text{Bruhat} \omega$ it follows that $r_{a , b}(\sigma \circ (p , q)) \geq r_{a , b}(\omega)$, therefore $r_{a , b}(\sigma) > r_{a , b}(\omega)$ as desired. 

Now suppose $(p , q)$ satisfies (C1), (C2), and (C3). Again Proposition \ref{prop: cover relations in the bruhat order} implies $\sigma \lessdot_{\text{Bruhat}} \sigma \circ (p , q)$. Furthermore, Lemma \ref{lem: ones in boxes after applying a transposition} implies that if $b < p$, $b \geq q$, $a < \sigma(p)$, or $a \geq \sigma(q)$, then $r_{a , b}(\sigma \circ (p , q)) = r_{a , b}(\sigma)$ and (C3) implies that $r_{a , b}(\sigma) \geq r_{ a , b}(\omega)$, hence $r_{a , b}(\sigma \circ (p , q)) \geq r_{ a , b}(\omega)$. On the other hand, if $p \leq b < q$ and $\sigma(p) \leq a < \sigma(q)$, then again Lemma \ref{lem: ones in boxes after applying a transposition} implies $r_{a , b}(\sigma) = r_{a , b}(\sigma \circ (p , q)) + 1$, but by (C3) we have $r_{a , b}(\sigma) > r_{a , b}(\omega)$, hence $r_{a , b}(\sigma \circ (p , q)) \geq r_{a , b}(\omega)$. Therefore in both cases we have $r_{a , b}(\sigma \circ (p , q)) \geq r_{a , b}(\omega)$, so Proposition \ref{prop: region criterion for bruhat order} implies $\sigma \circ (p , q) \leq_\text{Bruhat} \omega$, and hence $(p , q)$ is a relative candidate for $\sigma, \omega$. 
\end{proof}


\section{Saturated Chains in the Bruhat Order}\label{sec: discrete-saturated Chains in the Bruhat Order}

The goal of this section is to prove that for any pair of permutations $\sigma , \omega \in S_\infty$ with $\sigma <_\text{Bruhat} \omega$, there exists a discrete-saturated chain in the Bruhat order $\sigma = \sigma_0 \lessdot_\text{Bruhat} \sigma_1 \lessdot_\text{Bruhat} \ldots \leq_\text{Bruhat} \omega$ which is either eventually equal to $\omega$ or converges to $\omega$ in the aforementioned topology on $S_\infty$. In fact we show that this sequence can be obtained by repeatedly composing with transpositions of the form $(d , m)$ (see Definition \ref{def: d} and the discussion below), which will be important to us later. A similar result was proved in \cite{gallup2021well}, but the directions of the inequalities there are reversed, so for completeness we prove it here in the direction that will be useful to us. We begin with a key lemma which shows there always exists at least one relative candidate for any pair $\sigma <_\text{Bruhat} \omega$. 

\begin{lemma}\label{lem: d m is a relative candidate}
     Suppose $\sigma, \omega \in S_\infty$ are such that $\sigma <_\text{Bruhat} \omega$. Then $(d(\sigma, \omega) , m(\sigma , \omega))$ is a relative candidate for $\sigma , \omega$.
\end{lemma}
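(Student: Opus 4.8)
The plan is to verify conditions (C1), (C2), and (C3) of Proposition \ref{prop: equivalent characterization of relative candidate} for the transposition $(p,q) = (d, m)$ where $d := d(\sigma, \omega)$ and $m := m(\sigma, \omega)$. Write $f := f(\sigma, \omega) = \sigma^{-1}(\omega(d))$. Recall from the discussion preceding the lemma that $d < m \le f$, that $\sigma(d) < \sigma(m) \le \sigma(f)$, and that minimality of $m$ means for every $\ell$ with $d < \ell < m$ we have either $\sigma(\ell) < \sigma(d)$ or $\sigma(\ell) > \sigma(f) \ge \sigma(m)$. Condition (C1), namely $\sigma(d) < \sigma(m)$, is immediate from this. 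Condition (C2) is also immediate: for $d < \ell < m$, minimality gives $\sigma(\ell) < \sigma(d)$ or $\sigma(\ell) > \sigma(f)$; in the latter case, since $\sigma(m) \le \sigma(f) < \sigma(\ell)$, we get $\sigma(\ell) > \sigma(m)$, so indeed $\sigma(\ell) < \sigma(d) = \sigma(p)$ or $\sigma(\ell) > \sigma(m) = \sigma(q)$.

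The work is in (C3). Since $\sigma <_\text{Bruhat} \omega$, Proposition \ref{prop: region criterion for bruhat order} already gives $r_{a,b}(\sigma) \ge r_{a,b}(\omega)$ for all $a, b$, so I only need the strict inequality when $d \le b < m$ and $\sigma(d) \le a < \sigma(m)$. Fix such $a, b$. The strategy is to show $r_{a,b}(\sigma) > r_{a,b}(\omega)$ by exhibiting that equality cannot hold — specifically I will argue that $r_{\sigma(d), d}(\sigma) > r_{\sigma(d), d}(\omega)$ and then propagate. Concretely: I claim $r_{a, b}(\sigma) \ge r_{\sigma(d), d}(\sigma)$ is compatible but the cleanest route is to directly estimate $r_{a,b}(\omega)$. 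Observe $\sigma(d) = \sigma(p) \le a$, so the box $(\sigma(d), d)$ lies in the region $[1,a] \times [1,b]$, contributing a $1$ to $r_{a,b}(\sigma)$. On the $\omega$ side, for all $\ell < d$ we have $\omega(\ell) = \sigma(\ell)$, and by Lemma \ref{lem: less than bruhat implies values at d less than} we have $\omega(d) > \sigma(d)$; moreover $\omega(d) = \sigma(f)$ by definition of $f$, and since $a < \sigma(m) \le \sigma(f) = \omega(d)$, the column $d$ of $\omega$ contributes nothing to $r_{a,b}(\omega)$ that isn't already accounted for. The key counting step is: the columns $1, \ldots, d-1$ contribute the same to $r_{a,b}(\sigma)$ and $r_{a,b}(\omega)$ (since $\sigma$ and $\omega$ agree there), the column $d$ contributes $1$ to $r_{a,b}(\sigma)$ (via $\sigma(d) \le a$) and $0$ to $r_{a,b}(\omega)$ (since $\omega(d) > a$), and among columns $d+1, \ldots, b$ every $1$ of $\omega$ in rows $\le a$ can be injectively matched to a $1$ of $\sigma$ in rows $\le a$ and columns $\le b$ — this last matching is exactly the content of $r_{>d, \le b \text{ within rows} \le a}$ comparisons, which follow from $\sigma \le_\text{Bruhat} \omega$ applied to appropriately shifted regions, or more simply from the already-known inequality $r_{a, b'}(\sigma) \ge r_{a, b'}(\omega)$ for $b' = b$ combined with the column-$d$ discrepancy. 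I expect the cleanest phrasing is: $r_{a,b}(\sigma) = r_{a,d-1}(\sigma) + [\sigma(d) \le a] + (\text{columns } d{+}1 \text{ to } b) \ge r_{a, d-1}(\omega) + 1 + (\text{something} \ge \text{the }\omega\text{ count})$, forcing strictness.

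Let me restate the (C3) argument more carefully, since this is the main obstacle. Set $a, b$ with $d \le b < m$ and $\sigma(d) \le a < \sigma(m)$. I will show $r_{a,b}(\omega) \le r_{a,b}(\sigma) - 1$. Decompose by columns: $r_{a,b}(\rho) = \sum_{c=1}^{b} [\rho^{-1}\text{-th: row of the 1 in column } c \text{ is } \le a]$. For $c < d$: $\sigma(c) = \omega(c)$, so the contributions agree. For $c = d$: $\sigma(d) \le a$ contributes $1$ to $\sigma$; $\omega(d) = \sigma(f) \ge \sigma(m) > a$ contributes $0$ to $\omega$. For $d < c \le b$ (possible only when $b > d$): I need that the number of $c$ in this range with $\omega(c) \le a$ is at most the number with $\sigma(c) \le a$. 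Here I invoke $\sigma <_\text{Bruhat} \omega$ via Proposition \ref{prop: region criterion for bruhat order} in the form $r_{\ge a+1, b}(\sigma) \le r_{\ge a+1, b}(\omega)$, equivalently (taking complements in each column) that among columns $1, \ldots, b$ the count of those with value $> a$ is weakly larger for $\sigma$ than for $\omega$; subtracting the agreeing columns $1, \ldots, d-1$ and noting column $d$ has $\sigma(d) \le a$ (value not $> a$) while $\omega(d) > a$ (value $> a$), we conclude that among columns $d+1, \ldots, b$ the count with value $> a$ is still weakly larger for $\sigma$ — which is exactly what we need (the complementary count with value $\le a$ is weakly larger for $\omega$, i.e. $\le$ for... wait, that goes the wrong way, so instead I use the non-strict $r_{a,b}(\sigma) \ge r_{a,b}(\omega)$ directly and just observe that the column-$d$ discrepancy already contributes $+1$ in favor of $\sigma$: precisely, $r_{a,b}(\sigma) - r_{a,d-1}(\sigma) \ge 1 + [r_{a,b}(\omega) - r_{a,d-1}(\omega) - 0]$ need not hold termwise, so the honest argument is to note $r_{a,b}(\sigma) \ge r_{a,b}(\omega)$ always, and if equality held then column-by-column the deficits would have to cancel, but column $d$ gives $\sigma$ a strict surplus while no column can give $\omega$ a surplus in a way that... ). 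The robust version: apply Proposition \ref{prop: region criterion for bruhat order}(3) to get $r_{\ge a+1, b}(\sigma) \le r_{\ge a+1,b}(\omega)$; since rows $\ge a+1$ and $\le a$ partition, and $b$ columns total, this is equivalent to $r_{a,b}(\sigma) \ge r_{a,b}(\omega)$, and I then want strictness. Strictness follows because $r_{\ge a+1, b}(\sigma)$ counts columns $c \le b$ with $\sigma(c) > a$: column $d$ is \emph{not} among these (as $\sigma(d) \le a$), so $r_{\ge a+1, b}(\sigma) = r_{\ge a+1, b}^{(c \ne d)}(\sigma)$, whereas $\omega(d) > a$ so column $d$ \emph{is} counted by $r_{\ge a+1, b}(\omega)$, giving $r_{\ge a+1, b}(\omega) \ge 1 + r_{\ge a+1, b}^{(c\ne d)}(\omega)$; and for $c \ne d$ with $c \le b < m$: if $c < d$ then $\sigma(c) = \omega(c)$ so these contribute equally, and if $d < c \le b$ — here I use (C2)/minimality of $m$: such $c$ satisfies $d < c < m$, so $\sigma(c) < \sigma(d) \le a$ or $\sigma(c) > \sigma(m) > a$, and I need to compare with $\omega(c)$... this is getting delicate. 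I will organize it as: prove $r_{a,b}(\sigma) \ge r_{a,b}(\omega) + 1$ by the clean identity $r_{a,b}(\sigma) = r_{a, b}(\sigma) $ and the observation that the submatrix $[\sigma_{ij}]_{i \le a, j \le b}$ contains the $1$ at $(\sigma(d), d)$ while $[\omega_{ij}]_{i \le a, j \le b}$ does not contain a $1$ in column $d$, combined with the fact that deleting column $d$ from both and comparing the resulting rank counts still yields $\ge$ (which follows from $\sigma \le_\text{Bruhat} \omega$ restricted to the region, since removing an equal-or-helpful column preserves the inequality). I expect this column-deletion bookkeeping to be the crux; the rest is mechanical.
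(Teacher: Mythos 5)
Your verification of (C1) and (C2) is correct and matches the paper. The problem is (C3), which you never actually close. You correctly isolate the column-$d$ discrepancy (column $d$ contributes a $1$ to $r_{a,b}(\sigma)$ since $\sigma(d)\le a$, and nothing to $r_{a,b}(\omega)$ since $\omega(d)=\sigma(f)\ge\sigma(m)>a$), but you then need to show that deleting column $d$ from the box $[1,a]\times[1,b]$ leaves $\sigma$'s count $\ge\ \omega$'s count. As you yourself notice, the Bruhat order gives no column-by-column domination, and your final ``clean identity'' claim --- that ``removing an equal-or-helpful column preserves the inequality'' --- is exactly the strict inequality $r_{a,b}(\sigma)-1\ge r_{a,b}(\omega)$ you are trying to prove, so the argument is circular. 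None of your four passes at (C3) gets past this point.

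The paper avoids the column-by-column trap by changing the corner of the box. Instead of comparing $r_{a,b}$ directly, it applies the non-strict Bruhat inequality at the \emph{larger} corner $(\omega(d),b)$, i.e. $r_{\omega(d),b}(\sigma)\ge r_{\omega(d),b}(\omega)$, and then decomposes $[1,\omega(d)]\times[1,b]$ into the three disjoint regions $[1,a]\times[1,b]$, $(a,\omega(d)]\times[1,d)$, and $(a,\omega(d)]\times[d,b]$. The second region contributes equally to both sides (since $\sigma$ and $\omega$ agree on $[1,d)$). In the third region, minimality of $m$ forces $\sigma$ to have \emph{no} ones there (for $d\le c\le b<m$ and $c>d$ one has $\sigma(c)<\sigma(d)\le a$ or $\sigma(c)>\sigma(f)=\omega(d)$; and $\sigma(d)\le a$), while $\omega$ has the entry $(\omega(d),d)$ sitting inside it, so $\omega$ has at least one. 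Subtracting the equal and the $0$-vs-$\ge 1$ pieces from the non-strict inequality at $(\omega(d),b)$ yields the strict inequality $r_{a,b}(\sigma)>r_{a,b}(\omega)$. This region-shift is the idea your proof is missing; once you have it, the bookkeeping you found ``delicate'' becomes a two-line subtraction.
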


\begin{proof}
Let $d = d(\sigma , \omega)$, $m = m(\sigma , \omega)$, and $f = f(\sigma , \omega)$. First of all, (C1) and (C2) follow trivially from the definitions of $d$ and $m$. By hypothesis $\sigma <_\text{Bruhat} \omega$, hence by Proposition \ref{prop: region criterion for bruhat order} we have $r_{a , b}(\sigma) \geq r_{a , b}(\omega)$ for all $a , b \in \nn$. Now suppose that $d \leq b < m$ and $\sigma(d) \leq a < \sigma(m)$. We break up the matrix region $[1 , \omega(d) ] \times [1 , b]$ into the three disjoint regions $[1 , a] \times [1  , b]$, $(a , \omega(d)] \times [1 , d)$, and $(a , \omega(d)] \times [d , b]$ and compute as follows. 
\begin{equation*}
    r_{a , b}(\sigma) + r_{(a , \omega(d)], [1 , d)}(\sigma) + r_{(a , \omega(d)], [d , b]}(\sigma) = r_{\omega(d) , b}(\sigma) \overset{(1)}{\geq} r_{\omega(d) , b}(\omega) = r_{a , b}(\omega) + r_{(a , \omega(d)], [1 , d)}(\omega) + r_{(a , \omega(d)], [d , b]}(\omega)
\end{equation*}
Here (1) follows because $\sigma <_\text{Bruhat} \omega$. However, by definition of $m$, the submatrix $[\sigma_{i , j}]_{\sigma(d) \leq i \leq \omega(d) , d \leq j < m}$ has exactly one $1$ in the $\sigma(d)$th row and the $d$th column and $0$'s elsewhere, therefore $r_{(a , \omega(d)], [d , b]}(\sigma) = 0$. Furthermore, the permutation matrix $[\omega_{i , j}]$ has a $1$ in the $\omega(d)$th row and the $d$th column and therefore $r_{(a , \omega(d)], [d , b]}(\omega) > 0$. Finally, since $\sigma(p) = \omega(p)$ for all $p < d$, we have $r_{(a , \omega(d)], [1 , d]}(\sigma) = r_{(a , \omega(d)], [1 , d]}(\omega)$. Therefore subtraction yields $r_{a , b}(\sigma) \geq r_{a , b}(\omega) + r_{(a , \omega(d)], [d , b]}(\omega) > r_{a , b}(\omega)$, so (C3) is satisfied.
\end{proof}

We now use this lemma to prove the existence of the promised discrete-saturated chains in $S_\infty$ whose cover relations are given by transpositions of the form $(d , m)$. 

\begin{proposition}\label{prop: existence of a discrete-saturated chain}
    Let $\sigma , \omega \in S_\infty$ be such that $\sigma <_\text{Bruhat} \omega$. Let $\sigma_0 = \sigma$ and for $i \geq 1$ inductively define $\sigma_i = \sigma_{i - 1} \circ (d(\sigma_{i  - 1} , \omega) , m(\sigma_{i - 1} , \omega))$ as long as $\sigma_{i - 1} \neq \omega$. If $\sigma_n = \omega$ for some $n$, then $\sigma  = \sigma_0 \lessdot_\text{Bruhat} \sigma_1 \lessdot_\text{Bruhat} \ldots \lessdot_\text{Bruhat} \sigma_{n - 1} \lessdot_\text{Bruhat} \sigma_n = \omega$ is a finite saturated chain. If $\sigma_n \neq \omega$ for all $n \in \nn$, then $\sigma  = \sigma_0 \lessdot_\text{Bruhat} \sigma_1 \lessdot_\text{Bruhat} \ldots \leq_\text{Bruhat} \omega$ is a discrete-saturated chain which converges to $\omega$.
\end{proposition}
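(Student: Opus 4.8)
The plan is to verify the three claims in the statement in turn: first that each step $\sigma_{i-1} \lessdot_\text{Bruhat} \sigma_i$ is genuinely a cover below $\omega$, then that the chain is saturated, and finally that in the infinite case it converges to $\omega$. The first point is immediate from Lemma \ref{lem: d m is a relative candidate}: applied to the pair $\sigma_{i-1} <_\text{Bruhat} \omega$ (which holds inductively as long as $\sigma_{i-1} \neq \omega$, using that $\sigma_{i-1} \circ (d,m) \leq_\text{Bruhat} \omega$ from the previous step), it tells us $(d(\sigma_{i-1},\omega), m(\sigma_{i-1},\omega))$ is a relative candidate, i.e. $\sigma_{i-1} \lessdot_\text{Bruhat} \sigma_i \leq_\text{Bruhat} \omega$. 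So in the terminating case we immediately get the finite saturated chain, and in the non-terminating case we get an infinite chain $\sigma_0 \lessdot \sigma_1 \lessdot \cdots \leq_\text{Bruhat} \omega$ which is discrete-saturated by the remark in Section \ref{sec: Lexicographically Nested-Shellable Posets} that a chain $p_0 \lessdot p_1 \lessdot \cdots$ is automatically discrete-saturated.

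The substantive part is the convergence claim, and this is where I expect the main obstacle to lie. I would track the quantity $d_i := d(\sigma_i, \omega)$, the first coordinate where $\sigma_i$ and $\omega$ disagree. The key observation should be that $d_i$ is weakly increasing in $i$: composing $\sigma_{i-1}$ with $(d_{i-1}, m_{i-1})$ only changes the values at positions $d_{i-1}$ and $m_{i-1} > d_{i-1}$, and moreover by the definition of $f$ and $m$ one arranges that $\sigma_i$ now agrees with $\omega$ at position $d_{i-1}$ (since $\sigma_i(d_{i-1}) = \sigma_{i-1}(m_{i-1})$ and we must check this equals $\omega(d_{i-1})$ — this requires re-examining how $m$ relates to $f = \sigma_{i-1}^{-1}(\omega(d_{i-1}))$; if $m_{i-1} = f_{i-1}$ it is immediate, and if $m_{i-1} < f_{i-1}$ one needs a separate argument or one shows $d_i$ still strictly increases past $d_{i-1}$). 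Thus $d_i \geq d_{i-1} + 1$ whenever $d_i > d_{i-1}$ is forced, and I would argue that $d_i \to \infty$: if $d_i$ were bounded, say $d_i = d$ for all large $i$, then past that point all the transpositions act on positions $\geq d$ with the value at position $d$ never changing, yet each step strictly increases the Bruhat rank within the finite interval $[\sigma_N, \omega]$-type region — but that region is governed by finitely many $r_{a,b}$ values bounded below by $r_{a,b}(\omega)$, so the process must terminate, contradicting non-termination. Hence $d_i \to \infty$.

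Once $d_i \to \infty$, convergence in the topology on $S_\infty$ follows directly: fix $m \in \nn$; choose $N$ with $d_N > m$; then for all $n \geq N$ we have $d_n \geq d_N > m$, so $\sigma_n$ agrees with $\omega$ on all of $[1, d_n) \supseteq [1, m]$, in particular $\sigma_n(m) = \omega(m)$. This is exactly the definition of $\sigma_n \to \omega$. I would also remark that $\sigma_i <_\text{Bruhat} \omega$ for every $i$ (not merely $\leq$), since at each stage $\sigma_i \neq \omega$ by hypothesis, which is what licenses the inductive application of Lemma \ref{lem: d m is a relative candidate}. The one delicate bookkeeping point to get exactly right is the monotonicity of $d_i$ together with the claim that $d_i$ cannot stabilize at a finite value — I would handle the latter by a clean finiteness/descent argument on the finitely many relevant rank statistics rather than by an explicit combinatorial analysis of the transpositions.
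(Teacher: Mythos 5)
Your overall strategy matches the paper's: Lemma \ref{lem: d m is a relative candidate} gives each cover, the first coordinate $d_i := d(\sigma_i,\omega)$ is weakly increasing, and convergence follows once $d_i \to \infty$. The first two stages are fine. The gap is in the non-stabilization step, which you flag as the crux but do not actually carry out, and the sketch you give contains an error.

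First, the claim ``the value at position $d$ never changing'' is false: if $d_i = d$ for all $i \geq N$, then at each step the transposition $(d, m_i)$ is applied, so $\sigma_{i+1}(d) = \sigma_i(m_i) > \sigma_i(d)$; the value at position $d$ strictly increases at every step. Second, the proposed ``finiteness/descent argument on the finitely many relevant rank statistics'' is not justified in the non-terminating case, where $\sigma \not\approx \omega$ and the interval $[\sigma,\omega]$ is infinite. To know that only finitely many $r_{a,b}$ are relevant you would first have to show the transpositions $(d,m_i)$ act in a bounded window, i.e.\ that $f_i := f(\sigma_i,\omega)$ also stabilizes once $d_i$ does --- this is exactly the ``explicit combinatorial analysis of the transpositions'' you say you want to avoid, and it is what the paper does (the paper shows $f_i$ stabilizes, then that $m_i$ stabilizes, and derives a contradiction from the definition of $m$). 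There is no notion of ``Bruhat rank'' available at this point in the paper either; the relative length function and the gradedness of $[\sigma,\omega]^f$ are established only in a later section and depend on this very proposition.

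Worth noting: once you correctly observe that $\sigma_i(d)$ strictly increases when $d_i = d$ is constant, you can close the argument more directly than either your sketch or the paper. Lemma \ref{lem: less than bruhat implies values at d less than} gives $\sigma_i(d) < \omega(d)$ for all $i \geq N$, so the strictly increasing integer sequence $\sigma_N(d) < \sigma_{N+1}(d) < \cdots$ is bounded above by $\omega(d)$, which is impossible. That avoids the rank-statistics machinery entirely and also bypasses the paper's analysis of $f$ and $m$; but your proposal as written does not make this observation and instead rests on the incorrect ``never changing'' claim and an unproved finiteness assertion.
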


\begin{proof}
First of all, according to Lemma \ref{lem: d m is a relative candidate} for all $i$ we have that $(d(\sigma_{i - 1}, \omega), m(\sigma_{i - 1},\omega))$ is a relative candidate for $\sigma_{i - 1},\omega$, so by definition we have $\sigma_{i - 1} \lessdot_\text{Bruhat} \sigma_{i - 1} \circ (d(\sigma_{i - 1}, \omega), m(\sigma_{i - 1},\omega)) = \sigma_i \leq_\text{Bruhat} \omega$. Thus in either case the chain obtained is discrete-saturated, and if $\sigma_n = \omega$ for some $n$, the claim is proved.  

Now suppose that $\sigma_n \neq \omega$ for all $n$. We now show that the sequence $\sigma = \sigma_0, \sigma_1, \sigma_2, \ldots$ converges to $\omega$. In fact we claim that $d(\sigma_0 , \omega)$, $d(\sigma_1 , \omega)$, $d(\sigma_2 , \omega), \ldots$ is a weakly increasing sequence which is not eventually constant. To see that it is weakly increasing, note that for any $i$ we have $\sigma_{i + 1}(\ell) = \sigma_i(\ell) = \omega(\ell)$ for all $\ell < d(\sigma_i , \omega)$, hence $d(\sigma_i , \omega) \leq d(\sigma_{i + 1} , \omega)$. Suppose, for contradiction, that this sequence is eventually constant. First note that if $d(\sigma_{n + 1} , \omega) = d(\sigma_{n}, \omega)$ then $f(\sigma_{n + 1} , \omega) = f(\sigma_{n} , \omega)$. By assumption we have $d(\sigma_\ell , \omega) =  d(\sigma_{n} , \omega)$ for all $\ell \geq n$, hence $f(\sigma_\ell , \omega) = f(\sigma_n , \omega)$ for all $\ell \geq n$. But by the properties of $d$, $m$, and $f$ we have that $d(\sigma_\ell , \omega) < m(\sigma_\ell , \omega) \leq f(\sigma_\ell , \omega)$ for all $\ell \geq n$, so $d(\sigma_n , \omega) < m(\sigma_\ell , \omega) \leq f(\sigma_n, \omega)$ for all $\ell \geq n$. However there are only finitely many integers $m$ which satisfy $d(\sigma_n , \omega) < m \leq f(\sigma_n, \omega)$, hence there exists some $n' \geq n$ such that for all $\ell \geq n'$ we have $m(\sigma_\ell , \omega) = m(\sigma_{n'} , \omega)$. In particular we have $d(\sigma_{n'} , \omega) = d(\sigma_{n' + 1} , \omega)$ and $m(\sigma_{n'} , \omega) = m(\sigma_{n' + 1} , \omega)$. However since $\sigma_{n'} (d(\sigma_{n'} , \omega) ) < \sigma_{n'} (m(\sigma_{n'} , \omega) )$ and $\sigma_{n' + 1} = \sigma_{n'} \circ (d(\sigma_{n'} , \omega) , m(\sigma_{n'} , \omega) )$, we obtain the following. 
\begin{equation*}
    \sigma_{n' + 1}( d(\sigma_{n' + 1} , \omega) ) = \sigma_{n'}( m(\sigma_{n'} , \omega) ) > \sigma_{n'}( d(\sigma_{n'} , \omega) ) = \sigma_{n' + 1}( m(\sigma_{n' + 1} , \omega) )
\end{equation*}
This contradicts the definition of $m(\sigma_{n' + 1} , \omega)$. Therefore, given any $i \in \nn$, there exists some $N \in \nn$ such that for all $n \geq N$ we have $d(\sigma_n , \omega) > i$, which implies $\sigma_n(i) = \omega(i)$ for $n \geq N$. Hence $\sigma = \sigma_0 , \sigma_1, \ldots$ indeed converges to $\omega$ as desired. 
\end{proof}

As a result, we obtain the following complete characterization of cover relations in $S_\infty$. 

\begin{corollary}\label{cor: cover relations in S infinity are given by transpositions}
    Given $\sigma , \omega \in S_\infty$, we have that $\sigma \lessdot_\text{Bruhat} \omega$ if and only if there exists a transposition $(p , q)$ which satisfies (C1), (C2), and $\omega = \sigma \circ (p , q)$. In this case the transposition $(p , q)$ is unique. 
\end{corollary}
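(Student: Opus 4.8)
The plan is to prove both directions of the biconditional separately, and then handle uniqueness. For the reverse direction, suppose $(p,q)$ satisfies (C1), (C2), and $\omega = \sigma \circ (p,q)$. Then $\sigma \lessdot_\text{Bruhat} \sigma \circ (p,q) = \omega$ is immediate from Proposition \ref{prop: cover relations in the bruhat order}, so there is nothing to do here. The content is entirely in the forward direction.

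For the forward direction, assume $\sigma \lessdot_\text{Bruhat} \omega$. In particular $\sigma <_\text{Bruhat} \omega$, so I can invoke Proposition \ref{prop: existence of a discrete-saturated chain} to produce the chain $\sigma = \sigma_0 \lessdot_\text{Bruhat} \sigma_1 \lessdot_\text{Bruhat} \ldots \leq_\text{Bruhat} \omega$ with $\sigma_i = \sigma_{i-1} \circ (d(\sigma_{i-1},\omega), m(\sigma_{i-1},\omega))$. The key observation is that since $\sigma_0 \lessdot_\text{Bruhat} \omega$ is a \emph{cover} relation, there can be no element strictly between $\sigma_0$ and $\omega$ in Bruhat order. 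But $\sigma_0 \leq_\text{Bruhat} \sigma_1 \leq_\text{Bruhat} \omega$, and $\sigma_1 \neq \sigma_0$ (since $(d,m)$ with $d < m$ is a genuine transposition), so we must have $\sigma_1 = \omega$. Hence the chain terminates immediately: $\omega = \sigma_1 = \sigma_0 \circ (d(\sigma_0,\omega), m(\sigma_0,\omega)) = \sigma \circ (p,q)$ where $(p,q) = (d(\sigma,\omega), m(\sigma,\omega))$. Since $\sigma \lessdot_\text{Bruhat} \sigma \circ (p,q)$, Proposition \ref{prop: cover relations in the bruhat order} tells us $(p,q)$ satisfies (C1) and (C2), which is exactly what we need.

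For uniqueness, suppose $(p,q)$ and $(p',q')$ are two transpositions with $p<q$, $p'<q'$, both satisfying (C1) and (C2), and $\sigma \circ (p,q) = \omega = \sigma \circ (p',q')$. Then $\sigma \circ (p,q) = \sigma \circ (p',q')$, and cancelling $\sigma$ on the left (permutations form a group) gives $(p,q) = (p',q')$ as permutations, hence the unordered pairs $\{p,q\}$ and $\{p',q'\}$ coincide, and since we always write transpositions with the smaller entry first, $(p,q) = (p',q')$ as ordered pairs. This part is entirely routine.

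I do not expect a significant obstacle here: the corollary is essentially a bookkeeping consequence of Proposition \ref{prop: existence of a discrete-saturated chain} combined with the definition of a cover relation. The one point requiring a moment's care is confirming that $\sigma_1 \neq \sigma_0$ — i.e., that $(d(\sigma,\omega), m(\sigma,\omega))$ really is a nontrivial transposition — but this is guaranteed by the discussion preceding Lemma \ref{lem: d m is a relative candidate}, which establishes $d(\sigma,\omega) < m(\sigma,\omega)$. If anything, the subtlety worth flagging for the reader is that the argument secretly rules out the ``converging but never reaching $\omega$'' alternative in Proposition \ref{prop: existence of a discrete-saturated chain}: a cover relation forces the chain to have length exactly one, so the infinite-convergent case cannot occur when $\sigma \lessdot_\text{Bruhat} \omega$.
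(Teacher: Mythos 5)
Your proof is correct and follows essentially the same route as the paper: invoke Proposition \ref{prop: existence of a discrete-saturated chain} to produce a discrete-saturated chain starting at $\sigma$, note that the cover condition $\sigma \lessdot_\text{Bruhat} \omega$ forces $\sigma_1 = \omega$, apply Proposition \ref{prop: cover relations in the bruhat order} for (C1)--(C2), and derive uniqueness from $(p,q) = \sigma^{-1}\circ\omega$. Your left-cancellation argument for uniqueness is the same observation the paper makes.
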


\begin{proof}
    If $\sigma \lessdot_\text{Bruhat} \omega$, Proposition \ref{prop: existence of a discrete-saturated chain} implies in particular that there exists a discrete-saturated chain $\sigma \lessdot_\text{Bruhat} \sigma_1 \lessdot_\text{Bruhat} \ldots \leq_\text{Bruhat} \omega$ with each cover relation given by a transposition (which must satisfy (C1) and (C2) by Proposition \ref{prop: cover relations in the bruhat order}). Since $\sigma \lessdot_\text{Bruhat} \omega$ is already a cover relation, it must be that $\sigma_1 = \omega$, so the claim is proven. Proposition \ref{prop: cover relations in the bruhat order} immediately gives the reverse implication. 
    To see uniqueness, note that $(p , q) = \sigma^{-1} \circ \omega$. 
\end{proof}


\section{An $\nn$-Grading of Intervals in $S_\infty$}\label{sec: An N Grading of S infty}

This section is dedicated to defining the subsets $[\sigma , \omega]^f$ of $S_\infty$ and showing that they are $\nn$-graded. Recall that the \emph{length} of a permutation $\sigma \in S_\infty^f$ is the number $\ell(\sigma)$ of inversions of $\sigma$, i.e. the size of the set $\{(i , j) \in \nn \times \nn \mid i < j \text{ and } \sigma(i) > \sigma(j)\}$. The poset $S_n$ with the Bruhat order is graded with rank equal to length (see, e.g., \cite{bjorner2005combinatorics}). 

Given $\sigma, \nu \in S_\infty$ we say that $\sigma$ and $\nu$ are \emph{eventually equal} if $\sigma(n) \neq \nu(n)$ for only finitely many $n \in \nn$, and in this case we write $\sigma \approx \nu$. Note that ``$\approx$'' is an equivalence relation on $S_\infty$. 

\begin{definition}
    Given any $\sigma <_\text{Bruhat} \omega$, we define 
\begin{equation*}
    [\sigma , \omega]^f = \{ \nu \in S_\infty \mid \sigma \leq_\text{Bruhat} \nu \leq_\text{Bruhat} \omega \text{ and } \nu \approx \sigma \}.
\end{equation*}
\end{definition}

We wish to show that $[\sigma , \omega]^f$ is $\nn$-graded, but we need a notion of length of a permutation that makes sense in this relative setting to give us a rank function. Of course if $\sigma \in S_\infty \smallsetminus S^f_\infty$ then $\sigma$ will have infinitely many inversions, so the classical length of most elements in $S_\infty$ is infinite. However the following analog of length does make sense for all permutations.

\begin{definition}
The \emph{pseudo-length} of $\sigma \in S_\infty$ is the vector $\ell_\bullet(\sigma) = (\ell_1(\sigma), \ell_2(\sigma), \ldots) \in \nn^{\zz_{\geq 0}}$ defined by setting $\ell_i(\sigma) = \sum_{n = 1}^i |D_n(\sigma)|$, where we define $D_i(\sigma)$ to be the set $\{ j \in \nn \mid i < j, \sigma(i) > \sigma(j) \}$ of \emph{$i$th inversions of $\sigma$}. In other words $\ell_{i}(\sigma)$ is the number of $n$th inversions for $n \leq i$. 
\end{definition}

Notice that $\ell_i(\sigma)$ is always a finite number because there are only finitely many elements of $\nn$ smaller than $\sigma(i)$ for any $i \in \nn$.

\begin{example}
Consider the permutation $\theta = [2 , 1 , 4 , 3 , 6 , 5 , \ldots]$ and $\rho = [3 , 1 , 5 , 2 , 7 , 4, 9 , 6 , \ldots ]$ from Examples \ref{ex: first perm} and \ref{ex: two infinite permutations}. We have that $|D_i(\theta)| = 1$ if $i$ is odd and $0$ if $i$ is even, and hence $\ell_\bullet(\theta) = (1, 1 , 2 , 2 , 3 , 3 , \ldots)$. Similarly $|D_i(\rho)| = 2$ if $i$ is odd and $0$ if $i$ is even, so $\ell_\bullet(\rho) = (2, 2 , 4 , 4 , 6 , 6 , \ldots)$. 
\end{example}

Note also that if $\sigma \in S_\infty^f$, then $|D_i(\sigma)| = 0$ for all but finitely many $i$, and hence $\ell_i(\sigma) = \ell(\sigma)$ for all but finitely many $i$. In fact this implication can be reversed.

\begin{proposition}\label{prop: eventually constant ell implies in f}
    Given $\sigma \in S_\infty$, we have that $\sigma \in S_\infty^f$ if and only if there exists $n \in \nn$ such that $\ell_i(\sigma) = n$ for all but finitely many $i$. In this case, $\ell(\sigma) = n$. 
\end{proposition}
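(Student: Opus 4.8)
The plan is to treat the two implications separately; the forward implication is essentially the observation recorded just before the statement, while the reverse rests on a small combinatorial lemma about bijections of $\nn$.

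For $(\Rightarrow)$: if $\sigma \in S_\infty^f$ then $\sigma$ fixes every integer larger than some $N$, so for $i > N$ and $j > i$ we have $\sigma(i) = i < j = \sigma(j)$, whence $D_i(\sigma) = \emptyset$ and $\ell_i(\sigma) = \ell_N(\sigma)$ for all $i \geq N$. Since the set of inversions of $\sigma$ is the disjoint union $\bigsqcup_i \{i\} \times D_i(\sigma)$, we get $\ell(\sigma) = \sum_{i \geq 1} |D_i(\sigma)| = \ell_N(\sigma)$, so the statement holds with $n = \ell(\sigma)$.

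For $(\Leftarrow)$: suppose $\ell_i(\sigma) = n$ for all but finitely many $i$. Because $\ell_i(\sigma) = \sum_{m=1}^i |D_m(\sigma)|$ is nondecreasing in $i$, in fact $\ell_i(\sigma) = n$ for all $i \geq N$ for some $N$, and then $|D_i(\sigma)| = \ell_i(\sigma) - \ell_{i-1}(\sigma) = 0$ for every $i > N$. Unwinding the definition of $D_i(\sigma)$, this says precisely that $\sigma$ is strictly increasing on $\{N+1, N+2, \ldots\}$. The key step is to show this forces $\sigma \in S_\infty^f$: since $\sigma$ is a bijection, its restriction to $\{N+1, N+2, \ldots\}$ is the unique order-isomorphism onto the cofinite set $C = \nn \setminus \{\sigma(1), \ldots, \sigma(N)\}$, whose complement has exactly $N$ elements; a short gap-counting argument (bounding $|\{1, \ldots, m\} \setminus C|$ by $N$, with equality once $m \geq M := \max\{\sigma(1), \ldots, \sigma(N)\}$) shows the $k$-th smallest element of $C$ equals $k + N$ for all large $k$, so $\sigma(j) = j$ for all sufficiently large $j$ and thus $\sigma \in S_\infty^f$. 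Once this is known, the already-proved direction gives that $\ell_i(\sigma)$ is eventually equal to $\ell(\sigma)$; as it is also eventually equal to $n$, we conclude $\ell(\sigma) = n$.

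The only genuine obstacle is that gap-counting lemma — that a bijection of $\nn$ which is eventually strictly increasing must be eventually the identity — and even that is routine; every other step follows immediately from the definitions and elementary manipulation of the sequences $\ell_\bullet$.
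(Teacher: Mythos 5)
Your proof is correct, and both directions match the paper's approach: the forward implication is the observation immediately preceding the statement, while the reverse implication in both cases reduces the hypothesis to $\sigma$ being eventually strictly increasing and then counts that only finitely many values can be "missing" from the tail. The only cosmetic difference is that you package the final step as an order-isomorphism onto the cofinite set $C = \nn \setminus \{\sigma(1),\ldots,\sigma(N)\}$, whereas the paper argues directly with the one-line notation, but the gap-counting idea is the same.
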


\begin{proof}
As already remarked, the forward implication is obvious. For the reverse implication, by hypothesis, there exists some $i_0$ such that $\ell_{i}(\sigma) = n$ for all $i \geq i_0$. This implies that $D_i(\sigma) = \emptyset$ for all $i \geq i_0$, which by definition means that $\sigma(i) > \sigma(i_0)$ for all $i > i_0$. But this implies that if $\sigma(j) < \sigma(i_0)$, then $j < i_0$, so there are $\sigma(i_0) - 1$ numbers which must appear to the left of $i_0$ in the one-line notation of $\sigma$, hence $i_0 \geq \sigma(i_0)$.

For any $n \in \nn$, we have $\sigma(i_0) < \ldots < \sigma(i_0 + (n - 1)) < \sigma(i_0 + n)$ and if $\sigma(i_0 + n) > \sigma(i_0 + (n - 1)) + 1$ then since $\sigma(i_0 + (n - 1)) + 1 > \sigma(i_0 + (n - 1)) > \ldots > \sigma(i_0)$, $\sigma(i_0 + (n - 1)) + 1$ must appear before the $i_0$ position in the one-line notation of $\sigma$. Since there are only finitely many such positions before $i_0$, there must exist some $i_1 \geq i_0$ such that $\sigma(i_1 + n) = \sigma(i_1) + n$ for all $n \in \nn$, i.e. such that the one-line notation of $\sigma$ consists of consecutive numbers after $\sigma(i_1)$. We claim that in fact $\sigma(i_1) = i_1$. Indeed, since the numbers after $\sigma(i_1)$ in one-line notation are consecutive, the numbers $1 , \ldots, \sigma(i_1) - 1$ must appear before $\sigma(i_1)$, and these are the only possible numbers that can appear before $\sigma(i_1)$, so the claim follows. Since $\sigma(i_1 + n) = \sigma(i_1) + n$ for all $n \in \nn$, it therefore follows that $\sigma(i) = i$ for all $i \geq i_1$ as well, so $\sigma \in S_\infty^f$. 
\end{proof}

We now use pseudo-length to give a notion of length in $S_\infty$ relative to a given permutation. Note that for any $\sigma, \nu \in S_\infty$ that are eventually equal, there exists some $i_0 \in \nn$ such that $D_i(\sigma) = D_i(\nu)$ for all $i \geq i_0$. Therefore the difference $\ell_i(\nu) - \ell_i(\sigma)$ stabilizes as $i \to \infty$, i.e. there exists some $n \in \zz_{\geq 0}$ such that $\ell_i(\nu) - \ell_i(\sigma) = n$ for all $i \geq i_0$. 

\begin{definition}
    If $\sigma , \nu \in S_\infty$ are such that $\sigma <_\text{Bruhat} \nu$ and $\sigma \approx \nu$, the \emph{length of $\nu$ relative to $\sigma$} is defined to be $\ell_\sigma(\nu) = \lim_{ i \to \infty} \ell_i(\nu) - \ell_i(\sigma)$. 
\end{definition}

Note that if $\sigma, \nu \in S_\infty^f$, then there is some $i_0 \in \nn$ such that if $i \geq i_0$ then $\ell_i(\sigma) = \ell(\sigma)$ and $\ell_i(\nu) = \ell(\nu)$, and therefore $\ell_i(\nu) - \ell_i(\sigma) = \ell(\nu) - \ell(\sigma)$, hence relative length in $S_\infty^f$ is just given by $\ell_\sigma(\nu) = \ell(\nu) - \ell(\sigma)$, as one would expect. 

\begin{remark}
Note that two elements $\sigma, \nu \in S_\infty$ are eventually equal if and only if $\sigma^{-1} \circ \nu \in S^f_\infty$. Indeed, we have that $\sigma \approx \nu$ if and only if there exists some $N \in \nn$ such that if $n \geq N$, $\sigma(n) = \nu(n)$, which is true if and only if for all $n \geq N$ we have $(\sigma^{-1} \circ \nu)(n) = n$, which is true if and only if $\sigma^{-1} \circ \nu \in S_\infty^f$. 
Since the usual length of a permutation is defined for all elements of $S_\infty^f$, one might be tempted to define $\ell_\sigma(\omega) = \ell(\sigma^{-1} \circ \omega)$, however this is not even true for permutations in $S_n$. For example $[1,3,2] \lessdot_\text{Bruhat} [2,3,1]$ in $S_3$ so the relative length is $\ell_{[1,3,2]}([2,3,1]) = \ell([2,3,1]) - \ell([1,3,2]) = 1$, but we have $[1,3,2] \circ (1,3) = [2 , 3 , 1]$, hence $\ell([1,3,2]^{-1} \circ [2,3,1]) = \ell( (1 , 3) ) = 3$.  
\end{remark}

We now show that relative length is additive.

\begin{lemma}\label{lem: relative length is additive}
    If $\sigma , \mu , \nu \in S_\infty$ are all eventually equal, and satisfy $\sigma \leq_\text{Bruhat} \mu \leq_\text{Bruhat} \nu$, then $\ell_\sigma(\nu) = \ell_\sigma(\mu) + \ell_\mu(\nu)$.
\end{lemma}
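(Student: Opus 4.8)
The plan is to reduce the statement to an elementary identity among partial sums and then pass to the limit. First I would note that since ``$\approx$'' is an equivalence relation, the hypothesis that $\sigma, \mu, \nu$ are all eventually equal yields $\sigma \approx \mu$, $\mu \approx \nu$, and $\sigma \approx \nu$; together with the chain $\sigma \leq_\text{Bruhat} \mu \leq_\text{Bruhat} \nu$ this is precisely what is needed for each of $\ell_\sigma(\mu)$, $\ell_\mu(\nu)$, and $\ell_\sigma(\nu)$ to be defined (adopting the convention $\ell_\tau(\tau) = 0$ when one of the inequalities is an equality, in which case the claim is immediate).

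Next I would fix $N \in \nn$ large enough that $\sigma(n) = \mu(n) = \nu(n)$ for all $n \geq N$, which exists because any two of the three permutations disagree in only finitely many places. Then for every index $i \geq N$ and every $j > i$, all of $\sigma(i), \mu(i), \nu(i)$ agree and all of $\sigma(j), \mu(j), \nu(j)$ agree, so the sets of $i$th inversions satisfy $D_i(\sigma) = D_i(\mu) = D_i(\nu)$ for all $i \geq N$. Consequently each of the differences $\ell_i(\mu) - \ell_i(\sigma)$, $\ell_i(\nu) - \ell_i(\mu)$, and $\ell_i(\nu) - \ell_i(\sigma)$ is constant for $i \geq N$, and by definition these constants are exactly $\ell_\sigma(\mu)$, $\ell_\mu(\nu)$, and $\ell_\sigma(\nu)$ respectively.

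Finally, for every $i$ (in particular for any $i \geq N$) one has the tautological identity
\[
\ell_i(\nu) - \ell_i(\sigma) = \bigl(\ell_i(\nu) - \ell_i(\mu)\bigr) + \bigl(\ell_i(\mu) - \ell_i(\sigma)\bigr),
\]
and evaluating at any such $i$ gives $\ell_\sigma(\nu) = \ell_\mu(\nu) + \ell_\sigma(\mu)$, which is the asserted additivity. I do not expect a genuine obstacle in this argument; the only points requiring care are verifying that all three relative lengths are defined — which is exactly what transitivity of ``$\approx$'' provides — and checking that the stabilization index $N$ can be chosen simultaneously for the three permutations, which holds since each pair differs in only finitely many positions.
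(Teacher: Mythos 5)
Your proof is correct and takes essentially the same route as the paper: both fix a common index past which $\sigma$, $\mu$, $\nu$ agree (hence the pseudo-lengths' differences have already stabilized to the relative lengths) and then evaluate the telescoping identity $\ell_i(\nu) - \ell_i(\sigma) = (\ell_i(\nu) - \ell_i(\mu)) + (\ell_i(\mu) - \ell_i(\sigma))$ at that index. The only difference is that you spell out the inversion-set equalities $D_i(\sigma) = D_i(\mu) = D_i(\nu)$, which the paper had already recorded in the discussion preceding the definition of relative length.
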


\begin{proof}
    By hypothesis there exists some $i_0 \in \nn$ such that if $i \geq i_0$ then $\sigma(i) = \mu(i) = \nu(i)$. Hence we have $\ell_\sigma(\mu) = \ell_{i_0}(\mu) - \ell_{i_0}(\sigma)$, $\ell_\mu(\nu) = \ell_{i_0}(\nu) - \ell_{i_0}(\mu)$, and $\ell_\sigma(\nu) = \ell_{i_0}(\nu) - \ell_{i_0}(\sigma)$. Adding the first two equations together yields $\ell_\sigma(\nu) = \ell_\sigma(\mu) + \ell_\mu(\nu)$. 
\end{proof}

We can use this to show that like in $S_n$, cover relations in the Bruhat order on $S_\infty$ increase relative length by $1$.

\begin{lemma}\label{lem: cover relations increase relative length by one}
    Given $\sigma, \nu \in S_\infty$, if $\sigma \lessdot_\text{Bruhat} \nu$, then $\ell_\sigma(\nu) = 1$.  
\end{lemma}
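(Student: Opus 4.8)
The plan is to use the classification of cover relations (Corollary \ref{cor: cover relations in S infinity are given by transpositions}) to replace $\nu$ by $\sigma \circ (p,q)$ and then compute the pseudo-length difference directly. First I would observe that since $\sigma \lessdot_\text{Bruhat} \nu$, there is a (unique) transposition $(p,q)$ with $p < q$ satisfying (C1) $\sigma(p) < \sigma(q)$ and (C2) such that $\nu = \sigma \circ (p,q)$. Thus $\nu$ and $\sigma$ agree at every position except $p$ and $q$, so $\sigma \approx \nu$ and $\ell_\sigma(\nu)$ is defined. Moreover $D_n(\nu) = D_n(\sigma)$ for every $n > q$, because $\nu(n) = \sigma(n)$ and $\nu(j) = \sigma(j)$ for all $j > n > q$. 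Consequently, for every $i \geq q$ we have
\[
\ell_i(\nu) - \ell_i(\sigma) = \sum_{n=1}^{i}\bigl(|D_n(\nu)| - |D_n(\sigma)|\bigr) = \sum_{n=1}^{q}\bigl(|D_n(\nu)| - |D_n(\sigma)|\bigr),
\]
and letting $i \to \infty$ gives $\ell_\sigma(\nu) = \sum_{n=1}^{q}\bigl(|D_n(\nu)| - |D_n(\sigma)|\bigr)$.

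The second step is to evaluate the summand for each range of $n$. For $n < p$ we have $\nu(n) = \sigma(n)$, and among the arguments $j > n$ only $p$ and $q$ can contribute differently to $D_n(\sigma)$ versus $D_n(\nu)$; but checking where $\sigma(n)$ falls relative to $\sigma(p) < \sigma(q)$ shows that the unordered pair $\{p,q\}$ contributes the same number of inversions to $D_n(\sigma)$ and to $D_n(\nu)$ ($0$, $1$, or $2$ according as $\sigma(n) < \sigma(p)$, $\sigma(p) < \sigma(n) < \sigma(q)$, or $\sigma(n) > \sigma(q)$), so the summand vanishes. For $p < n < q$ we again have $\nu(n) = \sigma(n)$, and now only the argument $j = q$ can behave differently; here (C2) forces $\sigma(n) < \sigma(p)$ or $\sigma(n) > \sigma(q)$, and in the former case $q$ belongs to neither $D_n(\sigma)$ nor $D_n(\nu)$ while in the latter case it belongs to both, so the summand again vanishes.

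The only surviving contributions are at $n = p$ and $n = q$. Write $k = \#\{\, j > q : \sigma(p) < \sigma(j) < \sigma(q)\,\}$ (a finite number). At $n = p$: the argument $q$ lies in $D_p(\nu)$ but not in $D_p(\sigma)$ by (C1); each $j$ with $p < j < q$ lies in $D_p(\sigma)$ exactly when it lies in $D_p(\nu)$, by (C2); and among the arguments $j > q$ (where $\nu(j) = \sigma(j)$) the subset contributing to $D_p(\sigma)$, namely $\{j > q : \sigma(p) > \sigma(j)\}$, is contained in the subset contributing to $D_p(\nu)$, namely $\{j > q : \sigma(q) > \sigma(j)\}$, with difference of size exactly $k$ (using that $\sigma$ is injective). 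Hence $|D_p(\nu)| - |D_p(\sigma)| = 1 + k$. At $n = q$: since $\nu(q) = \sigma(p)$ and $\nu(j) = \sigma(j)$ for $j > q$, we get $D_q(\nu) = \{j > q : \sigma(p) > \sigma(j)\} \subseteq \{j > q : \sigma(q) > \sigma(j)\} = D_q(\sigma)$, so $|D_q(\nu)| - |D_q(\sigma)| = -k$. Summing the contributions over $n = 1, \ldots, q$ yields $\ell_\sigma(\nu) = 0 + (1 + k) + 0 + (-k) = 1$.

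I expect the main obstacle to be the bookkeeping of the arguments $j > q$ at the two positions $n = p$ and $n = q$: one has to recognize that the values $\sigma(j)$ lying strictly between $\sigma(p)$ and $\sigma(q)$ produce a surplus of exactly $k$ inversions at position $p$ that is cancelled by a deficit of exactly $k$ at position $q$, and making this precise uses injectivity of $\sigma$ to exclude boundary cases. The vanishing at the intermediate positions $p < n < q$ is exactly where condition (C2) is used; the rest is routine.
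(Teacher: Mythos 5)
Your proof is correct. Both your argument and the paper's are direct case analyses on the inversions of $\sigma$ versus $\nu = \sigma \circ (p,q)$, but they organize the bookkeeping differently. The paper builds an explicit bijection between inversions $(i,j)$ of $\sigma$ with $i \leq q+1$ and those of $\nu$, \emph{excluding} $(p,q)$: in particular its Cases 4 and 6 match the inversion $(p,j)$ of $\sigma$ (for $j>q$) with the inversion $(q,j)$ of $\nu$ and vice versa, so no counting of surplus/deficit is needed --- the bijection is exact and the total excess is the single new inversion $(p,q)$. Your version instead fixes a first index $n$ and computes $|D_n(\nu)| - |D_n(\sigma)|$ column by column, finding a surplus $1+k$ at $n=p$ and a deficit $k$ at $n=q$, where $k = \#\{j > q : \sigma(p) < \sigma(j) < \sigma(q)\}$; the paper's $(p,j)\leftrightarrow(q,j)$ matching is exactly what makes these two quantities equal, so the cancellation you observe is the shadow of that bijection. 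Either route is fine; the paper's is marginally slicker because it never needs to name $k$ or invoke injectivity to turn a non-strict inequality into a strict one, but yours has the virtue of being a fully mechanical computation of each summand in $\ell_\sigma(\nu) = \sum_{n=1}^{q}\bigl(|D_n(\nu)| - |D_n(\sigma)|\bigr)$. One small note: your reduction to the finite sum implicitly uses that $D_n(\nu) = D_n(\sigma)$ for all $n > q$, which you justify correctly; the paper states the same thing by taking $i_0 = q+1$ in the definition of $\ell_\sigma$.
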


\begin{proof}
    By Corollary \ref{cor: cover relations in S infinity are given by transpositions} there exists some transposition $(p , q)$ satisfying (C1) and (C2) such that $\nu = \sigma \circ (p , q)$. Let $i_0 = q + 1$ and note that $\nu(i) = \sigma(i)$ for all $i \geq i_0$, therefore $\ell_\sigma(\nu) = \ell_{i_0}(\nu) - \ell_{i_0}(\sigma)$. 

    Suppose $i < j$ with $i \leq i_0$. We consider several cases. (\emph{Case 1}). If $i , j \notin \{ p , q \}$, then clearly $(i , j)$ is an inversion of $\nu$ if and only if it is an inversion of $\sigma$. (\emph{Case 2}). If $i < p$ and $j \in \{ p , q \}$, then $(i , p)$ is an inversion of $\sigma$ if and only if $(i , q)$ is an inversion of $\nu$ and $(i , p)$ is an inversion of $\nu$ if and only if $(i , q)$ is an inversion of $\sigma$. (\emph{Case 3}). Suppose $i = p$ and $j < q$. If $(p , j)$ is an inversion of $\sigma$, then since $\sigma(p) < \sigma(q)$, $(p , j)$ is an inversion of $\nu$. Conversely, if $(p , j)$ is an inversion of $\nu$, then $\sigma(q) > \sigma(j)$, so by (C2) we must also have $\sigma(p) > \sigma(j)$, hence $(p , j)$ is an inversion of $\sigma$ too. (\emph{Case 4}). If $i = p$ and $q < j$, then $(p , j)$ is an inversion of $\sigma$ if and only if $(q , j)$ is an inversion of $\nu$. (\emph{Case 5}). Suppose $p < i$ and $j = q$. If $( i , q)$ is an inversion of $\sigma$, then $\sigma(i) < \sigma(q)$, so by (C2), $\sigma(i) < \sigma(p)$, hence $(i , q)$ is an inversion of $\nu$ as well. Conversely, if $(i , q)$ is an inversion of $\nu$ then $\sigma(i) < \sigma(p) < \sigma(q)$, so $(i , q)$ is an inversion of $\sigma$. (\emph{Case 6}). Finally if $i = q$, then $(q , j)$ is an inversion of $\sigma$ if and only if $(p , j)$ is an inversion of $\nu$. Then, since $(p , q)$ is an inversion of $\nu$ but not of $\sigma$, the result follows.
\end{proof}

We now describe the intervals $[\sigma , \omega]^f$ when $\sigma \approx \omega$. 

\begin{proposition}\label{prop: ee implies intervals are equal}
    Given any $\sigma , \omega \in S_\infty$ with $\sigma <_\text{Bruhat} \omega$ and $\sigma \approx \omega$, we have that $[\sigma , \omega]^f = [\sigma , \omega]$.
\end{proposition}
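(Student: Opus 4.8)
The plan is to prove the nontrivial inclusion $[\sigma,\omega] \subseteq [\sigma,\omega]^f$, since $[\sigma,\omega]^f \subseteq [\sigma,\omega]$ is immediate from the definitions. So I would fix $\nu \in S_\infty$ with $\sigma \leq_\text{Bruhat} \nu \leq_\text{Bruhat} \omega$ and show $\nu \approx \sigma$, i.e.\ that $\nu(n) = \sigma(n)$ for all but finitely many $n$; this places $\nu$ in $[\sigma,\omega]^f$.

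First I would use $\sigma \approx \omega$ to pick $N \in \nn$ with $\sigma(n) = \omega(n)$ for all $n \geq N$. The key observation is that then $\{\sigma(1),\ldots,\sigma(n)\} = \{\omega(1),\ldots,\omega(n)\}$ for every $n \geq N$: indeed $\{\sigma(1),\ldots,\sigma(N-1)\}$ is the complement in $\nn$ of $\{\sigma(k) \mid k \geq N\}$, which equals the complement of $\{\omega(k) \mid k \geq N\}$, namely $\{\omega(1),\ldots,\omega(N-1)\}$, because $\sigma$ and $\omega$ are bijections agreeing on all $k \geq N$; and for $N \leq k \leq n$ the entries $\sigma(k)$ and $\omega(k)$ coincide, so the two $n$-element sets agree. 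Next I would feed this into the definition of $\leq_\text{Bruhat}$ on $S_\infty$: from $\sigma \leq_\text{Bruhat} \nu \leq_\text{Bruhat} \omega$ we get, for each $n$, that $\{\sigma(1),\ldots,\sigma(n)\} \leq \{\nu(1),\ldots,\nu(n)\} \leq \{\omega(1),\ldots,\omega(n)\}$ in the componentwise order on $n$-element subsets of $\nn$. For $n \geq N$ the outer two sets are equal, and since that relation is a partial order this forces $\{\nu(1),\ldots,\nu(n)\} = \{\sigma(1),\ldots,\sigma(n)\}$ for all $n \geq N$. Finally, for $n \geq N+1$, injectivity of $\sigma$ and of $\nu$ gives $\{\nu(n)\} = \{\nu(1),\ldots,\nu(n)\} \setminus \{\nu(1),\ldots,\nu(n-1)\} = \{\sigma(1),\ldots,\sigma(n)\} \setminus \{\sigma(1),\ldots,\sigma(n-1)\} = \{\sigma(n)\}$, so $\nu(n) = \sigma(n)$ for all $n \geq N+1$ and hence $\nu \approx \sigma$.

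The argument is essentially bookkeeping; the only point requiring a little care is the first observation — that eventual agreement of $\sigma$ and $\omega$ upgrades to agreement of all the initial value sets $\{\sigma(1),\ldots,\sigma(n)\}$ for $n \geq N$ — which uses that a bijection's tail determines its head. One could instead run everything through permutation matrices: the matrices of $\sigma$ and $\omega$ differ only inside the fixed finite box $[1,M] \times [1,N-1]$ where $M = \max\{\sigma(1),\ldots,\sigma(N-1)\}$, so $r_{a,b}(\sigma) = r_{a,b}(\omega)$ whenever $a \geq M$ and $b \geq N-1$; squeezing via Proposition \ref{prop: region criterion for bruhat order} gives $r_{a,b}(\nu) = r_{a,b}(\sigma)$ there, Lemma \ref{lem: criteria for equal permutation matrices} then yields $\nu_{a,b} = \sigma_{a,b}$ for $a \geq M+1$ and $b \geq N$, and a column-sum count (for $n$ large enough that $\sigma(n) > M$, which holds for all but finitely many $n$) upgrades this to $\nu(n) = \sigma(n)$. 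But the subset formulation above is cleaner.
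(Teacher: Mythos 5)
Your proof is correct and takes essentially the same route as the paper: both use the subset (tableau) characterization of $\leq_\text{Bruhat}$, show the initial-segment sets $\{\sigma(1),\ldots,\sigma(n)\}$ and $\{\omega(1),\ldots,\omega(n)\}$ agree for all large $n$, squeeze $\{\nu(1),\ldots,\nu(n)\}$ by antisymmetry, and then recover the individual values $\nu(n)=\sigma(n)$ from set differences. Your write-up is if anything slightly more careful, since you justify the step $\{\sigma(1),\ldots,\sigma(N-1)\}=\{\omega(1),\ldots,\omega(N-1)\}$ via the complement argument (``a bijection's tail determines its head''), which the paper simply asserts.
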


\begin{proof}
    The inclusion ``$\subseteq$'' holds by definition. For the reverse inclusion, let $\nu \in [\sigma , \omega]$. By hypothesis, $\sigma$ and $\omega$ are eventually equal, thus there exists some $i_0 \in \nn$ such that $\sigma(i) = \omega(i)$ for all $i \geq i_0$. This implies that the sets $\{ \sigma(1) , \ldots, \sigma(i_0  - 1) \}$ and $\{ \omega(1) , \ldots, \omega(i_0  - 1) \}$ must be equal. Since $\sigma <_\text{Bruhat} \nu <_\text{Bruhat} \omega$, by definition $\{ \sigma(1) , \ldots, \sigma(i_0  - 1) \} \leq \{ \nu(1) , \ldots, \nu(i_0  - 1) \} \leq \{ \omega(1) , \ldots, \omega(i_0  - 1) \}$. But since the first and last sets are equal, the middle set must be equal to the first (and last) as well. The same result is also clearly true if $i_0$ is replaced by $i_0 + n$ for any $n \in \nn$. Hence by induction on $n$, it must be that $\sigma(i_0 + n) = \nu(i_0 + n) = \omega(i_0 + n)$ for all $n \in \nn$. But this implies that $\sigma(i) = \nu(i) = \omega(i)$ for all $i \geq i_0$. Hence $\nu \approx \sigma$, so $\nu \in [\sigma , \omega]^f$. 
\end{proof}

We immediately obtain that for any $\mu , \nu \in [\sigma, \omega]^f$, the interval $[\mu , \nu]_{[\sigma, \omega]^f}$ of elements between them in the poset $[\sigma, \omega]^f$ is actually equal to the full interval $[\mu , \nu] = [\mu , \nu]_{S_\infty}$ between them in the poset $S_\infty$. For clarity we keep the subscript on ``$[\mu , \nu]_{S_\infty}$'' throughout the corollary.  

\begin{corollary}\label{cor: big interval contained in ee interval}
    Given any $\sigma , \omega \in S_\infty$ with $\sigma <_\text{Bruhat} \omega$, and any $\mu , \nu \in [\sigma , \omega]^f$ with $\mu <_\text{Bruhat} \nu$, we have that $[\mu , \nu]_{S_\infty} = [\mu , \nu]_{[\sigma , \omega]^f}$. 
\end{corollary}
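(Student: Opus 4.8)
The plan is to prove the two inclusions separately, since one direction is trivial and the other reduces immediately to Proposition \ref{prop: ee implies intervals are equal}. The inclusion $[\mu , \nu]_{[\sigma , \omega]^f} \subseteq [\mu , \nu]_{S_\infty}$ is immediate from the definitions: any element of $[\mu , \nu]_{[\sigma , \omega]^f}$ lies in $[\sigma , \omega]^f \subseteq S_\infty$ and is sandwiched between $\mu$ and $\nu$ in the Bruhat order, hence lies in $[\mu , \nu]_{S_\infty}$.

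For the reverse inclusion, I would take $p \in [\mu , \nu]_{S_\infty}$, so $\mu \leq_\text{Bruhat} p \leq_\text{Bruhat} \nu$, and check the two defining conditions for membership in $[\sigma , \omega]^f$. First, since $\mu , \nu \in [\sigma , \omega]^f$ we have $\sigma \leq_\text{Bruhat} \mu$ and $\nu \leq_\text{Bruhat} \omega$, so transitivity of the Bruhat order gives $\sigma \leq_\text{Bruhat} p \leq_\text{Bruhat} \omega$. Second, I need $p \approx \sigma$; granting this, $p \in [\sigma , \omega]^f$ and, being between $\mu$ and $\nu$, it lies in $[\mu , \nu]_{[\sigma , \omega]^f}$, completing the argument.

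The key step is establishing $p \approx \sigma$. Because $\mu , \nu \in [\sigma , \omega]^f$ we have $\mu \approx \sigma$ and $\nu \approx \sigma$, and since $\approx$ is an equivalence relation this yields $\mu \approx \nu$. As $\mu <_\text{Bruhat} \nu$, Proposition \ref{prop: ee implies intervals are equal} applied to the pair $(\mu , \nu)$ in place of $(\sigma , \omega)$ gives $[\mu , \nu]^f = [\mu , \nu]$ as subsets of $S_\infty$; unwinding the definition of $[\mu , \nu]^f$, this says exactly that every element of $[\mu , \nu]_{S_\infty}$ is eventually equal to $\mu$. In particular $p \approx \mu$, and combining with $\mu \approx \sigma$ we get $p \approx \sigma$, as required.

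There is essentially no obstacle here: the statement is a direct corollary of Proposition \ref{prop: ee implies intervals are equal}, and the only point requiring (minimal) care is verifying that the hypotheses of that proposition hold for the pair $(\mu , \nu)$ — namely $\mu <_\text{Bruhat} \nu$, which is given, and $\mu \approx \nu$, which follows by transitivity of $\approx$ through $\sigma$.
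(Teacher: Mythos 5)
Your proof is correct and takes essentially the same route as the paper: both reduce the nontrivial inclusion to Proposition \ref{prop: ee implies intervals are equal} applied to the pair $(\mu, \nu)$, after noting by transitivity of $\approx$ that $\mu \approx \nu$ and that any element of $[\mu,\nu]_{S_\infty}$ is sandwiched between $\sigma$ and $\omega$ in the Bruhat order.
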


\begin{proof}
    The inclusion ``$\supseteq$'' is obvious. For the reverse inclusion, let $\tau \in [\mu , \nu]_{S_\infty}$. By definition both $\mu$ and $\nu$ are eventually equal to $\sigma$, and hence are eventually equal to each other. Therefore, by Proposition \ref{prop: ee implies intervals are equal}, $[\mu , \nu]_{S_\infty} = [\mu , \nu]^f$, so $\tau \in [\mu , \nu]^f$ which implies $\tau$ is eventually equal to $\mu$ and therefore to $\omega$. Since $\sigma \leq_\text{Bruhat} \mu \leq_\text{Bruhat} \tau \leq_\text{Bruhat} \nu \leq_\text{Bruhat} \omega$ it follows that $\tau \in [\mu , \nu]_{[\sigma , \omega]^f}$.
\end{proof}

Using the previous proposition and corollary, we can now show that $[\sigma , \omega]^f$ is interval-finite, and this will help us prove that it is $\nn$-graded. 

\begin{corollary}\label{cor: delta f in S infinity is interval-finite}
    Given any $\sigma , \omega \in S_\infty$ with $\sigma <_\text{Bruhat} \omega$, the poset $([\sigma , \omega]^f, <_\text{Bruhat})$ is interval-finite. 
\end{corollary}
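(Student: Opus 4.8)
The plan is to reduce everything to the two results just established and then a finite count. First I would fix $\mu, \nu \in [\sigma, \omega]^f$ with $\mu \leq_\text{Bruhat} \nu$; the case $\mu = \nu$ is trivial, so assume $\mu <_\text{Bruhat} \nu$. By Corollary \ref{cor: big interval contained in ee interval} we have $[\mu, \nu]_{[\sigma, \omega]^f} = [\mu, \nu]_{S_\infty}$, so it suffices to prove that the interval $[\mu, \nu]_{S_\infty}$ in the full poset $S_\infty$ is finite.

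Next I would exploit eventual equality. Since $\mu$ and $\nu$ both lie in $[\sigma, \omega]^f$, both are eventually equal to $\sigma$, hence $\mu \approx \nu$; choose $N \in \nn$ with $\mu(i) = \nu(i)$ for all $i \geq N$. The crucial point is that this single index $N$ controls \emph{every} element of the interval: re-running the inductive argument from the proof of Proposition \ref{prop: ee implies intervals are equal} (with the pair $(\sigma, \omega)$ there replaced by the pair $(\mu, \nu)$) shows that any $\tau$ with $\mu \leq_\text{Bruhat} \tau \leq_\text{Bruhat} \nu$ must satisfy $\tau(i) = \mu(i) = \nu(i)$ for all $i \geq N$. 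The mechanism is that $\{\mu(1), \ldots, \mu(i)\} = \{\nu(1), \ldots, \nu(i)\}$ for $i = N - 1$ (they are complements of the common set $\mu(\{N, N+1, \ldots\})$) and hence for all larger $i$, so the Bruhat sandwich $\{\mu(1), \ldots, \mu(i)\} \leq \{\tau(1), \ldots, \tau(i)\} \leq \{\nu(1), \ldots, \nu(i)\}$ squeezes the middle set to equality, forcing $\tau$ to agree with $\mu$ past position $N$.

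Finally I would conclude by counting: such a $\tau$ is completely determined by the finite list $(\tau(1), \ldots, \tau(N-1))$, which must be an ordering of the fixed $(N-1)$-element set $\{\mu(1), \ldots, \mu(N-1)\}$. Hence $[\mu, \nu]_{S_\infty}$ has at most $(N-1)!$ elements and in particular is finite, which gives the corollary.

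I do not anticipate a genuine obstacle here; the only subtlety requiring care is that I must extract from the \emph{proof} of Proposition \ref{prop: ee implies intervals are equal} — not merely its statement — the fact that the stabilization index $N$ can be taken uniform over the whole interval $[\mu, \nu]$. Everything after that is a routine finiteness count, and the two preceding corollaries do the rest of the work.
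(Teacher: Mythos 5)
Your proof is correct and follows essentially the same route as the paper: reduce to $[\mu,\nu]_{S_\infty}$ via Corollary \ref{cor: big interval contained in ee interval}, use eventual equality of $\mu$ and $\nu$ together with the Bruhat-order sandwich on initial segments to force every $\tau$ in the interval to agree with $\mu$ past a fixed index $N$, and bound the count by $(N-1)!$. The paper inlines the same inductive squeeze rather than citing the proof of Proposition \ref{prop: ee implies intervals are equal}, but the argument is identical.
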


\begin{proof}
    Suppose $\mu , \nu \in [\sigma , \omega]^f$ are such that $\mu <_\text{Bruhat} \nu$. By Corollary \ref{cor: big interval contained in ee interval} we have that $[\mu , \nu]_{S_\infty} = [\mu , \nu]_{[\sigma , \omega]^f}$, hence it suffices to show that the former is finite. Let $\tau \in [\mu , \nu]_{S_\infty}$. Since $\mu$ and $\nu$ are both eventually equal to $\sigma$, they are eventually equal to each other. Thus there exists some $i_0 \in \nn$ such that $\mu(i) = \nu(i)$ for all $i \geq i_0$. This implies that the sets $\{ \mu(1) , \ldots, \mu(i_0  - 1) \}$ and $\{ \nu(1) , \ldots, \nu(i_0  - 1) \}$ must be equal. Since $\mu <_\text{Bruhat} \tau <_\text{Bruhat} \nu$, by definition of the Bruhat order, we must have $\{ \mu(1) , \ldots, \mu(i_0  - 1) \} \leq \{ \tau(1) , \ldots, \tau(i_0  - 1) \} \leq \{ \nu(1) , \ldots, \nu(i_0  - 1) \}$. But since the first and last sets are equal, the middle set must be equal to the first (and last) as well. The same result is also clearly true if $i_0$ is replaced by $i_0 + n$ for any $n \in \nn$. Hence by induction on $n$, it must be that $\tau(i_0 + n) = \mu(i_0 + n) = \nu(i_0 + n)$ for all $n \in \nn$. But this implies that $\tau(i) = \mu(i) = \nu(i)$ for all $i \geq i_0$. Since $\tau \in [\mu , \nu]_{S_\infty}$ was arbitrary, there are at most $(i_0  - 1)!$ elements of $[\mu , \nu]_{S_\infty}$. 
\end{proof}

In Proposition \ref{prop: existence of a discrete-saturated chain} we showed that given $\sigma <_\text{Bruhat} \omega$, there exists a discrete-saturated starting at $\sigma$ which is either eventually equal to $\omega$ or converges to $\omega$. Using that $[\sigma , \omega]^f$ is interval-finite, we can now describe when each of these occurs. 

\begin{proposition}\label{prop: when a discrete-saturated chain is finite}
    Let $\sigma , \omega \in S_\infty$ be such that $\sigma <_\text{Bruhat} \omega$. Then $\sigma \approx \omega$ if and only if there exists a finite saturated chain $\sigma  = \sigma_0 \lessdot_\text{Bruhat} \sigma_1 \lessdot_\text{Bruhat} \ldots \sigma_{n - 1} \lessdot_\text{Bruhat} \sigma_n = \omega$.
\end{proposition}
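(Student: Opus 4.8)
The plan is to prove the equivalence directly, treating the two implications separately; in both directions the essential input is Corollary \ref{cor: cover relations in S infinity are given by transpositions}, that cover relations in the infinite Bruhat order are realized by composition with a single transposition.

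\emph{The ``if'' direction.} Suppose $\sigma = \sigma_0 \lessdot_\text{Bruhat} \sigma_1 \lessdot_\text{Bruhat} \ldots \lessdot_\text{Bruhat} \sigma_n = \omega$ is a finite saturated chain. By Corollary \ref{cor: cover relations in S infinity are given by transpositions}, for each $i$ there is a transposition $(p_i, q_i)$ with $\sigma_i = \sigma_{i-1} \circ (p_i, q_i)$. As noted at the start of Section \ref{sec: Cover Relations in the Bruhat Order}, composing on the right with $(p_i,q_i)$ alters the one-line notation only in positions $p_i$ and $q_i$, so $\sigma_i$ and $\sigma_{i-1}$ agree outside the two-element set $\{p_i, q_i\}$. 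Inducting on $i$, the permutations $\sigma = \sigma_0$ and $\omega = \sigma_n$ agree outside the finite set $\bigcup_{i=1}^n \{p_i, q_i\}$, i.e. $\sigma \approx \omega$. (Equivalently, $\sigma^{-1} \circ \omega = (p_1, q_1) \circ \cdots \circ (p_n, q_n) \in S_\infty^f$, and one invokes the remark preceding Lemma \ref{lem: relative length is additive} that $\sigma \approx \omega$ if and only if $\sigma^{-1} \circ \omega \in S_\infty^f$.)

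\emph{The ``only if'' direction.} Suppose $\sigma \approx \omega$. Then both $\sigma$ and $\omega$ lie in $[\sigma, \omega]^f$, since the Bruhat inequalities are immediate and $\sigma \approx \sigma$, $\omega \approx \sigma$ hold (the latter by hypothesis). By Proposition \ref{prop: ee implies intervals are equal}, $[\sigma, \omega]_{S_\infty} = [\sigma, \omega]^f$, and this coincides with the interval $[\sigma, \omega]_{[\sigma, \omega]^f}$ taken inside the poset $[\sigma, \omega]^f$, which is finite by Corollary \ref{cor: delta f in S infinity is interval-finite}. Now apply Proposition \ref{prop: existence of a discrete-saturated chain} to obtain a discrete-saturated chain $\sigma = \sigma_0 \lessdot_\text{Bruhat} \sigma_1 \lessdot_\text{Bruhat} \ldots \leq_\text{Bruhat} \omega$ which is either eventually equal to $\omega$ or converges to it; in either case every $\sigma_i$ belongs to $[\sigma, \omega]_{S_\infty}$. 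Since that set is finite it cannot contain an infinite chain, so the constructed chain must be of the first type, i.e. $\sigma_n = \omega$ for some $n$, and Proposition \ref{prop: existence of a discrete-saturated chain} then delivers the desired finite saturated chain $\sigma = \sigma_0 \lessdot_\text{Bruhat} \ldots \lessdot_\text{Bruhat} \sigma_n = \omega$.

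\emph{Where the work lies.} There is really no obstacle here: the ``if'' direction is immediate from the description of cover relations, and the ``only if'' direction only needs finiteness of $[\sigma, \omega]_{S_\infty}$ when $\sigma \approx \omega$, which was already established in the previous section. The one point requiring a moment's care is verifying that $\sigma$ and $\omega$ themselves lie in $[\sigma, \omega]^f$, so that the earlier structural results apply.
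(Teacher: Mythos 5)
Your argument is correct and matches the paper's proof in all essential respects: the ``if'' direction uses Corollary \ref{cor: cover relations in S infinity are given by transpositions} to conclude $\sigma \approx \omega$, and the ``only if'' direction combines Proposition \ref{prop: ee implies intervals are equal}, Corollary \ref{cor: delta f in S infinity is interval-finite}, and Proposition \ref{prop: existence of a discrete-saturated chain} to force the constructed chain to be finite. The only cosmetic difference is that the paper phrases the second half as a proof by contradiction while you argue it directly.
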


\begin{proof}
If such a discrete-saturated chain exists, by Corollary \ref{cor: cover relations in S infinity are given by transpositions} each cover relation is given by composing with a transposition, hence $\omega$ can be obtained from $\sigma$ by composing with finitely many transpositions, so clearly $\sigma \approx \omega$.
Conversely, suppose $\sigma \approx \omega$ and such a finite chain does not exist. Then by Proposition \ref{prop: existence of a discrete-saturated chain} there exists an infinite discrete-saturated chain $\sigma  = \sigma_0 \lessdot_\text{Bruhat} \sigma_1 \lessdot_\text{Bruhat} \ldots \leq_\text{Bruhat} \omega$ which converges to $\omega$. However since $\sigma$ and $\omega$ are eventually equal, by Proposition \ref{prop: ee implies intervals are equal} we have that $[\sigma , \omega]^f = [\sigma , \omega]$. This interval is finite by Corollary \ref{cor: delta f in S infinity is interval-finite}, but also contains $\sigma_n$ for all $n \in \nn$, which are all distinct, contradiction. 
\end{proof}

We are now able to show that, like $S_\infty^f$, $[\sigma , \omega]^f$ with the Bruhat order is $\nn$-graded, with rank function given by length relative to $\sigma$. 

\begin{proposition}\label{prop: bruhat interval is graded}
    Suppose $\sigma , \omega \in S_\infty$ are such that $\sigma <_\text{Bruhat} \omega$. The poset $([\sigma , \omega]^f, <_\text{Bruhat})$ is $\nn$-graded and the corresponding rank function is the relative length function $\ell_\sigma$.   
\end{proposition}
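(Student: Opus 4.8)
The plan is to verify the three conditions (G1), (G2), (G3) of Definition \ref{def: graded} for the poset $P = ([\sigma , \omega]^f , <_\text{Bruhat})$ and simultaneously observe that the rank function they produce is $\ell_\sigma$. Essentially all the substance has already been established in the preceding results, so the proof will mostly assemble them. For (G1), every $\nu \in [\sigma , \omega]^f$ satisfies $\sigma \leq_\text{Bruhat} \nu$ by definition, and $\sigma \approx \sigma$ so $\sigma \in [\sigma , \omega]^f$; hence $\hat{0} := \sigma$ is the unique minimal element.

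For (G2), I would fix $\nu \in [\sigma , \omega]^f$, noting that the case $\nu = \sigma$ is trivial. If $\sigma <_\text{Bruhat} \nu$, then $\sigma \approx \nu$ (as $\nu \approx \sigma$), so Proposition \ref{prop: when a discrete-saturated chain is finite} yields a finite saturated chain $\sigma = \sigma_0 \lessdot_\text{Bruhat} \sigma_1 \lessdot_\text{Bruhat} \cdots \lessdot_\text{Bruhat} \sigma_n = \nu$ in $S_\infty$. The one subtlety is that this chain must live inside $[\sigma , \omega]^f$ and remain saturated there: each $\sigma_i$ satisfies $\sigma \leq_\text{Bruhat} \sigma_i \leq_\text{Bruhat} \nu \leq_\text{Bruhat} \omega$, and by Corollary \ref{cor: cover relations in S infinity are given by transpositions} each step is composition with a transposition, so $\sigma_i$ differs from $\sigma$ in only finitely many positions, i.e.\ $\sigma_i \approx \sigma$ and thus $\sigma_i \in [\sigma , \omega]^f$; since passing to a subposet can only remove elements strictly between two given ones, each $\sigma_{i-1} \lessdot_\text{Bruhat} \sigma_i$ is still a cover relation in $[\sigma , \omega]^f$.

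For (G3) and the identification of the rank function, I would invoke Corollary \ref{cor: delta f in S infinity is interval-finite} to get that $[\sigma , \nu]$ is finite (and by Corollary \ref{cor: big interval contained in ee interval} it coincides with the interval computed in $S_\infty$), so every maximal chain in $[\hat{0} , \nu] = [\sigma , \nu]$ is a finite chain of cover relations $\sigma = \mu_0 \lessdot_\text{Bruhat} \cdots \lessdot_\text{Bruhat} \mu_k = \nu$. All the $\mu_i$ lie in $[\sigma , \omega]^f$ and are therefore eventually equal to one another, so iterating Lemma \ref{lem: relative length is additive} gives $\ell_\sigma(\nu) = \sum_{i=1}^{k} \ell_{\mu_{i-1}}(\mu_i)$, and Lemma \ref{lem: cover relations increase relative length by one} says each summand is $1$, whence $k = \ell_\sigma(\nu)$ independently of the chosen maximal chain. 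This is exactly (G3), and the resulting rank function is $\rho(\nu) = \ell_\sigma(\nu)$.

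I do not expect a genuine obstacle here: the only place requiring care is (G2), namely ensuring the finite saturated chain furnished by Proposition \ref{prop: when a discrete-saturated chain is finite} actually stays inside $[\sigma , \omega]^f$ and remains saturated there, which is handled by Corollary \ref{cor: cover relations in S infinity are given by transpositions} together with the elementary subposet observation above.
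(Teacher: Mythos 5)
Your proof is correct and follows essentially the same route as the paper's: (G1) is immediate, (G2) comes from the finite saturated chains supplied by Propositions \ref{prop: existence of a discrete-saturated chain} and \ref{prop: when a discrete-saturated chain is finite}, and (G3) follows from interval-finiteness (Corollary \ref{cor: delta f in S infinity is interval-finite}) together with Lemmas \ref{lem: relative length is additive} and \ref{lem: cover relations increase relative length by one}, which also pins down the rank function as $\ell_\sigma$. The only cosmetic difference is that for (G2) the paper cites Corollary \ref{cor: big interval contained in ee interval} to place the chain inside $[\sigma,\omega]^f$, whereas you re-derive that containment and the preservation of saturation by an elementary transposition argument; both are fine.
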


\begin{proof}
Clearly $\sigma$ is the unique minimal element of $[\sigma , \omega]^f$, so (G1) is satisfied. Given any $\nu \in [\sigma , \omega]^f$, by Propositions \ref{prop: existence of a discrete-saturated chain} and \ref{prop: when a discrete-saturated chain is finite} there exists a finite saturated chain starting at $\sigma$ and ending at $\nu$ in $S_\infty$. This chain is also in $[\sigma , \omega]^f$ by Corollary \ref{cor: big interval contained in ee interval}, so (G2) is also satisfied. Because $[\sigma , \omega]^f$ is interval-finite by Corollary \ref{cor: delta f in S infinity is interval-finite}, all maximal chains of $[\sigma , \nu]$ must be finite, i.e. of the form $\sigma = \sigma_0 \lessdot_{\text{Bruhat}} \sigma_1 \lessdot_\text{Bruhat} \ldots \lessdot_\text{Bruhat} \sigma_m = \nu$. By Lemmas \ref{lem: relative length is additive} and \ref{lem: cover relations increase relative length by one}, we have that $m = \ell_\sigma(\nu)$. Therefore all such maximal chains have the same length, so (G3) is satisfied too.
\end{proof}

We conclude this section with a discussion of maximal and discrete-saturated chains in various subsets of $S_\infty$. This content is not needed for our main result, but is perhaps of interest. As remarked above, in any poset, maximal chains are automatically saturated, but of course the converse does not hold in general. However in (finite) graded posets, saturated chains which include the minimal and maximal elements are maximal chains. A sort of analog of this observation holds for $\nn$-graded posets as well, as the following lemma shows. 

\begin{lemma}\label{lem: discrete-saturated chains are maximal in graded posets}
    If $(P, <)$ is an $\nn$-graded poset, then every discrete-saturated chain $C$ which includes the minimal element $\hat{0}$ is maximal.
\end{lemma}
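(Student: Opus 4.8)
The plan is to argue by contradiction. Write the chain as $C \colon p_0 \lessdot p_1 \lessdot \cdots$; since $\hat{0}$ is the global minimum of $P$ and $C$ is order-isomorphic to a (finite or countably infinite) initial segment of $\nn$, necessarily $p_0 = \hat{0}$. Suppose $C$ is not maximal, so $C$ is properly contained in some chain $C'$, and fix $q \in C' \smallsetminus C$. Because $C'$ is a chain, $q$ is comparable to every $p_i$, and $q \notin C$ gives $q \neq p_i$ for all $i$; together with $\hat{0} = p_0 \leq q$ this forces $p_0 < q$.

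The first key step is the dichotomy that $q$ must lie strictly \emph{above} every element of $C$. If not, then since each $p_i$ is comparable to $q$, there is a least index $k$ with $p_k \not< q$; then $k \geq 1$ (as $p_0 < q$), minimality of $k$ gives $p_{k-1} < q$, and the fact that $p_k$ is comparable to $q$ with $p_k \neq q$ and $p_k \not< q$ forces $p_k > q$. Hence $p_{k-1} < q < p_k$, contradicting $p_{k-1} \lessdot p_k$. So $p_i < q$ for every $i$; in particular $\{p_0, p_1, \dots\} \subseteq [\hat{0}, q]$.

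The substantive step, which I expect to be the main obstacle, is ruling out such a strict upper bound $q$ using only (G1)--(G3); the delicacy is that $[\hat{0}, q]$ need not be interval-finite, so ranks cannot simply be ``read off'' and one must argue via maximal chains. When $C$ is infinite, let $\ell = \rho(q)$, which is a finite integer by (G2) and (G3). The truncation $p_0 \lessdot p_1 \lessdot \cdots \lessdot p_{\ell+1}$ is a saturated chain of $\ell + 2$ elements, all lying in $[\hat{0}, q]$; extend it by Zorn's lemma to a maximal chain $M$ of $[\hat{0}, q]$. Then $M$ has at least $\ell + 2$ elements, hence length at least $\ell + 1$, which contradicts (G3), since every maximal chain of $[\hat{0}, q]$ has length exactly $\rho(q) = \ell$. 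Therefore no such $q$ exists and $C$ is maximal. When instead $C \colon p_0 \lessdot \cdots \lessdot p_n$ is finite, the same dichotomy leaves only the possibility $q > p_n$; since $p_0 \lessdot \cdots \lessdot p_n$ is already a maximal chain of $[\hat{0}, p_n]$ (any extra element would sit strictly between two consecutive $p_i$'s, by the argument of the previous paragraph applied inside $[\hat{0},p_n]$), this case comes down to the elementary fact that a saturated chain from $\hat{0}$ to $p_n$ is maximal in $P$ precisely when $p_n$ is a maximal element of $P$ — which holds in the situations where the lemma is applied; for instance, in $[\sigma, \omega]^f$ with $\sigma \approx \omega$ every finite discrete-saturated chain from $\sigma$ terminates at the maximum $\omega$ by Proposition~\ref{prop: when a discrete-saturated chain is finite}.
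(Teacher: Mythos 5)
Your infinite-case argument matches the paper's approach: both establish the dichotomy that any extra comparable element $q$ must lie strictly above every $p_i$ via the same minimal-index argument, and then derive a contradiction with (G3). Your version is slightly more careful than the paper's: you explicitly truncate to a saturated chain of $\ell + 2$ elements inside $[\hat 0, q]$ and invoke Zorn to extend it to a maximal chain, whereas the paper simply asserts ``there is a maximal chain in $[\hat 0, p]$ with infinitely many elements,'' which itself requires a (glossed-over) Zorn step since $\{p_0, p_1, \ldots\}$ need not be maximal in $[\hat 0, p]$ on its face.

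Your concern about the finite case is well-founded, and in fact the lemma as literally stated is false. Take $P = \{0 < 1 < 2\}$ with the total order; this is $\nn$-graded (G1--G3 are immediate), and $C = \{0, 1\}$ is a discrete-saturated chain containing $\hat 0 = 0$ that is not maximal. The paper's proof is silent on this case because it implicitly treats $C$ as infinite -- note the set $\{p, p_i \mid 0 \leq i < \infty\}$ and the conclusion ``a maximal chain $\ldots$ with infinitely many elements,'' neither of which applies to a finite $C$. Your proposed resolution, appealing to the fact that in the applications of the lemma (intervals $[\sigma,\omega]^f$ with $\sigma \approx \omega$) every finite discrete-saturated chain from $\sigma$ terminates at the maximum, is a correct diagnosis but is not a proof of the lemma as stated; it establishes the conclusion only under hypotheses not present in the statement. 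The clean repair is to restrict the lemma to infinite discrete-saturated chains (which is all that is needed, since the lemma is invoked only on an explicitly infinite chain), or alternatively to add a hypothesis such as ``$P$ has no maximal element.''
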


\begin{proof}
    Let the chain be denoted by $C: \hat{0} = p_0 \lessdot p_1 \lessdot \ldots$ so that if $C$ is not maximal, then there exists some $p \in P \smallsetminus C$ such that $\{ p, p_i \mid 0 \leq i < \infty \}$ is a chain. We must have $p_0 < p$ since $p_0 = \hat{0}$ is the unique minimal element of $P$ by (G1). Suppose $p < p_i$ for some $i \in \nn$. We may assume that $i$ is minimal with this property, so that $p \not< p_{i - 1}$. Since $C$ is a chain, it must be that $p > p_{i - 1}$, which implies $p_{i - 1} < p < p_i$ contradicting that $p_{i - 1} \lessdot p_i$ is a cover. Thus we must have $p_i < p$ for all $i$, so there is a maximal chain in $[\hat{0} , p]$ with infinitely many elements, contradicting (G3).
\end{proof}

\begin{corollary}
$(S_\infty, <_\text{Bruhat})$ is not an $\nn$-graded poset.
\end{corollary}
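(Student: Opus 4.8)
The plan is to locate the precise axiom that fails and produce an explicit witness. First I would observe that axiom (G1) does hold: the identity $e$ satisfies $e \leq_\text{Bruhat} \sigma$ for every $\sigma \in S_\infty$ (immediate from Proposition \ref{prop: region criterion for bruhat order}, since $r_{a,b}(e) = \min(a,b) \geq r_{a,b}(\sigma)$), so $\hat 0 = e$ is the unique minimal element. Hence if $(S_\infty, <_\text{Bruhat})$ fails to be $\nn$-graded, the failure must occur in (G2) or (G3), and I would show it already fails (G2).

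For the witness I would take any $\omega \in S_\infty \smallsetminus S_\infty^f$ — for concreteness $\omega = \theta = [2,1,4,3,6,5,\ldots]$ from Example \ref{ex: two infinite permutations}, which moves infinitely many points. Then $e <_\text{Bruhat} \omega$ (they are distinct and $e$ is the minimum), while $e \not\approx \omega$ since $e$ fixes everything. By Proposition \ref{prop: when a discrete-saturated chain is finite}, for $\sigma <_\text{Bruhat} \omega$ the existence of a finite saturated chain from $\sigma$ to $\omega$ is equivalent to $\sigma \approx \omega$; applied to $\sigma = e$, this shows there is no finite saturated chain from $\hat 0 = e$ to $\omega$, so (G2) fails and $(S_\infty, <_\text{Bruhat})$ is not $\nn$-graded. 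Alternatively — and I would likely phrase the final proof this way to connect the corollary with the preceding lemma — one argues by contradiction: if $S_\infty$ were $\nn$-graded, then Propositions \ref{prop: existence of a discrete-saturated chain} and \ref{prop: when a discrete-saturated chain is finite} produce an infinite discrete-saturated chain $C : e = \sigma_0 \lessdot_\text{Bruhat} \sigma_1 \lessdot_\text{Bruhat} \cdots$ with every $\sigma_i <_\text{Bruhat} \omega$ (converging to $\omega$); since $C$ contains $\hat 0$, Lemma \ref{lem: discrete-saturated chains are maximal in graded posets} forces $C$ to be maximal, yet $C \cup \{\omega\}$ is a strictly larger chain, a contradiction.

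There is essentially no obstacle: all the substantive work was done in Sections \ref{sec: discrete-saturated Chains in the Bruhat Order} and \ref{sec: An N Grading of S infty}, and the proof is a one-line application of Proposition \ref{prop: when a discrete-saturated chain is finite} (together with Lemma \ref{lem: discrete-saturated chains are maximal in graded posets} in the contradiction version). The only points that deserve an explicit mention are that $e$ is the bottom element and that $S_\infty \smallsetminus S_\infty^f \neq \emptyset$, both of which are immediate.
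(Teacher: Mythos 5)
Your proposal is correct, and both of your routes are legitimate; they differ from the paper in interesting ways.

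Your direct argument (showing (G2) fails) is genuinely more elementary than the paper's. You observe that $e$ is the minimum of $(S_\infty,<_\text{Bruhat})$, pick any $\omega \in S_\infty \smallsetminus S_\infty^f$, and invoke Proposition~\ref{prop: when a discrete-saturated chain is finite} to conclude that no finite saturated chain from $e = \hat{0}$ to $\omega$ exists, so (G2) fails outright. This bypasses Lemma~\ref{lem: discrete-saturated chains are maximal in graded posets} entirely and pinpoints the exact axiom responsible, whereas the paper's proof derives a contradiction that ultimately traces through (G3) via that lemma.

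Your contradiction version is essentially the paper's argument, but it is actually tighter: the paper constructs the infinite discrete-saturated chain starting from an arbitrary $\sigma$ with $\sigma \not\approx \omega$ and then cites Lemma~\ref{lem: discrete-saturated chains are maximal in graded posets}, yet that lemma only applies to chains that \emph{contain} $\hat{0}$. If $\sigma \neq e$ one must first prepend a finite saturated chain from $\hat{0}$ to $\sigma$ (which exists by (G2) under the supposition that $S_\infty$ is $\nn$-graded), a step the paper leaves implicit. By choosing $\sigma = e = \hat{0}$ from the start, you apply the lemma directly with no gap to fill. In short: your first route is a cleaner diagnosis, and your second route is the paper's route done more carefully.
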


\begin{proof}
Let $\sigma , \omega \in S_\infty$ be such that $\sigma <_\text{Bruhat} \omega$ and $\sigma \not\approx \omega$. By Propositions \ref{prop: existence of a discrete-saturated chain} and \ref{prop: when a discrete-saturated chain is finite} there exists an infinite discrete-saturated chain $\sigma = \sigma_0 \lessdot_\text{Bruhat} \sigma_1 \lessdot_\text{Bruhat} \ldots \leq_\text{Bruhat} \omega$ in $S_\infty$ converging to $\omega$. However by Corollary \ref{cor: no maximal elements} there exists some transposition $(p , q)$ such that $\omega \lessdot_\text{Bruhat} \omega \circ (p , q)$, so in particular our chain is not maximal. Thus the desired statement follows from Lemma \ref{lem: discrete-saturated chains are maximal in graded posets}. 
\end{proof} 

\begin{remark}
    Even though the poset $[\sigma , \omega]^f$ is nicer than $S_\infty$, in the sense that it is $\nn$-graded, it still does not have a maximal element, as the following corollary shows.
\end{remark}

\begin{corollary}\label{cor: no max element for intervals either}
Given $\sigma , \omega \in S_\infty$ with $\sigma <_\text{Bruhat} \omega$ and $\sigma \not\approx \omega$, the poset $([\sigma , \omega]^f, <_\text{Bruhat})$ does not have a maximal element. 
\end{corollary}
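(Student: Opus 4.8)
The plan is to show directly that no element of $[\sigma , \omega]^f$ is maximal, by producing, for each such element, a strictly larger one that still lies in $[\sigma , \omega]^f$. Fix an arbitrary $\nu \in [\sigma , \omega]^f$. The first step is to observe that $\nu \not\approx \omega$: since $\nu \approx \sigma$ by definition of $[\sigma , \omega]^f$ and $\approx$ is an equivalence relation, were $\nu \approx \omega$ we would get $\sigma \approx \omega$, contradicting the hypothesis $\sigma \not\approx \omega$. In particular $\nu \neq \omega$, so $\nu \leq_\text{Bruhat} \omega$ upgrades to $\nu <_\text{Bruhat} \omega$. This is the one place where the hypothesis $\sigma \not\approx \omega$ is genuinely used, so I would make sure to isolate it.

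Next I would invoke Lemma \ref{lem: d m is a relative candidate} with the pair $\nu <_\text{Bruhat} \omega$ in place of $\sigma <_\text{Bruhat} \omega$: it gives that $(d(\nu , \omega) , m(\nu , \omega))$ is a relative candidate for $\nu , \omega$, i.e., writing $\nu' = \nu \circ (d(\nu , \omega) , m(\nu , \omega))$, we have $\nu \lessdot_\text{Bruhat} \nu' \leq_\text{Bruhat} \omega$. Since $\nu'$ is obtained from $\nu$ by composing with a single transposition, it agrees with $\nu$ outside two positions, so $\nu' \approx \nu$, and hence $\nu' \approx \sigma$. Combining $\sigma \leq_\text{Bruhat} \nu <_\text{Bruhat} \nu' \leq_\text{Bruhat} \omega$ with $\nu' \approx \sigma$ shows $\nu' \in [\sigma , \omega]^f$; since $\nu' >_\text{Bruhat} \nu$, the element $\nu$ is not maximal in $[\sigma , \omega]^f$. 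As $\nu$ was arbitrary, $[\sigma , \omega]^f$ has no maximal element.

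I do not expect a real obstacle here: all the work has been done in Lemma \ref{lem: d m is a relative candidate} (existence of a relative candidate, hence an upward cover inside the interval) and in the routine bookkeeping with $\approx$. An alternative, slightly less direct route would be to apply Proposition \ref{prop: existence of a discrete-saturated chain} to $\nu <_\text{Bruhat} \omega$ to obtain a discrete-saturated chain starting at $\nu$ — which by Proposition \ref{prop: when a discrete-saturated chain is finite} must be infinite since $\nu \not\approx \omega$ — and then take its first cover $\nu \lessdot_\text{Bruhat} \nu_1$, checking $\nu_1 \in [\sigma , \omega]^f$ exactly as above; but the route through Lemma \ref{lem: d m is a relative candidate} is shorter since it avoids any appeal to the convergence statements.
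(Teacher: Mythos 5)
Your proposal is correct, and the key non-trivial step — observing that $\nu \approx \sigma$ and $\sigma \not\approx \omega$ force $\nu \not\approx \omega$, hence $\nu <_\text{Bruhat} \omega$ strictly — is identical to the paper's opening move. Where you diverge is in how you then produce a strictly larger element inside the interval: you appeal directly to Lemma \ref{lem: d m is a relative candidate} to obtain a single cover $\nu \lessdot_\text{Bruhat} \nu' = \nu \circ (d(\nu,\omega), m(\nu,\omega)) \leq_\text{Bruhat} \omega$, and note $\nu' \approx \nu$ because a transposition changes only two values, so $\nu' \in [\sigma,\omega]^f$. The paper instead invokes the heavier machinery of Propositions \ref{prop: existence of a discrete-saturated chain} and \ref{prop: when a discrete-saturated chain is finite} to produce an entire infinite discrete-saturated chain $\nu = \nu_0 \lessdot_\text{Bruhat} \nu_1 \lessdot_\text{Bruhat} \ldots$ in $[\nu,\omega]^f$, and then discards all but the fact that $\nu_1$ exists. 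Your primary route is more economical — it uses only the existence of one relative candidate, not the full chain-existence and finiteness results — and you correctly identify the paper's route as the ``slightly less direct'' alternative in your last paragraph. Since the chain propositions were already proved and used elsewhere, the paper's choice costs nothing in practice, but your argument is the more self-contained one and isolates exactly which ingredient is needed.
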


\begin{proof}
Given any $\nu \in [\sigma , \omega]^f$ note that we cannot have $\nu \approx \omega$ since we know that $\nu \approx \sigma$ and $\sigma \not\approx \omega$ and ``$\approx$'' is an equivalence relation. Thus by Propositions \ref{prop: existence of a discrete-saturated chain} and \ref{prop: when a discrete-saturated chain is finite} there exists an infinite discrete-saturated chain $\nu = \nu_0 \lessdot_\text{Bruhat} \nu_1 \lessdot_\text{Bruhat} \ldots$ in $[\nu , \omega]^f \subseteq [\sigma , \omega]^f$, and in particular $\nu$ is not maximal. 
\end{proof}


\section{Lexicographically Nested-Shellable Intervals of $S_\infty$}\label{sec: Lexicographically Nested-Shellable Intervals of S infty}

This section is dedicated to proving Theorem \ref{thm: main thm Bruhat order} which says that $[\sigma , \omega]^f$ is a lexicographically nested-shellable poset. We begin with a lemma which will imply (LNS2).

\begin{lemma}\label{lem: element always less than some nu n}
Given $\sigma , \omega \in S_\infty$ with $\sigma <_\text{Bruhat} \omega$ and $\sigma \not\approx \omega$, and a discrete-saturated chain $C: \sigma = \sigma_0 \lessdot_\text{Bruhat} \sigma_1 \lessdot_\text{Bruhat} \ldots \leq_\text{Bruhat} \omega$ in $[\sigma , \omega]^f$ which converges to $\omega$, if $\nu \in [\sigma , \omega]^f$, then there exists $n \in \nn$ such that $\nu \leq_\text{Bruhat} \sigma_n$. 
\end{lemma}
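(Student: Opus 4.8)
The plan is to verify $\nu \leq_\text{Bruhat} \sigma_n$ directly from the defining subset condition for the infinite Bruhat order, namely that $\nu \leq_\text{Bruhat} \sigma_n$ iff $\{\nu(1),\ldots,\nu(m)\} \leq \{\sigma_n(1),\ldots,\sigma_n(m)\}$ for every $m \in \nn$. The point is that this infinite list of inequalities splits into two ranges: for large $m$ it is automatic (independent of $n$) because $\nu \approx \sigma$, and for the finitely many small $m$ it is handled by the convergence of $C$ to $\omega$ together with $\nu \leq_\text{Bruhat} \omega$.

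First I would fix $i_0 \in \nn$ with $\nu(i) = \sigma(i)$ for all $i \geq i_0$, which exists since $\nu \in [\sigma,\omega]^f$ implies $\nu \approx \sigma$. Since $\nu$ and $\sigma$ are bijections agreeing on $\{i_0, i_0+1, \ldots\}$, they have the same image on that set, so $\{\nu(1),\ldots,\nu(i_0-1)\} = \{\sigma(1),\ldots,\sigma(i_0-1)\}$ as subsets of $\nn$; combined with $\nu(i) = \sigma(i)$ for $i_0 \leq i \leq m$ this gives $\{\nu(1),\ldots,\nu(m)\} = \{\sigma(1),\ldots,\sigma(m)\}$ for every $m \geq i_0-1$. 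For such $m$, since $\sigma = \sigma_0 \lessdot_\text{Bruhat} \cdots \lessdot_\text{Bruhat} \sigma_n$ gives $\sigma \leq_\text{Bruhat} \sigma_n$, we get $\{\nu(1),\ldots,\nu(m)\} = \{\sigma(1),\ldots,\sigma(m)\} \leq \{\sigma_n(1),\ldots,\sigma_n(m)\}$ for every $n$. It remains to control the (finitely many, possibly zero) indices $1 \leq m \leq i_0 - 2$: because $C$ converges to $\omega$, for each such $m$ there is $N_m$ with $\sigma_n(m) = \omega(m)$ for $n \geq N_m$, and setting $N = \max\{N_1,\ldots,N_{i_0-2}\}$ we get, for all $n \geq N$ and all $m \leq i_0-2$, that $\{\sigma_n(1),\ldots,\sigma_n(m)\} = \{\omega(1),\ldots,\omega(m)\}$. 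Then $\nu \leq_\text{Bruhat} \omega$ yields $\{\nu(1),\ldots,\nu(m)\} \leq \{\omega(1),\ldots,\omega(m)\} = \{\sigma_n(1),\ldots,\sigma_n(m)\}$, so taking $n = N$ completes the verification that $\nu \leq_\text{Bruhat} \sigma_n$.

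I do not expect a genuine obstacle here; the only subtlety worth flagging is that one must not try to make $\sigma_n$ agree with $\omega$ on all of $\nn$ at once (impossible, since $\sigma \not\approx \omega$ forces $\omega \notin [\sigma,\omega]^f$), but only on the fixed finite initial segment $\{1,\ldots,i_0-2\}$, and simultaneously observe that agreement on the complementary tail is already built into $\nu \approx \sigma$. An alternative but essentially identical route would phrase everything via the rank numbers $r_{a,b}$ of Proposition \ref{prop: region criterion for bruhat order}, using that $r_{a,b}(\sigma_n)$ depends only on $\sigma_n(1),\ldots,\sigma_n(b)$; the subset formulation above seems cleanest.
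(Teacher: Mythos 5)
Your proof is correct and takes essentially the same approach as the paper: split the tableau-criterion inequalities at $i_0$, use $\nu \approx \sigma$ (together with $\sigma \leq_\text{Bruhat} \sigma_n$) for the tail, and use convergence of $C$ to $\omega$ (together with $\nu \leq_\text{Bruhat} \omega$) for the finite initial segment. Your version is in fact slightly more explicit than the paper's in spelling out why $\{\nu(1),\ldots,\nu(m)\}=\{\sigma(1),\ldots,\sigma(m)\}$ for large $m$, via the observation that bijections agreeing on a cofinite tail have equal images on the complementary finite set.
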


\begin{proof}
    If $\nu \in [\sigma , \omega]^f$, since $\nu \approx \sigma$, there exists some $i_0 \in \nn$ such that ($\ast$) $\sigma(i) = \nu(i)$ for all $i \geq i_0$. Furthermore, since $\sigma_0 , \sigma_1, \ldots$ converges to $\omega$, there is some $N \in \nn$ such that if $n \geq N$ then ($\ast \ast$) $\sigma_n(i) =  \omega(i)$ for all $i \leq i_0$. We shall appeal to the tableau criterion of the Bruhat order to show that $\nu \leq_\text{Bruhat} \sigma_n$. Indeed if $j \leq i_0$, then $\{ \sigma_n(1) , \ldots , \sigma_n(j) \} = \{ \omega(1) , \ldots, \omega(j) \} \geq \{ \nu(1) , \ldots , \nu(j) \}$ where the equality holds by ($\ast \ast$) and the inequality holds because $\nu \leq_\text{Bruhat} \omega$. On the other hand, if $j \geq i_0$, then $\{ \nu(1), \ldots, \nu(j) \} = \{ \sigma(1), \ldots, \sigma(j) \} \leq \{\sigma_n(1) , \ldots, \sigma_n(j) \}$, where the inequality holds because $\sigma \leq_\text{Bruhat} \sigma_n$ and the equality holds because by ($\ast$), $\nu$ and $\sigma$ agree past $i_0$. So indeed the tableau criterion shows that $\nu \leq_\text{Bruhat} \sigma_n$.
\end{proof}

Now we introduce the relevant edge-labeling of $(S_\infty , <_\text{Bruhat})$. Let $T_\infty$ denote the set of transpositions in $S_\infty$ and recall that every such transposition can be written uniquely in cycle notation $(p , q)$ where $p < q$. Like Edelman in \cite{edelman1981bruhat} we consider the \emph{lexicographic order} on $T_\infty$, and denote it by $<_\text{lex}$. Explicitly, if $(p , q)$ and $(p' , q')$ are two transpositions (with $p < q$ and $p' < q'$) then $(p , q) <_\text{lex} (p', q')$ if and only if either $p < p'$ or $p = p'$ and $q < q'$. Notice that $<_\text{lex}$ is a total order (with order-type the ordinal $\omega^2$). Define an edge-labeling $t_\text{lex} : C(S_\infty , <_\text{Bruhat}) \to (T_\infty , <_\text{lex})$ as follows. If $\sigma \lessdot_\text{Bruhat} \nu$, then by Corollary \ref{cor: cover relations in S infinity are given by transpositions} there exists a unique transposition $(p , q) \in T_\infty$ such that $\sigma \circ (p , q) = \nu$. We define $t_\text{lex}(\sigma , \nu) = (p , q)$. 

\begin{remark}
    As in $S_n$, it is also true that for any cover relation $\sigma \lessdot_\text{Bruhat} \nu$ in $S_\infty$ there exists a unique transposition $(p', q')$ such that $(p' , q') \circ \sigma = \nu$, so we could (as Edelman does) define the edge labeling using this convention. However we have chosen our definition of $t_\text{lex}$ because the proofs in our infinite setting are somewhat easier in this case. 
\end{remark}

\begin{remark}
    The remaining proofs are very similar in spirit to those Edelman uses in \cite{edelman1981bruhat} to prove that $S_n$ is lexicographically shellable. We repeat them here to emphasize that they still work in $S_\infty$ with the combinatorial machinery we have built up. 
\end{remark}

In order to show that $t_\text{lex}$ is an $L$-labeling, we begin with a lemma about how repeatedly applying transpositions of the form $(d , m)$ behaves with respect to the lex order on transpositions.

\begin{lemma}\label{lem: d m increasing in lex ordering}
    Let $\sigma , \omega \in s_\infty$ be such that $\sigma <_\text{Bruhat} \omega$. Define $\sigma_1 := \sigma \circ (d(\sigma , \omega) , m(\sigma , \omega))$. Then $(d(\sigma, \omega), m(\sigma,\omega)) <_\text{lex} (d(\sigma_1, \omega), m(\sigma_1,\omega))$ 
\end{lemma}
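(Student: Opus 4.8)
The plan is to read off the one-line notation of $\sigma_1$ around position $d := d(\sigma,\omega)$ and then compare the relevant invariants of the pairs $(\sigma,\omega)$ and $(\sigma_1,\omega)$. Throughout, abbreviate $d = d(\sigma,\omega)$, $f = f(\sigma,\omega)$, $m = m(\sigma,\omega)$, and write $d_1, f_1, m_1$ for the corresponding quantities of the pair $(\sigma_1,\omega)$. These are well-defined: by Lemma~\ref{lem: d m is a relative candidate} we have $\sigma_1 \leq_\text{Bruhat} \omega$, and we may assume $\sigma_1 \neq \omega$ (otherwise the right-hand side of the asserted inequality is not even defined), so $\sigma_1 <_\text{Bruhat} \omega$ and the discussion following Lemma~\ref{lem: less than bruhat implies values at d less than} applies. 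Recall the properties we will use: $\sigma(\ell) = \omega(\ell)$ for all $\ell < d$; $d < m \leq f$; $\sigma(f) = \omega(d)$ and $\sigma(d) < \sigma(m) \leq \sigma(f)$; and minimality of $m$, i.e. every $\ell$ with $d < \ell < m$ has $\sigma(\ell) < \sigma(d)$ or $\sigma(\ell) > \sigma(f)$. Since $\sigma_1 = \sigma \circ (d,m)$ we have $\sigma_1(d) = \sigma(m)$, $\sigma_1(m) = \sigma(d)$, and $\sigma_1(\ell) = \sigma(\ell)$ for every other $\ell$; in particular $\sigma_1(\ell) = \omega(\ell)$ for $\ell < d$.

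I would then split on whether $m = f$ or $m < f$. If $m = f$, then $\sigma_1(d) = \sigma(f) = \omega(d)$, so $\sigma_1$ agrees with $\omega$ on all of $\{1, \ldots, d\}$; since $\sigma_1 \neq \omega$ this forces $d_1 > d$, and then $(d, m) <_\text{lex} (d_1, m_1)$ follows at once from the definition of $<_\text{lex}$, whatever $m_1$ is.

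Now suppose $m < f$. Since $\sigma$ is injective, $\sigma(m) < \sigma(f)$, so $\sigma_1(d) = \sigma(m) \neq \omega(d) = \sigma(f)$, meaning $\sigma_1$ and $\omega$ already differ at $d$; hence $d_1 = d$. Moreover $d$, $m$, $f$ are now pairwise distinct, so $\sigma_1(f) = \sigma(f) = \omega(d) = \omega(d_1)$, giving $f_1 = \sigma_1^{-1}(\omega(d_1)) = f$. It remains to show $m_1 > m$. By definition $m_1$ is the least $\ell$ with $d < \ell \leq f$ and $\sigma(m) = \sigma_1(d) < \sigma_1(\ell) \leq \sigma_1(f) = \sigma(f)$. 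For $\ell = m$ we have $\sigma_1(m) = \sigma(d) < \sigma(m)$, so $\ell = m$ fails. For $d < \ell < m$ we have $\sigma_1(\ell) = \sigma(\ell)$, and minimality of $m$ gives either $\sigma(\ell) < \sigma(d) < \sigma(m)$, violating the lower bound, or $\sigma(\ell) > \sigma(f)$, violating the upper bound. Hence no $\ell \leq m$ qualifies, so $m_1 > m$; combined with $d_1 = d$ this yields $(d, m) <_\text{lex} (d_1, m_1)$.

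I do not expect a genuine obstacle: the argument just tracks how the transposition $(d,m)$ rearranges one-line notation. The only care needed is to confirm the substitution $\sigma_1(\ell) = \sigma(\ell)$ is valid for the relevant indices — which is exactly why the case $m < f$ (where $d$, $m$, $f$ are distinct) must be separated from $m = f$ — and to set aside the degenerate possibility $\sigma_1 = \omega$. No combinatorial input beyond the definitions of $d$, $f$, $m$ and Lemmas~\ref{lem: less than bruhat implies values at d less than} and~\ref{lem: d m is a relative candidate} is required.
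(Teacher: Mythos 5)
Your proof is correct and uses the same essential ingredients as the paper's (the definitions of $d$, $m$, $f$, minimality of $m$, and Lemmas~\ref{lem: less than bruhat implies values at d less than} and~\ref{lem: d m is a relative candidate}), but it is organized more cleanly. The paper establishes $d \leq d_1$, dispenses with $d < d_1$ trivially, and then in the case $d_1 = d$ assumes $m_1 \leq m$ and derives a contradiction in two sub-cases ($m_1 = m$ and $m_1 < m$) via chains of inequalities. You instead observe up front that the dichotomy $m = f$ versus $m < f$ is \emph{exactly} the dichotomy $d_1 > d$ versus $d_1 = d$, and in the latter case you prove $m_1 > m$ directly by checking that no $\ell \in (d, m]$ can satisfy the defining condition of $m_1$ (using $\sigma_1(m) = \sigma(d) < \sigma(m) = \sigma_1(d)$ for $\ell = m$, and minimality of $m$ for $\ell < m$). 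This is a bit sharper: the identification $f_1 = f$ and the role of the minimality of $m$ appear transparently as the substance of the argument, rather than emerging inside a proof by contradiction. You also explicitly flag the degenerate possibility $\sigma_1 = \omega$, where the right-hand side is undefined, which the paper leaves implicit (its only use is in Corollary~\ref{cor: existence of an increasing discrete-saturated chain}, where $\sigma_1 \neq \omega$ is built in). No gap.
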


\begin{proof}
First of all, we have that $\sigma_1(n) = \sigma(n) = \omega(n)$ for all $n < d(\sigma , \omega)$, hence $d(\sigma , \omega) \leq d(\sigma_1 , \omega)$. If $d(\sigma , \omega) < d(\sigma_1 , \omega)$, then we certainly have $(d(\sigma, \omega), m(\sigma,\omega)) <_\text{lex} (d(\sigma_1, \omega), m(\sigma_1,\omega))$, so suppose that $d(\sigma , \omega) = d(\sigma_1 , \omega)$ and suppose for contradiction that $m( \sigma , \omega ) \geq m( \sigma_1 , \omega)$. First of all, if we have $m( \sigma , \omega ) = m( \sigma_1 , \omega)$, then we compute: 
\begin{equation*}
    \sigma_1(d(\sigma_1 , \omega)) \overset{(1)}{=} \sigma_1(d(\sigma, \omega)) \overset{(2)}{=} \sigma(m(\sigma, \omega)) \overset{(3)}{>} \sigma(d(\sigma , \omega)) \overset{(4)}{=} \sigma_1(m(\sigma, \omega)) \overset{(5)}{=} \sigma_1(m(\sigma_1 , \omega))
\end{equation*}

Here (1) follows because $d(\sigma_1 , \omega) = d(\sigma, \omega)$ by assumption, (2) and (4) follow from the definition of $\sigma_1$, (3) follows from the definition of $d$ and $m$, and (5) follows because $m(\sigma_1 , \omega) = m(\sigma, \omega)$. However this contradicts that $\sigma_1(d(\sigma_1 , \omega) ) < \sigma_1(m(\sigma_1 , \omega))$, which again follows from the definition of $d$ and $m$. 

Now suppose $m(\sigma_1 , \omega) < m(\sigma , \omega)$. We compute:

\begin{align*}
    \sigma(d(\sigma , \omega)) \overset{(1)}{<} \sigma(m(\sigma , \omega)) 
    \overset{(2)}{=} \sigma_1(d(\sigma , \omega)) 
    \overset{(3)}{=} \sigma_1(d(\sigma_1 , \omega)) 
\overset{(4)}{<} \sigma_1(m(\sigma_1 , \omega)) 
\overset{(5)}{<} \sigma_1(f(\sigma_1 , \omega)) 
&\overset{(6)}{=} \sigma(f(\sigma_1 , \omega)) 
\\&\overset{(7)}{=} \sigma(f(\sigma , \omega))
\end{align*}

Here (1), (4), and (5) follow from the definition of $d$, $m$, and $f$, (2) follows from the definition of $\sigma_1$, (3) follows because $d(\sigma_1 , \omega) = d(\sigma, \omega)$ by assumption, and (6) and (7) follow because $\sigma$ and $\sigma_1$ only differ on $d(\sigma , \omega)$ and $m(\sigma , \omega)$ and since we assumed that $d(\sigma , \omega) = d(\sigma_1 , \omega)$, it must be that $m(\sigma , \omega) \neq f(\sigma , \omega)$ and therefore $f(\sigma , \omega) = f(\sigma_1 , \omega)$ and that $\sigma(f(\sigma_1 , \omega)) = \sigma_1(f(\sigma_1 , \omega))$. All together this shows that $\sigma(d(\sigma , \omega)) < \sigma_1(m(\sigma_1 , \omega)) < \sigma(f(\sigma , \omega))$, however again since $\sigma$ and $\sigma_1$ only differ on $d(\sigma , \omega)$ and $m(\sigma , \omega)$ and $d(\sigma , \omega) = d(\sigma_1 , \omega)  < m(\sigma_1 , \omega) < m(\sigma , \omega)$ by assumption, we have $\sigma(m(\sigma_1 , \omega)) = \sigma_1(m(\sigma_1 , \omega))$, and hence we obtain $\sigma(d(\sigma , \omega)) < \sigma(m(\sigma_1 , \omega)) < \sigma(f(\sigma , \omega))$, which contradicts the minimality of $m(\sigma , \omega)$. 
\end{proof}

As a corollary, it follows immediately that the discrete-saturated chains of Proposition \ref{prop: existence of a discrete-saturated chain} obtained by repeatedly applying transpositions of the form $(d , m)$ are lexicographically increasing. 

\begin{corollary}\label{cor: existence of an increasing discrete-saturated chain}
    Let $\sigma , \omega \in S_\infty$ be such that $\sigma <_\text{Bruhat} \omega$. The Jordan-H\"{o}lder sequence of the (finite or infinite) discrete-saturated chain $\sigma  = \sigma_0 \lessdot_\text{Bruhat} \sigma_1 \lessdot_\text{Bruhat} \ldots \leq_{\text{Bruhat}} \omega$ defined by $\sigma_i = \sigma_{i - 1} \circ (d(\sigma_{i - 1} , \omega) , m(\sigma_{i - 1} , \omega) )$ (which is guaranteed to exist by Proposition \ref{prop: existence of a discrete-saturated chain}) is $t_\text{lex}$-increasing. 
\end{corollary}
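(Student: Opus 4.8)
The plan is to observe that this corollary is an essentially immediate consequence of Lemma \ref{lem: d m increasing in lex ordering} once the definition of the edge-labeling $t_\text{lex}$ is unwound. First I would recall that for each index $i \geq 1$ at which $\sigma_i$ is defined, the cover relation $\sigma_{i-1} \lessdot_\text{Bruhat} \sigma_i$ is witnessed by the transposition $(d(\sigma_{i-1}, \omega), m(\sigma_{i-1}, \omega))$, since by construction $\sigma_i = \sigma_{i-1} \circ (d(\sigma_{i-1}, \omega), m(\sigma_{i-1}, \omega))$. By the uniqueness clause of Corollary \ref{cor: cover relations in S infinity are given by transpositions}, this is the unique transposition defining $t_\text{lex}(\sigma_{i-1}, \sigma_i)$, so that $t_\text{lex}(\sigma_{i-1}, \sigma_i) = (d(\sigma_{i-1}, \omega), m(\sigma_{i-1}, \omega))$. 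Hence the Jordan-H\"{o}lder sequence $J_{t_\text{lex}}(C)$ of the chain $C$ is precisely the list $\left( (d(\sigma_0, \omega), m(\sigma_0, \omega)), (d(\sigma_1, \omega), m(\sigma_1, \omega)), \ldots \right)$.

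Next I would check that Lemma \ref{lem: d m increasing in lex ordering} applies to each consecutive pair. At any step $i$ of the recursion we have $\sigma_{i-1} \neq \omega$ (otherwise $\sigma_i$ would not be defined) and $\sigma_{i-1} \leq_\text{Bruhat} \omega$ by Proposition \ref{prop: existence of a discrete-saturated chain}, so in fact $\sigma_{i-1} <_\text{Bruhat} \omega$. Applying Lemma \ref{lem: d m increasing in lex ordering} with the roles of $\sigma$ and $\sigma_1$ there played by $\sigma_{i-1}$ and $\sigma_i = \sigma_{i-1} \circ (d(\sigma_{i-1}, \omega), m(\sigma_{i-1}, \omega))$ respectively yields
\[
(d(\sigma_{i-1}, \omega), m(\sigma_{i-1}, \omega)) <_\text{lex} (d(\sigma_i, \omega), m(\sigma_i, \omega)).
\]
Since this holds for every $i$ for which $\sigma_i$ is defined, the list $J_{t_\text{lex}}(C)$ is strictly increasing in $<_\text{lex}$; that is, $C$ is $t_\text{lex}$-increasing, which is exactly the assertion.

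There is no genuine obstacle here: all of the combinatorial content has been front-loaded into Lemma \ref{lem: d m increasing in lex ordering}, and the corollary is merely a translation between the statement ``the cover from $\sigma_{i-1}$ to $\sigma_i$ carries the label $(d(\sigma_{i-1},\omega), m(\sigma_{i-1},\omega))$'' and the statement ``the Jordan-H\"{o}lder sequence increases.'' The only point requiring any care is confirming that the hypothesis $\sigma_{i-1} <_\text{Bruhat} \omega$ of the lemma is met at every step of the (possibly infinite) recursion, which, as noted, is guaranteed by Proposition \ref{prop: existence of a discrete-saturated chain}.
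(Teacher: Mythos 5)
Your proposal is correct and takes essentially the same route as the paper, which omits a proof and merely remarks that the corollary follows immediately from Lemma \ref{lem: d m increasing in lex ordering}; you fill in precisely the bookkeeping the paper leaves implicit (identifying the edge labels via Corollary \ref{cor: cover relations in S infinity are given by transpositions} and checking the lemma's hypothesis is met at each stage). One micro-nitpick on phrasing: for the displayed inequality to make sense you need $\sigma_i \neq \omega$ (so that $d(\sigma_i,\omega)$ and $m(\sigma_i,\omega)$ exist), not merely $\sigma_{i-1} \neq \omega$; but this is exactly the condition under which the Jordan-H\"{o}lder sequence has an $(i+1)$st entry to compare against, so the argument is unaffected.
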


Before we come to our main result, we prove one final necessary lemma about how the relative candidate $(d, m)$ of $\sigma , \omega$ compares to other relative candidates in the lexicographic ordering. 

\begin{lemma}\label{lem: d , m is lex first relative candidate}
    Suppose $\sigma, \omega \in S_\infty$ are such that $\sigma <_\text{Bruhat} \omega$. Then $(d(\sigma , \omega) , m(\sigma , \omega))$ is the lexicographically first relative candidate for $\sigma , \omega$. 
\end{lemma}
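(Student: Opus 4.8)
The plan is to show that \emph{every} relative candidate $(p,q)$ for $\sigma,\omega$ satisfies $(d(\sigma,\omega),m(\sigma,\omega)) \leq_\text{lex} (p,q)$; since $(d(\sigma,\omega),m(\sigma,\omega))$ is itself a relative candidate by Lemma \ref{lem: d m is a relative candidate}, this is exactly the assertion. Abbreviate $d = d(\sigma,\omega)$, $m = m(\sigma,\omega)$, $f = f(\sigma,\omega)$, and recall that $\sigma(f) = \omega(d)$ by definition of $f$ and that $\sigma(d) < \omega(d)$ by Lemma \ref{lem: less than bruhat implies values at d less than}. By the definition of $<_\text{lex}$ it suffices to prove two things: first that $p \geq d$, and second that if $p = d$ then $m \leq q$.

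For the first step I would argue by contradiction: suppose $p < d$. Then $\sigma(j) = \omega(j)$ for all $j \leq p$, so columns $1,\ldots,p$ of the permutation matrices of $\sigma$ and $\omega$ coincide, and hence $r_{a , p}(\sigma) = r_{a , p}(\omega)$ for every $a \in \nn$. But condition (C3) of Proposition \ref{prop: equivalent characterization of relative candidate}, applied with $b = p$ and $a = \sigma(p)$ — which is legitimate since $p \leq p < q$ and $\sigma(p) \leq \sigma(p) < \sigma(q)$ by (C1) — forces the \emph{strict} inequality $r_{\sigma(p) , p}(\sigma) > r_{\sigma(p) , p}(\omega)$, a contradiction. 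Hence $p \geq d$. If $p > d$ we are done, so assume $p = d$ from now on.

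For the second step, suppose toward a contradiction that $q < m$. Since $d = p < q$ we have $d < q < m$, so by the minimality of $m$ (recalled just before the statement of Lemma \ref{lem: d m is a relative candidate}) either $\sigma(q) < \sigma(d)$ or $\sigma(q) > \sigma(f) = \omega(d)$; the first alternative contradicts (C1), so $\sigma(q) > \omega(d)$. Now apply (C3) with $b = d$ and $a = \omega(d)$: this is valid because $d \leq d < q$ and, using $\sigma(d) < \omega(d)$ together with $\omega(d) < \sigma(q)$, we have $\sigma(d) \leq \omega(d) < \sigma(q)$. Thus (C3) gives $r_{\omega(d) , d}(\sigma) > r_{\omega(d) , d}(\omega)$. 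On the other hand a direct count shows that both of these ranks equal $1 + \#\{ j < d \mid \sigma(j) < \omega(d) \}$: for $\sigma$, column $d$ contributes $1$ since $\sigma(d) < \omega(d)$, and a column $j < d$ contributes exactly when $\sigma(j) < \omega(d)$ (the case $\sigma(j) = \omega(d) = \sigma(f)$ is impossible since $f > d$); for $\omega$, column $d$ contributes $1$ since $\omega(d) \leq \omega(d)$, and columns $j < d$ agree with those of $\sigma$. This contradiction shows $q \geq m$, i.e. $(d , m) \leq_\text{lex} (d , q) = (p , q)$, completing the plan.

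The only real obstacle is the rank computation in the last paragraph, and it is essentially bookkeeping: one must make sure that no value equal to $\omega(d)$ occupies one of the columns $\leq d$ under $\sigma$, which is precisely what the identity $\sigma(f) = \omega(d)$ with $f > d$ guarantees.
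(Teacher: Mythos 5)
Your proof is correct, and it takes a genuinely different route from the paper's. The paper argues in both steps by applying Lemma~\ref{lem: less than bruhat implies values at d less than} to the pair $\sigma\circ(p,q)<_\text{Bruhat}\omega$: in each contradiction case it identifies $d(\sigma\circ(p,q),\omega)=p$ and shows $(\sigma\circ(p,q))(p)=\sigma(q)>\omega(p)$, which that lemma forbids. You instead work entirely with condition (C3) of Proposition~\ref{prop: equivalent characterization of relative candidate}: in each case you pick a box $(a,b)$ lying in the strict-inequality region for $(p,q)$ and compute directly that $r_{a,b}(\sigma)=r_{a,b}(\omega)$, contradicting the strictness. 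Both mechanisms express the same underlying obstruction (applying a ``too early'' transposition would overshoot $\omega$), but the paper's version is shorter because it delegates the bookkeeping to an already-proved lemma, whereas yours does the rank count by hand, paying special attention to the fact that $\sigma^{-1}(\omega(d))=f>d$ rules out a boundary term. One small advantage of your route is that it never needs to worry about whether $\sigma\circ(p,q)=\omega$ (which would make $d(\sigma\circ(p,q),\omega)$ undefined in the paper's argument; harmless there, but a case one has to notice), since you only ever compare $\sigma$ and $\omega$ directly.
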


\begin{proof}
    Let $d = d(\sigma , \omega)$, $m = m(\sigma , \omega)$, and $f = f(\sigma , \omega)$ and suppose $(p , q)$ is a relative candidate for $\sigma , \omega$ which is not equal to $(d , m)$. Suppose for contradiction that $p < d$. Then by definition of $d$, we have $\sigma(p) = \omega(p)$, but by (C1) we have that $\sigma(p) < \sigma(q)$, hence $\omega(p) < \sigma(q) = (\sigma \circ (p , q))(p)$. However we have that $(\sigma \circ (p , q))(n) = \omega(n)$ for all $n < p$. Thus $d(\sigma \circ (p , q) , \omega) = p$ but by Lemma \ref{lem: less than bruhat implies values at d less than} this contradicts that $\sigma \circ (p , q) <_\text{Bruhat} \omega$. Thus we must have $d \leq p$. If $d < p$ then $(d , m) <_\text{lex} (p , q)$ and we are done. So suppose that $d = p$ and again for contradiction suppose $q < m$. Then in particular we have $d = p < q < m < f$, so by definition of $d$ and $f$ and by minimality of $m$, it must be that $\sigma(q) < \sigma(d) = \sigma(p)$ or $\sigma(q) > \sigma(f)$, however $(p , q)$ is a relative candidate for $\sigma, \omega$ so by (C1) we have $\sigma(p) < \sigma(q)$, hence it must be that $\sigma(q) > \sigma(f)$. But then $(\sigma \circ (p , q))(p) = \sigma(q) > \sigma(f) = \omega(p)$. Since $(\sigma \circ (p , q))(n) = \omega(n)$ for all $n < p$, again by Lemma \ref{lem: less than bruhat implies values at d less than}, this contradicts that $\sigma \circ (p , q) <_\text{Bruhat} \omega$. Hence if $d = p$ then we must have $m \leq q$, but if $m = q$ then $(d , m) = (p , q)$, so we must have $m < q$ and hence $(d , m) <_\text{lex} (p , q)$ as desired. 
\end{proof}

We can now prove our main result.

\begin{proof}[Proof of Theorem \ref{thm: main thm Bruhat order}]
By Corollary \ref{cor: delta f in S infinity is interval-finite}, $[\sigma , \omega]^f$ is interval-finite, and by Proposition \ref{prop: bruhat interval is graded} it is $\nn$-graded. By Proposition \ref{prop: existence of a discrete-saturated chain} there exists a discrete-saturated chain $\sigma = \sigma_0 \lessdot_\text{Bruhat} \sigma_1 \lessdot_\text{Bruhat} \ldots \leq_\text{Bruhat} \omega$ which is either equal to $\omega$ or converges to $\omega$. In either case, Lemma \ref{lem: element always less than some nu n} implies that (LNS2) holds. 

To show (LNS1), let $\mu , \nu \in [\sigma , \omega]^f$ be such that $\mu <_\text{Bruhat} \nu$. We must show that $t_\text{lex}$ is an $L$-labeling for $[ \mu , \nu]$. First note that by Corollary \ref{cor: existence of an increasing discrete-saturated chain} there exists a finite saturated chain $C: \mu = \mu_0 \lessdot_\text{Bruhat} \mu_1 \lessdot_\text{Bruhat} \ldots \lessdot_\text{Bruhat} \mu_n = \nu$ whose Jordan-H\"{o}lder sequence is $t_\text{lex}$-increasing (so the existence part of (L1) is satisfied), and furthermore at each stage we have $\mu_{i + 1} = \mu_i \circ (d(\mu_i , \nu), m(\mu_i , \nu))$. Suppose that $D: \mu_0 \lessdot_\text{Bruhat} \mu_1' \lessdot_\text{Bruhat} \ldots \lessdot_\text{Bruhat} \mu_n = \nu$ is another discrete-saturated chain in $[\mu , \nu]$ which is not equal to $C$. Then there exists some minimal $i$ such that $\mu_i = \mu_i'$ but $\mu_{i + 1} \neq \mu_{i + 1}'$. Let $(p , q) = t_\text{lex}(\mu_i , \mu_{i + 1}')$. Then both $(d (\mu_i , \nu) , m(\mu_i , \nu))$ and $(p , q)$ are relative candidates for $\mu_i , \nu$, which are not equal, since $\mu_{i + 1} \neq \mu_{i + 1}'$. So by Lemma \ref{lem: d , m is lex first relative candidate} we must have $(d (\mu_i , \nu), m(\mu_i , \nu))  <_\text{lex} (p , q)$, which implies that the Jordan-H\"{o}lder sequence of $C$ precedes that of $D$ in the lexicographic ordering, thus (L2) is satisfied.  

Now we must show the uniqueness part of (L1). Suppose that the chain $D$ above is also $t_\text{lex}$-increasing. Since $(d , m) := (d (\mu_i , \nu), m(\mu_i , \nu))  <_\text{lex} (p , q)$, either $d < p$ or $d = p$ and $m < q$. If the former case holds, then since $D$ is $t_\text{lex}$-increasing, every transposition $(p' , q')$ appearing in the Jordan-H\"{o}lder sequence of $D$ after $(p , q)$ satisfies $p' \geq p$, so for all $i \leq j \leq n$ we have $\mu_j(d) = \mu_i(d) \neq \nu(d)$. In particular taking $j = n$ we obtain a contradiction. Now suppose the latter case holds. We consider two sub-cases. On the one hand, if $q > f := f(\mu_i , \nu)$, then since every transposition $(p' , q')$ appearing in the Jordan-H\"{o}lder sequence of $D$ after $(p , q)$ is lexicographically larger than $(p , q)$, it follows for all $i \leq j \leq n$ that $\mu_j(d) \neq \mu_i(f) = \nu(d)$ and in particular taking $j = n$ yields a contradiction. 

On the other hand, if $q \leq f$, then since $(p , q)$ is a relative candidate for $\mu_i , \nu$, we must have $\mu_i(p) < \mu_i(q)$, and that either $\mu_i(m) < \mu_i(p) = \mu_i(d)$ or $\mu_i(m) > \mu_i(q)$. However we know that $(d , m)$ is a relative candidate for $\mu_i , \nu$, so the first inequality cannot happen, and we must have $\mu_i(m) > \mu_i(q)$. Notice that because $\mu_n = \nu$, and the transpositions in the Jordan-H\"{o}lder sequence of $D$ increase in the lexicographic ordering, the transposition $(d , f)$ must appear at some point in this sequence in order to obtain a permutation which satisfies $\nu(d) = \mu_i(f)$. Hence part of the Jordan-H\"{o}lder sequence of $D$ contains the transpositions $(p , q) = (p_i , q_i) <_\text{lex} (p_{i + 1}, q_{i + 1}) <_\text{lex} \ldots <_\text{lex} (p_\ell , q_\ell) = (d , f)$. Since $p = d$, we must have $p_j = d$ for all $j$, and $m < q < q_i < \ldots < q_\ell = f$. We now show that $\mu_j(q_j) < \mu_j(m)$ for all $i \leq j \leq \ell$ by induction on $j$. Indeed, we already showed the base case, so suppose it holds for some $j$. Then $(d , q_{j + 1})$ is a relative candidate for $\mu_{j + 1} , \nu$, thus we must have $\mu_{j + 1}(d) < \mu_{j + 1}(q_{j + 1})$ and either $\mu_{j + 1}(m) < \mu_{j + 1}(d)$ or $\mu_{j + 1}(m) > \mu_{j + 1}(q_{j + 1})$. However $\mu_{j + 1}(d) = \mu_j(q_j)$, and $\mu_j(q_j) < \mu_j(m) = \mu_{j + 1}(m)$ by the induction hypothesis, so we must have $\mu_{j + 1}(m) > \mu_{j + 1}(q_{j + 1})$ as desired. Therefore in particular we have $\mu_i(f) = \mu_\ell(f) = \mu_\ell(q_\ell) < \mu_\ell(m) = \mu_i(m)$, which contradicts the definition of $m$ and $f$. Therefore $C$ is the unique $t_\text{lex}$-increasing discrete-saturated chain in $[\mu , \nu]$, so (L1) is satisfied as well. 
\end{proof}



\bibliographystyle{alpha}

\bibliography{references}

\end{document}